\numberwithin{equation}{section}
\numberwithin{figure}{section}
\theoremstyle{plain}
\newtheorem{thm}{\protect\theoremname}[section]
  \theoremstyle{definition}
  \newtheorem{defn}[thm]{\protect\definitionname}
  \theoremstyle{plain}
  \newtheorem{lem}[thm]{\protect\lemmaname}
  \theoremstyle{plain}
  \newtheorem{prop}[thm]{\protect\propositionname}
  \theoremstyle{plain}
  \newtheorem{cor}[thm]{\protect\corollaryname}
  \theoremstyle{definition}
  \newtheorem{example}[thm]{\protect\examplename}
  \theoremstyle{remark}
  \newtheorem{rem}[thm]{\protect\remarkname}
  \providecommand{\corollaryname}{Corollary}
  \providecommand{\definitionname}{Definition}
  \providecommand{\examplename}{Example}
  \providecommand{\lemmaname}{Lemma}
  \providecommand{\propositionname}{Proposition}
  \providecommand{\remarkname}{Remark}
\providecommand{\theoremname}{Theorem}
\begin{document}

\begin{frontmatter}{}

\title{Nonstandard methods in large-scale topology\tnoteref{lic}}
\tnotetext[lic]{\copyright~2019. This manuscript version is made available under the \doclicenseLongNameRef.}
\author{Takuma Imamura}

\address{Research Institute for Mathematical Sciences\\
Kyoto University\\
Kitashirakawa Oiwake-cho, Sakyo-ku, Kyoto 606-8502, Japan}

\ead{timamura@kurims.kyoto-u.ac.jp}
\begin{abstract}
We develop some nonstandard techniques for bornological and coarse
spaces. We first generalise the notion of bornology to prebornology,
which better fits to coarse spaces. We then give nonstandard characterisations
of some basic large-scale notions in terms of galaxies and finite
closeness relations, concepts that have been developed for metric
spaces. Some hybrid notions that involve both small-scale and large-scale
are also discussed. Finally we illustrate an application of our nonstandard
characterisations to prove some elementary facts in large-scale topology
and functional analysis, e.g., the fact that the class of Higson functions
forms a $C^{\ast}$-algebra.
\end{abstract}
\begin{keyword}
bornological space\sep prebornological space\sep coarse space\sep
nonstandard analysis\MSC[2010] 54J05 \sep 46A08 \sep 53C23
\end{keyword}

\end{frontmatter}{}

\section{Introduction}

Small-scale topology is the study of small-scale (fine) structures
of various spaces such as topological spaces and uniform spaces. In
contrast, large-scale topology is the study of the large-scale (coarse)
structures of various spaces such as bornological spaces and coarse
spaces (see Bourbaki \cite{Bou07} and Hogbe-Nlend \cite{HN77} for
bornologies and Roe \cite{Roe93,Roe03} for coarse structures). Nonstandard
analysis was developed by Robinson \cite{Rob66} in the early 1960s.
Since then, it has been successfully applied to various areas of mathematics.
While most research focuses on small-scale concepts (e.g. \cite{KK94,Rob66,SL76}),
little effort has been devoted to a systematic study of large-scale
concepts. An exception is Khalfallah and Kosarew \cite{KK16} which
includes an abstract study of some large-scale concepts via nonstandard
analysis. Other applications can be found in van den Dries and Wilkie
\cite{DW84}, where they uses nonstandard analysis to construct a
special metric space nowadays called the asymptotic cone.

The aim of this paper is to further develop nonstandard treatments
of bornological and coarse spaces. In \prettyref{sec:Bornology} we
discuss the nonstandard treatment of bornological spaces in more detail.
In order to clarify the connection between bornology and coarse structure,
we introduce a (standard) notion of prebornology, a generalisation
of bornology, and then deal with it nonstandardly. In \prettyref{sec:Coarse-structure},
we extend nonstandard methods to the study of coarse spaces. For these
purposes, we start with generalising two nonstandard notions in metric
spaces, galaxy and finite closeness, to prebornological and coarse
spaces. We provide nonstandard characterisations for some large-scale
notions such as bornological maps, proper maps and bornologous maps.
We also discuss some hybrid notions that involve both small-scale
and large-scale, such as local compactness and the slow oscillation
property. We illustrate an application of our nonstandard characterisations
to prove some elementary facts in large-scale topology and functional
analysis, e.g., the fact that the class of Higson functions forms
a $C^{\ast}$-algebra.

\subsection{Basic setting and notation}

We refer to \cite{HN77} for bornology, \cite{Roe03} for coarse topology
and \cite{KK94,Rob66,SL76} for nonstandard analysis and topology.

We work within the Robinson-style framework of nonstandard analysis,
although our methods can be transferred to any other frameworks of
nonstandard analysis. We fix a transitive universe $\mathbb{U}$,
called the standard universe, that satisfies sufficiently many (though
finitely many) axioms of ZFC and contains all standard objects we
consider. We also fix a $\left|\mathbb{U}\right|^{+}$-saturated enlargement
$\ast\colon\mathbb{U}\hookrightarrow\prescript{\ast}{}{\mathbb{U}}$.
The term ``transfer'' refers to the elementary extension property,
``weak saturation'' to the enlargement property, and ``saturation''
to the saturation property.

Let $\left(X,\mathcal{T}_{X}\right)$ be a topological space. The
\emph{monad} of $x\in X$ is the set $\mu_{X}\left(x\right)=\bigcap\set{\prescript{\ast}{}{U}|x\in U\in\mathcal{T}_{X}}$.
The elements of $\mathrm{NS}\left(X\right)=\bigcup_{x\in X}\mu_{X}\left(x\right)$
are called \emph{nearstandard points}. The nonstandard points that
are not nearstandard are called \emph{remote points}. Next, let $\left(Y,\mathcal{U}_{Y}\right)$
be a uniform space. We say that two points $x,y\in\prescript{\ast}{}{Y}$
are \emph{infinitely close} (and write $x\approx_{Y}y$) if for each
$U\in\mathcal{U}_{Y}$ we have that $\left(x,y\right)\in\prescript{\ast}{}{U}$.
Equivalently, the \emph{infinite closeness relation} $\approx_{Y}$
is a binary relation on $\prescript{\ast}{}{Y}$ defined as the intersection
$\bigcap_{U\in\mathcal{U}_{Y}}\prescript{\ast}{}{U}$.

\section{\label{sec:Bornology}Bornology}

In this section, we discuss the nonstandard treatment of (pre)bornological
spaces. We provide nonstandard characterisations for some large-scale
notions concerning bornology. Using these characterisations, we prove
some well-known facts in large-scale topology and functional analysis.
The interaction between topology and bornology is also discussed.

\subsection{Bornologies and galaxy maps}
\begin{defn}[Standard]
A \emph{prebornology} on a set $X$ is a nonempty subset $\mathcal{B}_{X}$
of $\mathcal{P}\left(X\right)$ such that
\begin{enumerate}
\item $\mathcal{B}_{X}$ is a cover of $X$: $\bigcup\mathcal{B}_{X}=X$;
\item $\mathcal{B}_{X}$ is downward closed: $B\in\mathcal{B}_{X}$ and
$C\subseteq B$ implies $C\in\mathcal{B}_{X}$;
\item $\mathcal{B}_{X}$ is closed under finite \emph{non-disjoint} union:
$B,C\in\mathcal{B}_{X}$ and $B\cap C\neq\varnothing$ implies $B\cup C\in\mathcal{B}_{X}$.
\end{enumerate}
The pair $\left(X,\mathcal{B}_{X}\right)$ is called a \emph{prebornological
space}, and the elements of $\mathcal{B}_{X}$ are called \emph{bounded
sets} of $X$. For $x\in X$ we denote $\mathcal{BN}_{X}\left(x\right)=\set{B\in\mathcal{B}_{X}|x\in B}$.
\end{defn}
For instance, the collection of all bounded subsets of a metric space
in the usual sense is a prebornology. More examples are found in \prettyref{subsec:Examples-of-bornology}.

The notion of prebornology is a slight generalisation of bornology.
Recall the definition of the latter.
\begin{defn}[Standard]
A \emph{bornology} on a set $X$ is a cover of $X$ which is downward
closed and is closed under finite \emph{possibly disjoint} union.
A set with a bornology is called a \emph{bornological space}.
\end{defn}
A problem of this definition is that the boundedness induced by a
coarse structure may not be a bornology, while it is a prebornology.
Because of this, prebornology is more suitable than bornology when
considering coarse structures (see also \prettyref{rem:Adjoint}).

It is well-known that topology has a pointwise definition involving
neighbourhood systems. Similarly, prebornology also has a pointwise
definition via bornological neighbourhood systems.
\begin{lem}[Standard]
\label{lem:local-def-of-bornology}Let $X$ be a set. For each $x\in X$,
let $\mathcal{BN}\left(x\right)$ be a nonempty subset of $\mathcal{P}\left(X\right)$
that satisfies the following axioms:
\begin{enumerate}
\item[(BN1)]  if $B\in\mathcal{BN}\left(x\right)$, then $x\in B$;
\item[(BN2)] if $x\in A\subseteq B\in\mathcal{BN}\left(x\right)$, then $A\in\mathcal{BN}\left(x\right)$;
\item[(BN3)] if $A,B\in\mathcal{BN}\left(x\right)$, then $A\cup B\in\mathcal{BN}\left(x\right)$;
\item[(BN4)] if $B\in\mathcal{BN}\left(x\right)$, then $B\in\mathcal{BN}\left(y\right)$
for each $y\in B$.
\end{enumerate}
Then, there is a unique prebornology on $X$ such that $\mathcal{BN}_{X}$
coincides with the given $\mathcal{BN}$. Conversely, given a prebornology
on $X$, its $\mathcal{BN}_{X}$ satisfies (BN1) to (BN4).

\end{lem}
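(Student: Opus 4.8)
The statement has two directions. The plan is to treat the easy (converse) direction first, then build the prebornology from the given neighbourhood data and verify uniqueness.

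For the converse, suppose $\mathcal{B}_{X}$ is a prebornology and set $\mathcal{BN}_{X}(x)=\set{B\in\mathcal{B}_{X}|x\in B}$. Then (BN1) is immediate from the definition. For (BN2), if $x\in A\subseteq B\in\mathcal{BN}_{X}(x)$ then $A\subseteq B\in\mathcal{B}_{X}$, so $A\in\mathcal{B}_{X}$ by downward closure, and $x\in A$, hence $A\in\mathcal{BN}_{X}(x)$. For (BN3), if $A,B\in\mathcal{BN}_{X}(x)$ then $x\in A\cap B$, so $A\cap B\neq\varnothing$ and closure under non-disjoint union gives $A\cup B\in\mathcal{B}_{X}$; since $x\in A\cup B$ we are done. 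For (BN4), if $B\in\mathcal{BN}_{X}(x)$ and $y\in B$, then $B\in\mathcal{B}_{X}$ and $y\in B$, so $B\in\mathcal{BN}_{X}(y)$. This direction is purely routine.

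For the main direction, given the family $\set{\mathcal{BN}(x)}_{x\in X}$, I would define $\mathcal{B}_{X}=\bigcup_{x\in X}\mathcal{BN}(x)$ (equivalently, $B\in\mathcal{B}_{X}$ iff $B\in\mathcal{BN}(x)$ for some, hence by (BN4) for every, $x\in B$). Then I must check the three prebornology axioms. Cover: each $\mathcal{BN}(x)$ is nonempty, say $B\in\mathcal{BN}(x)$, and (BN1) gives $x\in B\in\mathcal{B}_{X}$, so $\bigcup\mathcal{B}_{X}=X$. Downward closure: if $B\in\mathcal{B}_{X}$ and $C\subseteq B$; if $C=\varnothing$ then pick any $x$ with $B\in\mathcal{BN}(x)$... actually here one must be slightly careful, since (BN2) only lets us shrink $B$ to sets still containing $x$; the clean route is to note $C\subseteq B$ with $C$ nonempty, pick $y\in C\subseteq B$, use (BN4) to get $B\in\mathcal{BN}(y)$, then (BN2) with $y\in C\subseteq B$ gives $C\in\mathcal{BN}(y)\subseteq\mathcal{B}_{X}$; the empty set is handled by observing $\varnothing\subseteq B$ for the same $B$ and invoking (BN2) vacuously, or simply by checking that $\varnothing$ lies in every $\mathcal{BN}(x)$ once (BN2) is applied — I expect this empty-set bookkeeping to be the one genuinely fiddly point. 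Non-disjoint union: if $B,C\in\mathcal{B}_{X}$ and $z\in B\cap C$, then (BN4) gives $B,C\in\mathcal{BN}(z)$, and (BN3) gives $B\cup C\in\mathcal{BN}(z)\subseteq\mathcal{B}_{X}$.

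Then I verify $\mathcal{BN}_{X}=\mathcal{BN}$: if $B\in\mathcal{BN}(x)$ then $x\in B$ by (BN1) and $B\in\mathcal{B}_{X}$ by construction, so $B\in\mathcal{BN}_{X}(x)$; conversely if $B\in\mathcal{BN}_{X}(x)$ then $x\in B$ and $B\in\mathcal{B}_{X}$, so $B\in\mathcal{BN}(y)$ for some $y\in B$, whence (BN4) gives $B\in\mathcal{BN}(x)$. Finally, uniqueness: if $\mathcal{B}'$ is any prebornology with $\mathcal{BN}_{X}'=\mathcal{BN}$, then $B\in\mathcal{B}'$ iff $B\in\mathcal{BN}_{X}'(x)=\mathcal{BN}(x)$ for some $x\in B$ (the case $B=\varnothing$ being covered by downward closure on both sides), which pins $\mathcal{B}'$ down to the $\mathcal{B}_{X}$ constructed above. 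The only mild obstacle is the consistent treatment of the empty set across the axioms; everything else is a direct unwinding of definitions.
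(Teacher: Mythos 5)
Your overall construction matches the paper's: form $\mathcal{B}_X$ from the union of the $\mathcal{BN}(x)$'s, check the three prebornology axioms (cover via nonemptiness and (BN1), downward closure via (BN4) then (BN2), non-disjoint union via (BN4) then (BN3)), and then read off $\mathcal{BN}_X = \mathcal{BN}$. You also supply the routine converse and a uniqueness argument that the paper leaves implicit, which is welcome.

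However, your handling of the empty set is genuinely wrong, not merely fiddly. By (BN1), every $B\in\mathcal{BN}(x)$ satisfies $x\in B$, so $\varnothing\notin\mathcal{BN}(x)$ for any $x$, and hence $\varnothing\notin\bigcup_{x\in X}\mathcal{BN}(x)$. Your suggested repair via (BN2) does not work: (BN2) has hypothesis ``$x\in A\subseteq B$'', and with $A=\varnothing$ that hypothesis is false, so (BN2) yields nothing (it does not make $\varnothing$ a member of $\mathcal{BN}(x)$, it simply does not apply). Consequently, when $X\neq\varnothing$ your $\mathcal{B}_X=\bigcup_{x\in X}\mathcal{BN}(x)$ contains some nonempty $B$ but omits $\varnothing\subseteq B$, so it fails downward closure and is not a prebornology. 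The fix is the one the paper uses: define $\mathcal{B}_X=\bigcup_{x\in X}\mathcal{BN}(x)\cup\{\varnothing\}$ from the outset. With $\varnothing$ thrown in explicitly, downward closure holds (the nonempty case is exactly your argument via (BN4) and (BN2), and the empty case is now built in), the other axioms are unaffected since $\varnothing$ never meets the nondisjointness hypothesis, and since $\varnothing$ contains no points it does not perturb $\mathcal{BN}_X(x)$. Your uniqueness argument then goes through as written, since any prebornology is forced to contain $\varnothing$ by downward closure.
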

\begin{proof}
We here only prove the former part. Let $\mathcal{B}_{X}=\bigcup_{x\in X}\mathcal{BN}\left(x\right)\cup\set{\varnothing}$.
First, $\mathcal{B}_{X}$ covers $X$ by non-emptiness of $\mathcal{BN}\left(x\right)$
and (BN1). The downward closedness of $\mathcal{B}_{X}$ follows from
(BN2). Now let $B,C\in\mathcal{B}_{X}$ and suppose that $x_{0}\in B\cap C\neq\varnothing$.
There are $y,z\in X$ such that $B\in\mathcal{BN}\left(y\right)$
and $C\in\mathcal{BN}\left(z\right)$. By (BN4), $B,C\in\mathcal{BN}\left(x_{0}\right)$.
By (BN3), $B\cup C\in\mathcal{BN}\left(x_{0}\right)$, so $B\cup C\in\mathcal{B}_{X}$.
Hence $\mathcal{B}_{X}$ is a prebornology on $X$. It is easy to
see that $\mathcal{BN}\left(x\right)=\set{B\in\mathcal{B}_{X}|x\in B}$.
\end{proof}
\begin{defn}
Let $X$ be a prebornological space. The \emph{galaxy} of $x\in X$
is defined as follows (Robinson \cite[p.101]{Rob66} for the metric
case):
\[
G_{X}\left(x\right)=\bigcup_{B\in\mathcal{BN}_{X}\left(x\right)}\prescript{\ast}{}{B}.
\]
\end{defn}
The prebornology can be recovered from the galaxy map $G_{X}\colon X\to\mathcal{P}\left(\prescript{\ast}{}{X}\right)$.
To see this, we will use the following lemma, a slight generalisation
of \cite[Lemma 4.4]{KK16} to prebornology.
\begin{lem}[Bornological Approximation Lemma]
\label{lem:bornological-approximation-lemma}Let $X$ be a prebornological
space. For each $x\in X$, there exists a $B\in\prescript{\ast}{}{\mathcal{B}_{X}}$
such that $G_{X}\left(x\right)\subseteq B$.
\end{lem}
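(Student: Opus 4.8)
The plan is to combine saturation with the key structural observation that, for a fixed $x\in X$, the family $\mathcal{BN}_{X}\left(x\right)$ is directed under union. First I would record this directedness: if $B,C\in\mathcal{BN}_{X}\left(x\right)$, then $x\in B\cap C$, so $B\cap C\neq\varnothing$, and the non-disjoint union axiom of prebornology gives $B\cup C\in\mathcal{B}_{X}$; since $x\in B\cup C$, in fact $B\cup C\in\mathcal{BN}_{X}\left(x\right)$. By an easy induction, the union of any finite subfamily of $\mathcal{BN}_{X}\left(x\right)$ again lies in $\mathcal{BN}_{X}\left(x\right)$. (This is precisely the place where prebornology --- as opposed to a mere cover of $X$ closed under subsets --- is used.)

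Next I would set up the saturation argument. For each $C\in\mathcal{BN}_{X}\left(x\right)$ put
\[
\mathcal{I}_{C}=\set{D\in\prescript{\ast}{}{\mathcal{B}_{X}}|\prescript{\ast}{}{C}\subseteq D}.
\]
Since $\prescript{\ast}{}{C}$ and $\prescript{\ast}{}{\mathcal{B}_{X}}$ are internal, each $\mathcal{I}_{C}$ is internal. I claim the family $\set{\mathcal{I}_{C}|C\in\mathcal{BN}_{X}\left(x\right)}$ has the finite intersection property: given $C_{1},\dots,C_{n}\in\mathcal{BN}_{X}\left(x\right)$, set $C=C_{1}\cup\dots\cup C_{n}$; by the first step $C\in\mathcal{BN}_{X}\left(x\right)\subseteq\mathcal{B}_{X}$, so by transfer $\prescript{\ast}{}{C}\in\prescript{\ast}{}{\mathcal{B}_{X}}$, while the fact that $\ast$ commutes with finite unions gives $\prescript{\ast}{}{C_{i}}\subseteq\prescript{\ast}{}{C}$ for each $i$; hence $\prescript{\ast}{}{C}\in\mathcal{I}_{C_{1}}\cap\dots\cap\mathcal{I}_{C_{n}}$.

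Then I would invoke saturation. The index set $\mathcal{BN}_{X}\left(x\right)$ is a subset of $\mathcal{P}\left(X\right)\in\mathbb{U}$, hence has cardinality at most $\left|\mathbb{U}\right|$, so $\left|\mathbb{U}\right|^{+}$-saturation of the enlargement applies to $\set{\mathcal{I}_{C}|C\in\mathcal{BN}_{X}\left(x\right)}$. This yields some $B\in\bigcap_{C\in\mathcal{BN}_{X}\left(x\right)}\mathcal{I}_{C}$, i.e.\ an internal bounded set $B\in\prescript{\ast}{}{\mathcal{B}_{X}}$ with $\prescript{\ast}{}{C}\subseteq B$ for every $C\in\mathcal{BN}_{X}\left(x\right)$. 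Taking the union over all such $C$ gives $G_{X}\left(x\right)=\bigcup_{C\in\mathcal{BN}_{X}\left(x\right)}\prescript{\ast}{}{C}\subseteq B$, which is what we want.

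The only real subtlety --- the ``main obstacle'', such as it is --- is verifying the finite intersection property, which rests entirely on the directedness established in the first step; without closure under non-disjoint unions the sets $\mathcal{I}_{C}$ could have empty finite intersections and the whole argument would fail. The remaining ingredients (internality of $\mathcal{I}_{C}$, the cardinality bound needed to apply saturation, and $\ast$ preserving finite unions) are entirely routine.
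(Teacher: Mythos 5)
Your proof is correct and takes essentially the same approach as the paper: both rest on the observation that $\mathcal{BN}_{X}\left(x\right)$ is closed under finite unions (directedness), and then invoke a saturation principle to produce a single internal bounded set containing every $\prescript{\ast}{}{C}$, whence $G_{X}\left(x\right)\subseteq B$. The only cosmetic difference is that the paper applies weak saturation (the enlargement property) to the concurrent inclusion relation on the standard family $\mathcal{BN}_{X}\left(x\right)$, yielding $B\in\prescript{\ast}{}{\mathcal{BN}_{X}\left(x\right)}$ directly, whereas you phrase the step via the finite-intersection-property form of $\left|\mathbb{U}\right|^{+}$-saturation applied to the internal sets $\mathcal{I}_{C}$ --- interchangeable formulations in this setting.
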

\begin{proof}
Since $\mathcal{BN}_{X}\left(x\right)$ is closed under finite union,
for every finite subset $\mathcal{A}$ of $\mathcal{BN}_{X}\left(x\right)$
there exists an $B'\in\mathcal{BN}_{X}\left(x\right)$ such that $C\subseteq B'$
for all $C\in\mathcal{A}$. By weak saturation, there exists an $B\in\prescript{\ast}{}{\mathcal{BN}_{X}\left(x\right)}$
such that $\prescript{\ast}{}{C}\subseteq B$ holds for all $B\in\mathcal{BN}_{X}\left(x\right)$.
Hence $G_{X}\left(x\right)\subseteq B$.
\end{proof}
\begin{prop}
\label{prop:nonst-characterisation-of-boundedness}Let $X$ be a prebornological
space and let $B$ be a subset of $X$. The following are equivalent:
\begin{enumerate}
\item $B$ is bounded;
\item $\prescript{\ast}{}{B}\subseteq G_{X}\left(x\right)$ for all $x\in B$;
\item $B=\varnothing$ or $\prescript{\ast}{}{B}\subseteq G_{X}\left(x\right)$
for some $x\in B$;
\item $X=\varnothing$ or $\prescript{\ast}{}{B}\subseteq G_{X}\left(x\right)$
for some $x\in X$.
\end{enumerate}
\end{prop}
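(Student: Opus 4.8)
The plan is to prove the four statements equivalent via the cycle $(1)\Rightarrow(2)\Rightarrow(3)\Rightarrow(4)\Rightarrow(1)$, isolating $(4)\Rightarrow(1)$ as the only implication with any content.

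The implication $(1)\Rightarrow(2)$ is immediate from the definition of the galaxy: if $B$ is bounded and $x\in B$, then $B\in\mathcal{BN}_X\left(x\right)$, so $\prescript{\ast}{}{B}\subseteq G_X\left(x\right)$; and if $B=\varnothing$ the universal clause is vacuous. Then $(2)\Rightarrow(3)$ is trivial, picking any $x\in B$ when $B\neq\varnothing$. For $(3)\Rightarrow(4)$ I distinguish cases: if $B\neq\varnothing$, the point witnessing $(3)$ lies in $B\subseteq X$, so $X\neq\varnothing$ and the second clause of $(4)$ holds; if $B=\varnothing$, then $\prescript{\ast}{}{B}=\varnothing\subseteq G_X\left(x\right)$ for any $x\in X$, so $(4)$ holds whether $X$ is empty or not. (I will recheck the bookkeeping of the empty-set clauses in each step, but each verification is routine.)

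The substantive step is $(4)\Rightarrow(1)$. If $X=\varnothing$, then $\mathcal{B}_X$ is a nonempty subfamily of $\mathcal{P}\left(\varnothing\right)=\set{\varnothing}$, hence equals $\set{\varnothing}$, so $B=\varnothing$ is bounded. Otherwise fix $x\in X$ with $\prescript{\ast}{}{B}\subseteq G_X\left(x\right)$. By the Bornological Approximation Lemma (\prettyref{lem:bornological-approximation-lemma}) there is $C\in\prescript{\ast}{}{\mathcal{B}_X}$ with $G_X\left(x\right)\subseteq C$, whence $\prescript{\ast}{}{B}\subseteq C$. Thus the internal sentence ``$\exists C'\in\prescript{\ast}{}{\mathcal{B}_X}\colon\prescript{\ast}{}{B}\subseteq C'$'' holds, and this is precisely the $\ast$-transform of the standard sentence ``$\exists C'\in\mathcal{B}_X\colon B\subseteq C'$'' with $B$ as a fixed standard parameter. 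By transfer the standard sentence holds, and downward closedness of $\mathcal{B}_X$ then yields $B\in\mathcal{B}_X$.

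I expect the \emph{main obstacle} to be organisational rather than mathematical: keeping the degenerate cases $B=\varnothing$ and $X=\varnothing$ consistent across all four implications, and being careful in the transfer step that $B$ is treated as a standard parameter, so that $\prescript{\ast}{}{B}$ (and not an arbitrary internal set) is what appears on the nonstandard side of the witnessed existential.
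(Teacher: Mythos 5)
Your proposal is correct and follows essentially the same route as the paper: the paper dismisses all implications except $(4)\Rightarrow(1)$ as trivial and handles that one exactly as you do, by invoking the Bornological Approximation Lemma (\prettyref{lem:bornological-approximation-lemma}) to find an internal $C\in\prescript{\ast}{}{\mathcal{B}_X}$ with $\prescript{\ast}{}{B}\subseteq C$ and then concluding $B\in\mathcal{B}_X$ by transfer. The only difference is cosmetic: you spell out the easy implications and the $X=\varnothing$ degenerate case, which the paper silently passes over (and which could also be dispatched by noting that $\varnothing\in\mathcal{B}_X$ always holds, since $\mathcal{B}_X$ is nonempty and downward closed).
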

\begin{proof}
The only nontrivial part is $\left(4\right)\Rightarrow\left(1\right)$.
Suppose that $\prescript{\ast}{}{B}\subseteq G\left(x\right)$ for
some $x\in X$. By \nameref{lem:bornological-approximation-lemma},
there exists a $C\in\prescript{\ast}{}{\mathcal{B}_{X}}$ such that
$G_{X}\left(x\right)\subseteq C$. Since $\prescript{\ast}{}{B}\subseteq G\left(x\right)\subseteq C$,
$B$ is bounded by transfer.
\end{proof}
\begin{defn}
A set $A$ is said to be \emph{galactic} if $A=\bigcup\set{\prescript{\ast}{}{B}|\prescript{\ast}{}{B}\subseteq A}$.
\end{defn}

\begin{prop}
\label{prop:properties-of-galaxy-map}Let $X$ be a prebornological
space. The map $G_{X}\colon X\to\mathcal{P}\left(\prescript{\ast}{}{X}\right)$
satisfies the following:
\begin{enumerate}
\item $G_{X}$ is pointwise galactic, that is, $G_{X}\left(x\right)$ is
galactic for any $x\in X$;
\item $x\in G_{X}\left(x\right)$ for any $x\in X$;
\item $G_{X}\left(x\right)\cap G_{X}\left(y\right)\neq\varnothing\iff G_{X}\left(x\right)=G_{X}\left(y\right)$
for any $x,y\in X$.
\end{enumerate}
\end{prop}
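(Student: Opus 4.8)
The plan is to verify the three items by reducing each to the corresponding axiom for $\mathcal{BN}_X$ together with the transfer principle. For item (1), fix $x\in X$ and let $\xi\in G_X(x)$; by definition $\xi\in\prescript{\ast}{}{B}$ for some $B\in\mathcal{BN}_X(x)$, and clearly $\prescript{\ast}{}{B}\subseteq G_X(x)$, so every point of $G_X(x)$ lies in some internal set of the form $\prescript{\ast}{}{B}$ contained in $G_X(x)$; the reverse inclusion is trivial, giving galacticity. Item (2) is immediate: by (BN1) every $B\in\mathcal{BN}_X(x)$ contains $x$, hence $x\in\prescript{\ast}{}{B}\subseteq G_X(x)$ (using that $\prescript{\ast}{}{B}\supseteq B$ for standard $B$, or more precisely that the standard point $x$ transfers into $\prescript{\ast}{}{B}$).

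The only item requiring real work is (3). The direction $G_X(x)=G_X(y)\Rightarrow G_X(x)\cap G_X(y)\neq\varnothing$ follows from (2), since the common set contains $x$. For the converse, suppose $\zeta\in G_X(x)\cap G_X(y)$. Then there are $B\in\mathcal{BN}_X(x)$ and $C\in\mathcal{BN}_X(y)$ with $\zeta\in\prescript{\ast}{}{B}\cap\prescript{\ast}{}{C}=\prescript{\ast}{}{(B\cap C)}$, so $B\cap C\neq\varnothing$; pick $z\in B\cap C$. By (BN4), $B\in\mathcal{BN}_X(z)$ and $C\in\mathcal{BN}_X(z)$, so by (BN3) $B\cup C\in\mathcal{BN}_X(z)$, and again by (BN4), $B\cup C\in\mathcal{BN}_X(x)\cap\mathcal{BN}_X(y)$. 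The idea is then to show $G_X(x)\subseteq G_X(y)$: given any $D\in\mathcal{BN}_X(x)$, we have $D\cap(B\cup C)\ni x$ is nonempty, so $D\cup B\cup C\in\mathcal{BN}_X(x)$ by (BN3); but this set meets $C$, hence (again picking a point in the intersection and using (BN4)--(BN3)) it lies in $\mathcal{BN}_X(y)$, whence $\prescript{\ast}{}{D}\subseteq\prescript{\ast}{}{(D\cup B\cup C)}\subseteq G_X(y)$. Taking the union over $D$ gives $G_X(x)\subseteq G_X(y)$, and by symmetry equality.

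The main obstacle is bookkeeping in item (3): one must chain the non-disjointness condition carefully, since (BN3) as stated for $\mathcal{BN}(x)$ is unconditional but the underlying prebornology only closes under \emph{non-disjoint} unions, so every application must be anchored at a genuine common point and then propagated via (BN4). I expect no difficulty from the nonstandard side --- only the elementary identity $\prescript{\ast}{}{B}\cap\prescript{\ast}{}{C}=\prescript{\ast}{}{(B\cap C)}$ and the fact that standard points of $B$ lie in $\prescript{\ast}{}{B}$ are used, both immediate by transfer.
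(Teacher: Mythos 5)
Your proof is correct and follows essentially the same route as the paper's: (1) and (2) are read off the definition, and the nontrivial direction of (3) is handled by fixing a common point $\zeta\in G_X(x)\cap G_X(y)$, using transfer to get a standard non-disjointness $B\cap C\neq\varnothing$, forming the bounded union $B\cup C$, and then absorbing an arbitrary $D\in\mathcal{BN}_X(x)$ into $D\cup B\cup C\in\mathcal{BN}_X(y)$ to conclude $G_X(x)\subseteq G_X(y)$ and symmetrically. The only cosmetic difference is that you route the bookkeeping through the axioms (BN1)--(BN4) of \prettyref{lem:local-def-of-bornology}, while the paper argues directly with boundedness and the closure under non-disjoint unions; the two are interchangeable.
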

\begin{proof}
(1) is clear by definition. (2) is immediate from the fact that $\mathcal{B}_{X}$
is a cover of $X$. To see (3), suppose that $G_{X}\left(x\right)\cap G_{X}\left(y\right)\neq\varnothing$.
Fix a point $z\in G_{X}\left(x\right)\cap G_{X}\left(y\right)$. There
are bounded sets $B_{x},B_{y}$ such that $z,x\in\prescript{\ast}{}{B_{x}}$
and $z,y\in\prescript{\ast}{}{B_{y}}$. Since $z\in\prescript{\ast}{}{B_{x}}\cap\prescript{\ast}{}{B_{y}}\neq\varnothing$,
we have $B_{x}\cap B_{y}\neq\varnothing$ by transfer. Hence $B_{x}\cup B_{y}$
is bounded. Let $t\in G_{X}\left(x\right)$. There exists a bounded
set $B_{t}$ such that $t,x\in\prescript{\ast}{}{B_{t}}$. Since $x\in B_{t}\cap\left(B_{x}\cup B_{y}\right)\neq\varnothing$,
$B_{t}\cup B_{x}\cup B_{y}$ is bounded. Hence $t\in\prescript{\ast}{}{\left(B_{t}\cup B_{x}\cup B_{y}\right)}\subseteq G_{X}\left(y\right)$.
Similarly, if $t\in G_{X}\left(y\right)$, then $t\in G_{X}\left(x\right)$.
The reverse direction is immediate from (2).
\end{proof}
\begin{thm}
\label{thm:external-def-of-prebornology}Let $X$ be any set. If a
map $G\colon X\to\mathcal{P}\left(\prescript{\ast}{}{X}\right)$ satisfies
(1) to (3) in \prettyref{prop:properties-of-galaxy-map}, then $X$
admits a unique prebornology whose galaxy map coincides with $G$.
\end{thm}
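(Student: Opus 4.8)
The plan is to make a set bounded exactly when its nonstandard extension is absorbed by some galaxy. Concretely I would put
\[
\mathcal{B}_{X}=\set{B\subseteq X|\prescript{\ast}{}{B}\subseteq G\left(x\right)\text{ for some }x\in X}\cup\set{\varnothing}
\]
and check the three prebornology axioms. Covering is immediate from property (2) of \prettyref{prop:properties-of-galaxy-map}, since $\prescript{\ast}{}{\set{x}}=\set{x}\subseteq G\left(x\right)$ places every singleton in $\mathcal{B}_{X}$, and downward closedness is clear because $C\subseteq B$ gives $\prescript{\ast}{}{C}\subseteq\prescript{\ast}{}{B}$. Equivalently, one can build the structure pointwise by setting $\mathcal{BN}\left(x\right)=\set{B|x\in B,\ \prescript{\ast}{}{B}\subseteq G\left(x\right)}$ and appealing to Lemma~\ref{lem:local-def-of-bornology}; the work is the same.

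The real content is closure under non-disjoint unions, and this is precisely where hypothesis (3) enters. Given $B,C\in\mathcal{B}_{X}$ with $\prescript{\ast}{}{B}\subseteq G\left(x\right)$, $\prescript{\ast}{}{C}\subseteq G\left(y\right)$ and some $w\in B\cap C$, I observe $w\in\prescript{\ast}{}{B}\cap\prescript{\ast}{}{C}\subseteq G\left(x\right)\cap G\left(y\right)$, so $G\left(x\right)=G\left(y\right)$ by (3), whence $\prescript{\ast}{}{\left(B\cup C\right)}=\prescript{\ast}{}{B}\cup\prescript{\ast}{}{C}\subseteq G\left(x\right)$ and $B\cup C\in\mathcal{B}_{X}$. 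In the pointwise formulation this is the verification of (BN4): if $x\in B$, $\prescript{\ast}{}{B}\subseteq G\left(x\right)$ and $y\in B$, then $y\in G\left(x\right)\cap G\left(y\right)$ by (2), so $G\left(x\right)=G\left(y\right)$ and $\prescript{\ast}{}{B}\subseteq G\left(y\right)$.

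Next I would identify the galaxy map $G_{X}$ of $\mathcal{B}_{X}$ with $G$. The inclusion $G_{X}\left(x\right)\subseteq G\left(x\right)$ is built into the definition: if $B$ is bounded and $x\in B$, then $\prescript{\ast}{}{B}\subseteq G\left(z\right)$ for some $z$, and since $x\in\prescript{\ast}{}{B}\subseteq G\left(z\right)$ while $x\in G\left(x\right)$, property (3) forces $G\left(z\right)=G\left(x\right)$, so $\prescript{\ast}{}{B}\subseteq G\left(x\right)$. For the reverse inclusion I use galacticity (1): given $t\in G\left(x\right)$ there is a standard $B$ with $t\in\prescript{\ast}{}{B}\subseteq G\left(x\right)$, and replacing $B$ by $B\cup\set{x}$---still contained in $G\left(x\right)$ by (2)---produces a bounded set containing $x$ with $t\in\prescript{\ast}{}{\left(B\cup\set{x}\right)}$, hence $t\in G_{X}\left(x\right)$. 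Uniqueness then follows from \prettyref{prop:nonst-characterisation-of-boundedness}: any prebornology with galaxy map $G$ has, by clause (4) of that proposition, precisely the bounded sets $B$ with $\prescript{\ast}{}{B}\subseteq G\left(x\right)$ for some $x\in X$ (only $\varnothing$ when $X=\varnothing$), which is exactly $\mathcal{B}_{X}$.

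I expect the non-disjoint-union closure to be the only genuine obstacle: it is the single point where the ``galactic partition'' content of (3) is turned into prebornology combinatorics, while the remaining items reduce to the facts that $\ast$ commutes with finite unions and preserves $\subseteq$. The minor technical points to keep in mind are including $\varnothing$ (and the degenerate case $X=\varnothing$) and padding sets with the basepoint $x$ so that they land in the right neighbourhood system.
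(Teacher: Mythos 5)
Your proof is correct and is essentially the same as the paper's: both declare a set bounded precisely when its nonstandard extension lies in some $G\left(x\right)$, both invoke hypothesis (3) at the single crucial point (closure under non-disjoint unions, equivalently (BN4)), and both recover $G$ from $G_{X}$ via galacticity together with the padding of $B$ by the basepoint $x$. The only differences are presentational — you verify the three global prebornology axioms directly rather than routing through \prettyref{lem:local-def-of-bornology}, and you spell out the uniqueness step via \prettyref{prop:nonst-characterisation-of-boundedness}, which the paper leaves implicit.
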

\begin{proof}
For $x\in X$, let $\mathcal{BN}\left(x\right)=\set{B\subseteq X|x\in B\text{ and }\prescript{\ast}{}{B}\subseteq G\left(x\right)}$.
To apply \prettyref{lem:local-def-of-bornology}, we show that $\mathcal{BN}$
satisfies the axioms (BN1) to (BN4). (BN1) follows from (2). (BN2)
and (BN3) are immediate from the definition of $\mathcal{BN}$ and
the transfer principle. To prove (BN4), let $B\in\mathcal{BN}\left(x\right)$
and $y\in B$. Then, $y\in\prescript{\ast}{}{B}\subseteq G\left(x\right)$.
On the other hand, $y\in G\left(y\right)$ by (2). Hence $G\left(x\right)\cap G\left(y\right)\neq\varnothing$.
By (3), $G\left(x\right)=G\left(y\right)$, so $B\in\mathcal{BN}\left(y\right)$.

Applying \prettyref{lem:local-def-of-bornology} we obtain a unique
prebornology on $X$ whose $\mathcal{BN}_{X}$ coincides with $\mathcal{BN}$.
Now, let $x\in X$. Since $G\left(x\right)$ is galactic, it follows
that
\begin{align*}
G\left(x\right) & =\bigcup\set{\prescript{\ast}{}{B}|\prescript{\ast}{}{B}\subseteq G\left(x\right)}\\
 & =\bigcup_{B\in\mathcal{BN}\left(x\right)}\prescript{\ast}{}{B}\\
 & =\bigcup_{B\in\mathcal{BN}_{X}\left(x\right)}\prescript{\ast}{}{B}\\
 & =G_{X}\left(x\right).
\end{align*}
\end{proof}
Hence the correspondence $\mathcal{B}_{X}\leftrightarrow G_{X}$ gives
a bijection between the prebornologies on $X$ and the maps $X\to\mathcal{P}\left(\prescript{\ast}{}{X}\right)$
satisfying (1) to (3). 
\begin{defn}[Standard]
A prebornological space $X$ is \emph{(bornologically) connected}
if every finite subset of $X$ is bounded.
\end{defn}
A bornological space is precisely a connected prebornological space
in our terminology.
\begin{prop}
\label{prop:characterisation-of-bornological-connectedness}Let $X$
be a prebornological space. The following are equivalent:
\begin{enumerate}
\item $X$ is connected;
\item $X$ is a bornological space;
\item $G_{X}\left(x\right)\supseteq X$ for all $x\in X$;
\item $G_{X}\left(x\right)=G_{X}\left(y\right)$ for all $x,y\in X$.
\end{enumerate}
\end{prop}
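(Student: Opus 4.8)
The plan is to prove the chain $(1)\Rightarrow(3)\Rightarrow(4)\Rightarrow(1)$ and, separately, the equivalence $(1)\Leftrightarrow(2)$; the latter is essentially the remark already made above that a bornological space is the same thing as a connected prebornological space, so I would only need to spell it out. Indeed, a bornology is a fortiori a prebornology, and since it is downward closed and covers $X$ every singleton — hence every finite subset, being a finite union of singletons — is bounded, which gives $(2)\Rightarrow(1)$. Conversely, if $\mathcal{B}_{X}$ is connected and $B,C\in\mathcal{B}_{X}$, choose $b\in B$ and $c\in C$ (the case where one of them is empty being trivial); then $\set{b,c}$ is bounded, so $B\cup\set{b,c}$ is a non-disjoint union of bounded sets and thus bounded, and likewise $\left(B\cup\set{b,c}\right)\cup C=B\cup C$ is bounded, so $\mathcal{B}_{X}$ is closed under arbitrary finite unions, i.e. it is a bornology.

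For $(1)\Rightarrow(3)$, fix $x\in X$; for any $y\in X$ the set $\set{x,y}$ is bounded by connectedness and contains $x$, hence $\set{x,y}\in\mathcal{BN}_{X}\left(x\right)$, and since $\set{x,y}$ is a standard finite set we get $y\in\set{x,y}=\prescript{\ast}{}{\set{x,y}}\subseteq G_{X}\left(x\right)$; thus $X\subseteq G_{X}\left(x\right)$. For $(3)\Rightarrow(4)$, given $x,y\in X$ we have $y\in X\subseteq G_{X}\left(x\right)$ by hypothesis and $y\in G_{X}\left(y\right)$ by \prettyref{prop:properties-of-galaxy-map}(2), so $G_{X}\left(x\right)\cap G_{X}\left(y\right)\neq\varnothing$, and \prettyref{prop:properties-of-galaxy-map}(3) yields $G_{X}\left(x\right)=G_{X}\left(y\right)$.

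For $(4)\Rightarrow(1)$, let $F=\set{x_{1},\dots,x_{n}}\subseteq X$ be a finite subset; we may assume $F\neq\varnothing$, since $\varnothing\in\mathcal{B}_{X}$ by downward closedness of the nonempty cover $\mathcal{B}_{X}$. For each $i$, $(4)$ gives $x_{i}\in G_{X}\left(x_{i}\right)=G_{X}\left(x_{1}\right)=\bigcup_{B\in\mathcal{BN}_{X}\left(x_{1}\right)}\prescript{\ast}{}{B}$, so there is a bounded set $B_{i}$ with $x_{1}\in B_{i}$ and $x_{i}\in\prescript{\ast}{}{B_{i}}$; as $x_{i}$ is standard, $x_{i}\in B_{i}$. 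Since $B_{1},\dots,B_{n}$ all contain $x_{1}$, iterating closure under non-disjoint unions shows $B_{1}\cup\dots\cup B_{n}$ is bounded, and $F\subseteq B_{1}\cup\dots\cup B_{n}$, so $F$ is bounded by downward closedness. Hence $X$ is connected.

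I do not anticipate a real obstacle: the whole argument is a matter of unwinding the definition of $G_{X}$ and applying parts (2) and (3) of \prettyref{prop:properties-of-galaxy-map}, together with the elementary fact that a standard point lies in $\prescript{\ast}{}{B}$ exactly when it lies in $B$. The only places needing a little care are the degenerate cases (empty space, empty finite subset) and the bridging-point trick used to boost closure under non-disjoint unions to closure under arbitrary finite unions in the proof of $(1)\Leftrightarrow(2)$.
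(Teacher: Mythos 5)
Your proof is correct. The paper is terser: it proves only $(1)\Leftrightarrow(3)$ explicitly, taking $(1)\Leftrightarrow(2)$ and $(3)\Leftrightarrow(4)$ as read. Your $(1)\Rightarrow(3)$ is exactly the paper's (observe that $\prescript{\ast}{}{\set{x,y}}=\set{x,y}$ and union over $y$). Where you genuinely diverge is in closing the cycle: the paper returns via $(3)\Rightarrow(1)$ by invoking \prettyref{prop:nonst-characterisation-of-boundedness} — which itself rests on the \nameref{lem:bornological-approximation-lemma} and thus on weak saturation — to go from $\prescript{\ast}{}{A}\subseteq G_{X}(x)$ straight to boundedness of the finite set $A$. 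You instead go $(3)\Rightarrow(4)\Rightarrow(1)$, and in $(4)\Rightarrow(1)$ you unwind the galaxy directly: each $x_i$ lies in some $\prescript{\ast}{}{B_i}$ with $x_1\in B_i$, standardness gives $x_i\in B_i$, and then the "bridging point" $x_1$ lets you chain non-disjoint unions. This is more elementary (no saturation needed at that step) at the cost of more bookkeeping. The same bridging trick cleanly handles $(1)\Rightarrow(2)$, which the paper leaves implicit. Both routes are fine; yours is the more self-contained, the paper's is the more economical given that \prettyref{prop:nonst-characterisation-of-boundedness} has already been established.
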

\begin{proof}
Let us only prove the equivalence between the standard definition
$\left(1\right)$ and the nonstandard one $\left(3\right)$.

$\left(1\right)\Rightarrow\left(3\right)$: $G_{X}\left(x\right)\supseteq\bigcup_{y\in X}\prescript{\ast}{}{\set{x,y}}=\bigcup_{y\in X}\set{x,y}=X$.

$\left(3\right)\Rightarrow\left(1\right)$: Let $A$ be a finite subset
of $X$. By assumption, $\prescript{\ast}{}{A}=A\subseteq X\subseteq G_{X}\left(x\right)$
holds for all $x\in X$. By \prettyref{prop:nonst-characterisation-of-boundedness},
$A$ is bounded.
\end{proof}
\begin{cor}
\label{cor:external-def-of-bornology}Let $X$ be any set. For each
galactic subset $G$ of $\prescript{\ast}{}{X}$ that contains $X$,
there is a unique bornology such that $G_{X}\left(x\right)=G$ holds
for all $x\in X$.
\end{cor}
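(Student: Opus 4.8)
The plan is to obtain the desired bornology as an instance of \prettyref{thm:external-def-of-prebornology} applied to the \emph{constant} galaxy map. Write $\hat{G}\colon X\to\mathcal{P}\left(\prescript{\ast}{}{X}\right)$ for the map $\hat{G}\left(x\right)=G$ for every $x\in X$ (slightly abusing notation, using $G$ both for the given galactic set and for this constant map). First I would verify that $\hat{G}$ satisfies conditions (1) to (3) of \prettyref{prop:properties-of-galaxy-map}. Condition (1), pointwise galacticness, is literally the hypothesis that $G$ is galactic. Condition (2) holds because $x\in X\subseteq G=\hat{G}\left(x\right)$. For condition (3): since $\hat{G}\left(x\right)=\hat{G}\left(y\right)=G$, the right-hand side of the biconditional always holds, and $\hat{G}\left(x\right)\cap\hat{G}\left(y\right)=G$, which is nonempty as soon as $X\neq\varnothing$ (again because $X\subseteq G$), while if $X=\varnothing$ there is nothing to check. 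So (1) to (3) hold in every case.

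Next, \prettyref{thm:external-def-of-prebornology} furnishes a unique prebornology $\mathcal{B}_{X}$ on $X$ whose galaxy map is $\hat{G}$, i.e.\ $G_{X}\left(x\right)=G$ for all $x\in X$. To see that $\mathcal{B}_{X}$ is in fact a bornology, I would invoke \prettyref{prop:characterisation-of-bornological-connectedness}: condition (3) there, namely $G_{X}\left(x\right)\supseteq X$ for all $x$, is satisfied since $G\supseteq X$ by hypothesis, and by the equivalence with condition (2) of that proposition this means precisely that $\mathcal{B}_{X}$ is a bornological space. This establishes existence.

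For uniqueness I would argue that every bornology on $X$ is in particular a prebornology: if $\mathcal{B}'$ is any bornology on $X$ with $G_{X}\left(x\right)=G$ for all $x\in X$, then $\mathcal{B}'$ is a prebornology whose galaxy map equals the constant map $\hat{G}$, and the uniqueness clause of \prettyref{thm:external-def-of-prebornology} forces $\mathcal{B}'=\mathcal{B}_{X}$. I do not expect a substantial obstacle here — the statement is essentially a repackaging of \prettyref{thm:external-def-of-prebornology} together with \prettyref{prop:characterisation-of-bornological-connectedness}. The only points requiring a line of care are the verification of axiom (3) of \prettyref{prop:properties-of-galaxy-map} for a constant galaxy map (where the equivalence degenerates to ``true $\iff$ true'') and the degenerate case $X=\varnothing$, i.e.\ $\prescript{\ast}{}{X}=\varnothing$ and hence $G=\varnothing$, where all assertions are trivial.
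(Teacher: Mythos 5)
Your proof is correct and is precisely the argument the paper intends: the corollary is stated without proof, as an immediate consequence of \prettyref{thm:external-def-of-prebornology} (applied to the constant map $x\mapsto G$) combined with the equivalence $(2)\Leftrightarrow(3)$ of \prettyref{prop:characterisation-of-bornological-connectedness}. Your care in checking condition (3) of \prettyref{prop:properties-of-galaxy-map} for the constant map and in handling the degenerate case $X=\varnothing$ is appropriate and matches the intended reading.
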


\subsection{\label{subsec:Examples-of-bornology}Examples of (pre)bornological
spaces}
\begin{example}
Let $X$ be a set. The \emph{maximal bornology} on $X$ is the power
set $\mathcal{P}\left(X\right)=\set{A|A\subseteq X}$, i.e., all subsets
are bounded. The galaxy map is identically $G_{X}\left(x\right)=\prescript{\ast}{}{X}$.
\end{example}

\begin{example}
Let $X$ be a set. The \emph{discrete prebornology} on $X$ is $\set{\varnothing}\cup\set{\set{x}|x\in X}$.
The galaxy map is $G_{X}\left(x\right)=\set{x}$. This prebornology
is connected (so a bornology) only when $\left|X\right|\leq1$.
\end{example}

\begin{example}
Let $X$ be a set. The \emph{finite bornology} on $X$ is $\set{A\subseteq X|A\text{ is finite}}$.
In this space, the bounded sets are precisely the finite subsets.
The galaxy map is identically $G_{X}\left(x\right)=X$.
\end{example}

\begin{example}
Let $X$ be a topological space. The \emph{compact bornology} of $X$
is
\[
\mathcal{P}_{c}\left(X\right)=\set{A\subseteq X|A\text{ is contained in some compact set}}.
\]
The galaxy map satisfies that $G_{X}\left(x\right)\subseteq\mathrm{NS}\left(X\right)$
for all $x\in X$. To see this, suppose that $A\subseteq X$ contains
$x$ and is contained in a compact set $K$. By the nonstandard characterisation
of compactness (cf. \cite[Theorem 4.1.13]{Rob66}), $\prescript{\ast}{}{A}\subseteq\prescript{\ast}{}{K}\subseteq\mathrm{NS}\left(K\right)\subseteq\mathrm{NS}\left(X\right)$.
Hence $G_{X}\left(x\right)\subseteq\mathrm{NS}\left(X\right)$.

If $X$ is $T_{1}$, we can consider another bornology on $X$, called
the \emph{relatively compact bornology}, defined as follows:
\[
\mathcal{P}_{rc}\left(X\right)=\set{A\subseteq X|A\text{ is relatively compact}}.
\]
Since $\mathcal{P}_{rc}\left(X\right)\subseteq\mathcal{P}_{c}\left(X\right)$,
its galaxy map also satisfies that $G_{X}\left(x\right)\subseteq\mathrm{NS}\left(X\right)$
for all $x\in X$.
\end{example}

\begin{example}
\label{exa:bounded-bornology} Let $X$ be a pseudometric space. The \emph{bounded bornology} of
$X$ is
\[
\set{B\subseteq X|B\text{ is bounded in the usual sense}}.
\]
The galaxy map is $G_{X}\left(x\right)=\set{y\in\prescript{\ast}{}{X}|\prescript{\ast}{}{d_{X}}\left(x,y\right)\text{ is finite}}$,
which is independent of $x$. This construction works even when the
pseudometric function is allowed to take the value $+\infty$. In
this case, the resulting prebornology may be disconnected. It is connected
when and only when the pseudometric function is finite-valued.
\end{example}

\begin{example}
Let $X$ be a prebornological space and $A$ a subset of $X$. The
\emph{subspace prebornology} of $A$ is $\mathcal{B}_{X}\restriction A=\set{B\cap A|B\in\mathcal{B}_{X}}$.
The galaxy map is $G_{A}\left(x\right)=G_{X}\left(x\right)\cap\prescript{\ast}{}{A}$.
If $X$ is connected, then so is $A$.
\end{example}

\begin{example}
Let $\set{\mathcal{B}_{i}}_{i\in I}$ be a family of prebornologies
on a fixed set $X$. The intersection $\bigcap_{i\in I}\mathcal{B}_{i}$
forms a prebornology on $X$. Its galaxy map is
\[
G_{X}\left(x\right)=\bigcup\Set{\prescript{\ast}{}{B}|x\in B\subseteq X\text{ and }\prescript{\ast}{}{B}\subseteq\bigcap_{i\in I}G_{i}\left(x\right)},
\]
where $G_{i}$ is the galaxy map for $\mathcal{B}_{i}$. The right
hand side may not be equal to $\bigcap_{i\in I}G_{i}\left(x\right)$,
because $\bigcap_{i\in I}G_{i}\left(x\right)$ may not be galactic.
However, if $I$ is finite, then $G_{X}\left(x\right)=\bigcap_{i\in I}G_{i}\left(x\right)$.
If each $\mathcal{B}_{i}$ is connected, then so is $\bigcap\mathcal{B}_{i}$.
\end{example}

\begin{example}
Given a prebornology $\mathcal{B}_{X}$ on a set $X$, the smallest
bornology $\mathcal{B}_{X}'\supseteq\mathcal{B}_{X}$ on $X$ exists.
Its galaxy map is given by $G_{X}'\left(x\right)=\bigcup_{y\in X}G_{X}\left(y\right)$,
where $G_{X}$ and $G_{X}'$ denote the galaxy maps for $\mathcal{B}_{X}$
and $\mathcal{B}_{X}'$, respectively.
\end{example}

\begin{example}
Let $\set{X_{i}}_{i\in I}$ be a family of prebornological spaces.
The \emph{product prebornology} on $P=\prod_{i\in I}X_{i}$ is 
\[
\Set{B\subseteq P|\pi_{i}\left(B\right)\in\mathcal{B}_{X_{i}}\text{ for all }i\in I},
\]
where $\pi_{i}\colon P\to X_{i}$ is the canonical projection. The
galaxy map is 
\[
G_{P}\left(x\right)=\bigcup\Set{\prescript{\ast}{}{B}|x\in B\subseteq P\text{ and }\prescript{\ast}{}{B}\subseteq\prod_{i\in I}G_{X_{i}}\left(x_{i}\right)\times\prod_{j\in\prescript{\ast}{}{I}\setminus I}\prescript{\ast}{}{X_{j}}}.
\]
If $I$ is finite, the remainder $\prescript{\ast}{}{I}\setminus I$
vanishes and $G_{P}\left(x\right)=\prod_{i\in I}G_{X_{i}}\left(x_{i}\right)$
holds. If each $X_{i}$ is connected, then so is $P$.
\end{example}

\begin{example}
Let $\set{X_{i}}_{i\in I}$ be a family of prebornological spaces.
The \emph{sum prebornology} on $S=\coprod_{i\in I}X_{i}$ is $\bigcup_{i\in I}\mathcal{B}_{X_{i}}$.
The galaxy map is $G_{S}\left(x\right)=G_{X_{i}}\left(x\right)$,
$x\in X_{i}$. If $\set{X_{i}}_{i\in I}$ contains at least two nonempty
spaces, the sum $S$ is disconnected.
\end{example}

\subsection{Bornological maps and proper maps}
\begin{defn}[Standard]
Let $X$ and $Y$ be prebornological spaces. A map $f\colon X\to Y$
is \emph{bornological} at $x\in X$ if the direct image $f$ maps
$\mathcal{BN}_{X}\left(x\right)$ to $\mathcal{BN}_{Y}\left(f\left(x\right)\right)$.
\end{defn}
This notion admits the following nonstandard characterisation which
is an obvious generalisation of \cite[Theorem 4.5]{KK16} to prebornology.
\begin{thm}
\label{thm:nonst-characterisation-of-bornological-map}Let $X$ and
$Y$ be prebornological spaces and let $f\colon X\to Y$ be a map.
Consider a point $x\in X$. The following are equivalent:
\begin{enumerate}
\item $f$ is bornological at $x$;
\item $\prescript{\ast}{}{f}\left(G_{X}\left(x\right)\right)\subseteq G_{Y}\left(f\left(x\right)\right)$.
\end{enumerate}
\end{thm}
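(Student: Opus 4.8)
The plan is to prove both implications by unwinding the definition of the galaxy map, using the transfer principle to commute $\ast$ with direct images and \prettyref{prop:nonst-characterisation-of-boundedness} to recognise bounded sets. No auxiliary construction is needed.

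For $\left(1\right)\Rightarrow\left(2\right)$ I would take an arbitrary $z\in G_{X}\left(x\right)$. By definition of the galaxy there is a $B\in\mathcal{BN}_{X}\left(x\right)$ with $z\in\prescript{\ast}{}{B}$. Since $f$ is bornological at $x$, the direct image $f\left(B\right)$ lies in $\mathcal{BN}_{Y}\left(f\left(x\right)\right)$; by transfer $\prescript{\ast}{}{f}\left(\prescript{\ast}{}{B}\right)=\prescript{\ast}{}{\left(f\left(B\right)\right)}$, and the latter is contained in $G_{Y}\left(f\left(x\right)\right)$ directly from the definition of the galaxy of $f\left(x\right)$. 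Hence $\prescript{\ast}{}{f}\left(z\right)\in G_{Y}\left(f\left(x\right)\right)$, giving $\prescript{\ast}{}{f}\left(G_{X}\left(x\right)\right)\subseteq G_{Y}\left(f\left(x\right)\right)$.

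For $\left(2\right)\Rightarrow\left(1\right)$ I would fix $B\in\mathcal{BN}_{X}\left(x\right)$ and show $f\left(B\right)\in\mathcal{BN}_{Y}\left(f\left(x\right)\right)$. From $B\in\mathcal{BN}_{X}\left(x\right)$ we get $x\in B$ and $\prescript{\ast}{}{B}\subseteq G_{X}\left(x\right)$ straight from the definition of $G_{X}$. Applying $\prescript{\ast}{}{f}$ and the hypothesis yields $\prescript{\ast}{}{\left(f\left(B\right)\right)}=\prescript{\ast}{}{f}\left(\prescript{\ast}{}{B}\right)\subseteq\prescript{\ast}{}{f}\left(G_{X}\left(x\right)\right)\subseteq G_{Y}\left(f\left(x\right)\right)$. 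Since $x\in B$, the set $f\left(B\right)$ is nonempty and contains $f\left(x\right)$, so clause $\left(3\right)$ of \prettyref{prop:nonst-characterisation-of-boundedness} shows $f\left(B\right)$ is bounded in $Y$; together with $f\left(x\right)\in f\left(B\right)$ this is exactly $f\left(B\right)\in\mathcal{BN}_{Y}\left(f\left(x\right)\right)$. As $B$ was arbitrary, $f$ is bornological at $x$.

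There is no real obstacle here; the statement is a routine dictionary translation between the standard and nonstandard descriptions of the bornological neighbourhood system. The only two points that need care are (i) invoking transfer in the form $\prescript{\ast}{}{f}\left(\prescript{\ast}{}{B}\right)=\prescript{\ast}{}{\left(f\left(B\right)\right)}$ for the $\ast$-image of a standard set, and (ii) in the reverse direction using \prettyref{prop:nonst-characterisation-of-boundedness} to upgrade the one-point condition $\prescript{\ast}{}{\left(f\left(B\right)\right)}\subseteq G_{Y}\left(f\left(x\right)\right)$ to genuine boundedness of $f\left(B\right)$, rather than attempting to verify boundedness directly.
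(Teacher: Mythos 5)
Your proof is correct and takes essentially the same approach as the paper. The only stylistic difference is in $\left(2\right)\Rightarrow\left(1\right)$: the paper re-runs the Bornological Approximation Lemma plus transfer directly, whereas you invoke clause (3) of \prettyref{prop:nonst-characterisation-of-boundedness} (itself a consequence of that lemma) to conclude boundedness of $f\left(B\right)$; this is the same idea packaged slightly more modularly.
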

\begin{proof}
$\left(1\right)\Rightarrow\left(2\right)$: The direct image $f$
maps $\mathcal{BN}_{X}\left(x\right)$ to $\mathcal{BN}_{Y}\left(f\left(x\right)\right)$.
Hence
\begin{align*}
\prescript{\ast}{}{f}\left(G_{X}\left(x\right)\right) & =\prescript{\ast}{}{f}\bigcup_{B\in\mathcal{BN}_{X}\left(x\right)}\prescript{\ast}{}{B}\\
 & =\bigcup_{B\in\mathcal{BN}_{X}\left(x\right)}\prescript{\ast}{}{f}\left(\prescript{\ast}{}{B}\right)\\
 & \subseteq\bigcup_{C\in\mathcal{BN}_{Y}\left(f\left(x\right)\right)}\prescript{\ast}{}{C}\\
 & =G_{Y}\left(f\left(x\right)\right).
\end{align*}

$\left(2\right)\Rightarrow\left(1\right)$: Let $B\in\mathcal{BN}_{X}\left(x\right)$.
By \nameref{lem:bornological-approximation-lemma}, choose a $C\in\prescript{\ast}{}{\mathcal{BN}_{Y}\left(f\left(x\right)\right)}$
such that $G_{Y}\left(f\left(x\right)\right)\subseteq C$. Then $\prescript{\ast}{}{f}\left(\prescript{\ast}{}{B}\right)\subseteq\prescript{\ast}{}{f}\left(G_{X}\left(x\right)\right)\subseteq G_{Y}\left(f\left(x\right)\right)\subseteq C\in\prescript{\ast}{}{\mathcal{BN}_{Y}\left(f\left(x\right)\right)}$.
By transfer, we have that $f\left(B\right)\in\mathcal{BN}_{Y}\left(f\left(x\right)\right)$.
\end{proof}
\begin{defn}[Standard]
Let $X$ and $Y$ be prebornological spaces. A map $f\colon X\to Y$
is said to be \emph{proper} if the inverse image $f^{-1}$ maps $\mathcal{B}_{Y}$
to $\mathcal{B}_{X}$.
\end{defn}
\begin{thm}
\label{thm:nonst-characterisation-of-proper-map}Let $X$ and $Y$
be prebornological spaces and let $f\colon X\to Y$ be a map. The
following are equivalent:

\begin{enumerate}
\item $f$ is proper;
\item for every $x\in X$, $\prescript{\ast}{}{f}^{-1}\left(G_{Y}\left(f\left(x\right)\right)\right)\subseteq G_{X}\left(x\right)$.
\end{enumerate}
\end{thm}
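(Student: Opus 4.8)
The plan is to route both implications through the nonstandard characterisation of boundedness in \prettyref{prop:nonst-characterisation-of-boundedness}, together with the elementary transfer identity $\prescript{\ast}{}{\left(f^{-1}\left(C\right)\right)}=\prescript{\ast}{}{f}^{-1}\left(\prescript{\ast}{}{C}\right)$ for preimages of standard sets. In other words, the whole statement is really a translation of ``$f^{-1}$ preserves boundedness'' into the galaxy language, and the galaxy language makes it a one-line inclusion chase in each direction.

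For $\left(1\right)\Rightarrow\left(2\right)$ I would fix $x\in X$, take an arbitrary $z\in\prescript{\ast}{}{X}$ with $\prescript{\ast}{}{f}\left(z\right)\in G_{Y}\left(f\left(x\right)\right)$, and unwind the definition of the galaxy: there is a bounded set $C$ with $f\left(x\right)\in C$ and $\prescript{\ast}{}{f}\left(z\right)\in\prescript{\ast}{}{C}$, so $z\in\prescript{\ast}{}{f}^{-1}\left(\prescript{\ast}{}{C}\right)=\prescript{\ast}{}{\left(f^{-1}\left(C\right)\right)}$. Properness makes $f^{-1}\left(C\right)$ a bounded set containing $x$, i.e. $f^{-1}\left(C\right)\in\mathcal{BN}_{X}\left(x\right)$, hence $\prescript{\ast}{}{\left(f^{-1}\left(C\right)\right)}\subseteq G_{X}\left(x\right)$ and therefore $z\in G_{X}\left(x\right)$.

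For $\left(2\right)\Rightarrow\left(1\right)$ I would take $C\in\mathcal{B}_{Y}$ and show $f^{-1}\left(C\right)\in\mathcal{B}_{X}$. The empty preimage is bounded by downward closedness, so I may assume there is $x\in f^{-1}\left(C\right)$; then $f\left(x\right)\in C$, and since $C$ is bounded \prettyref{prop:nonst-characterisation-of-boundedness} gives $\prescript{\ast}{}{C}\subseteq G_{Y}\left(f\left(x\right)\right)$. Combining with hypothesis (2),
\[
\prescript{\ast}{}{\left(f^{-1}\left(C\right)\right)}=\prescript{\ast}{}{f}^{-1}\left(\prescript{\ast}{}{C}\right)\subseteq\prescript{\ast}{}{f}^{-1}\left(G_{Y}\left(f\left(x\right)\right)\right)\subseteq G_{X}\left(x\right),
\]
and since $x\in f^{-1}\left(C\right)$, another application of \prettyref{prop:nonst-characterisation-of-boundedness} shows $f^{-1}\left(C\right)$ is bounded.

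I do not expect a genuine obstacle here; the proof is a short diagram chase. The only points needing a moment's attention are remembering that $\ast$ commutes with preimages, and, in $\left(2\right)\Rightarrow\left(1\right)$, treating the empty preimage separately so that \prettyref{prop:nonst-characterisation-of-boundedness} is applied with a witness point lying inside $f^{-1}\left(C\right)$ — which is precisely what makes version (3) of that proposition the convenient one to invoke.
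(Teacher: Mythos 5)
Your proof is correct and follows essentially the same route as the paper's: both directions are reduced to \prettyref{prop:nonst-characterisation-of-boundedness} together with the fact that $\ast$ commutes with preimages and unions. The only cosmetic difference is that in $(1)\Rightarrow(2)$ you argue pointwise about a witness $z\in\prescript{\ast}{}{X}$, whereas the paper phrases the same step as a union over $B\in\mathcal{BN}_{Y}\left(f\left(x\right)\right)$; the content is identical.
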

\begin{proof}
$\left(1\right)\Rightarrow\left(2\right)$: For each $B\in\mathcal{BN}_{Y}\left(f\left(x\right)\right)$,
by \prettyref{prop:nonst-characterisation-of-boundedness}, $\prescript{\ast}{}{f}^{-1}\left(\prescript{\ast}{}{B}\right)\subseteq G_{X}\left(x\right)$
holds. We have that
\[
\prescript{\ast}{}{f}^{-1}\left(G_{Y}\left(f\left(x\right)\right)\right)=\bigcup_{B\in\mathcal{BN}_{Y}\left(f\left(x\right)\right)}\prescript{\ast}{}{f}^{-1}\left(\prescript{\ast}{}{B}\right)\subseteq G_{X}\left(x\right).
\]

$\left(2\right)\Rightarrow\left(1\right)$: Let $B\in\mathcal{B}_{Y}$.
If $f^{-1}\left(B\right)$ is empty, it is bounded. If not, let $x\in f^{-1}\left(B\right)$.
Since $f\left(x\right)\in B$, we have that $\prescript{\ast}{}{B}\subseteq G_{Y}\left(f\left(x\right)\right)$
by \prettyref{prop:nonst-characterisation-of-boundedness}. By (2),
$\prescript{\ast}{}{f}^{-1}\left(\prescript{\ast}{}{B}\right)\subseteq G_{X}\left(x\right)$
holds. By \prettyref{prop:nonst-characterisation-of-boundedness},
$f^{-1}\left(B\right)$ is bounded.
\end{proof}
\begin{defn}
Let $X$ be a prebornological space. A point $x\in\prescript{\ast}{}{X}$
is said to be \emph{finite} if there exists an $x'\in X$ such that
$x\in G_{X}\left(x'\right)$. A point of $\prescript{\ast}{}{X}$
is said to be \emph{infinite} if it is not finite. We denote the set
of all finite points of $\prescript{\ast}{}{X}$ by $\mathrm{FIN}\left(X\right)$
and the set of all infinite points of $\prescript{\ast}{}{X}$ by
$\mathrm{INF}\left(X\right)$.
\end{defn}
Using this terminology, we can rephrase the characterisations of bornologicity
and properness more simply.
\begin{thm}
\label{thm:nonst-characterisations-of-bornological-and-proper}Let
$X$ and $Y$ be prebornological spaces and let $f\colon X\to Y$
be a map.
\begin{enumerate}
\item If $f$ is bornological, then $\prescript{\ast}{}{f}$ maps $\mathrm{FIN}\left(X\right)$
to $\mathrm{FIN}\left(Y\right)$. If $Y$ is connected, the converse
is also true.
\item If $f$ is bornological, then $\prescript{\ast}{}{f}^{-1}$ maps $\mathrm{INF}\left(Y\right)$
to $\mathrm{INF}\left(X\right)$ as a multi-valued map. If $Y$ is
connected, the converse is also true.
\item If $f$ is proper, then $\prescript{\ast}{}{f}^{-1}$ maps $\mathrm{FIN}\left(Y\right)$
to $\mathrm{FIN}\left(X\right)$ as a multi-valued map. If $X$ is
connected, the converse is also true.
\item If $f$ is proper, then $\prescript{\ast}{}{f}$ maps $\mathrm{INF}\left(X\right)$
to $\mathrm{INF}\left(Y\right)$. If $X$ is connected, the converse
is also true.
\end{enumerate}
\end{thm}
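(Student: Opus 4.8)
The plan is to notice that parts (1) and (2) are restatements of one another, and likewise parts (3) and (4), so that only (1) and (4) require a genuine argument. Since $\prescript{\ast}{}{f}\colon\prescript{\ast}{}{X}\to\prescript{\ast}{}{Y}$ is a total map, the inclusion $\prescript{\ast}{}{f}^{-1}\left(\mathrm{INF}\left(Y\right)\right)\subseteq\mathrm{INF}\left(X\right)$ is equivalent, by complementing in $\prescript{\ast}{}{X}$ and using $\prescript{\ast}{}{X}\setminus\prescript{\ast}{}{f}^{-1}\left(S\right)=\prescript{\ast}{}{f}^{-1}\left(\prescript{\ast}{}{Y}\setminus S\right)$, to $\mathrm{FIN}\left(X\right)\subseteq\prescript{\ast}{}{f}^{-1}\left(\mathrm{FIN}\left(Y\right)\right)$, i.e. to $\prescript{\ast}{}{f}\left(\mathrm{FIN}\left(X\right)\right)\subseteq\mathrm{FIN}\left(Y\right)$; thus the multi-valued assertion of (2) is literally the contrapositive of the assertion of (1), and likewise (3) is the contrapositive of (4). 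Hence it suffices to establish (1) and (4).

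For (1): if $f$ is bornological and $\xi\in\mathrm{FIN}\left(X\right)$, choose $x'\in X$ with $\xi\in G_{X}\left(x'\right)$; bornologicity of $f$ at $x'$ and \prettyref{thm:nonst-characterisation-of-bornological-map} give $\prescript{\ast}{}{f}\left(\xi\right)\in\prescript{\ast}{}{f}\left(G_{X}\left(x'\right)\right)\subseteq G_{Y}\left(f\left(x'\right)\right)\subseteq\mathrm{FIN}\left(Y\right)$. Conversely, suppose $Y$ is connected and $\prescript{\ast}{}{f}\left(\mathrm{FIN}\left(X\right)\right)\subseteq\mathrm{FIN}\left(Y\right)$; if $X=\varnothing$ both sides are vacuous, so fix $x\in X$. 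Then $G_{X}\left(x\right)\subseteq\mathrm{FIN}\left(X\right)$ yields $\prescript{\ast}{}{f}\left(G_{X}\left(x\right)\right)\subseteq\mathrm{FIN}\left(Y\right)$, and since $Y$ is connected and nonempty (it contains $f\left(x\right)$), \prettyref{prop:characterisation-of-bornological-connectedness} gives $\mathrm{FIN}\left(Y\right)=\bigcup_{y\in Y}G_{Y}\left(y\right)=G_{Y}\left(f\left(x\right)\right)$; hence $\prescript{\ast}{}{f}\left(G_{X}\left(x\right)\right)\subseteq G_{Y}\left(f\left(x\right)\right)$, so $f$ is bornological at $x$ by \prettyref{thm:nonst-characterisation-of-bornological-map}, and $x$ being arbitrary, $f$ is bornological.

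For (4): I argue the forward implication contrapositively. Assume $f$ is proper and $\prescript{\ast}{}{f}\left(\xi\right)\in\mathrm{FIN}\left(Y\right)$; then $\prescript{\ast}{}{f}\left(\xi\right)\in\prescript{\ast}{}{B}$ for some bounded $B\subseteq Y$, so $\xi\in\prescript{\ast}{}{f}^{-1}\left(\prescript{\ast}{}{B}\right)=\prescript{\ast}{}{\left(f^{-1}\left(B\right)\right)}$ by transfer. As $f$ is proper, $f^{-1}\left(B\right)\in\mathcal{B}_{X}$, and this bounded set is nonempty since $\prescript{\ast}{}{\left(f^{-1}\left(B\right)\right)}\ni\xi$; by \prettyref{prop:nonst-characterisation-of-boundedness} we obtain $\xi\in\prescript{\ast}{}{\left(f^{-1}\left(B\right)\right)}\subseteq G_{X}\left(x'\right)$ for any $x'\in f^{-1}\left(B\right)$, so $\xi\in\mathrm{FIN}\left(X\right)$ --- which is exactly the contrapositive of ``$\prescript{\ast}{}{f}$ maps $\mathrm{INF}\left(X\right)$ into $\mathrm{INF}\left(Y\right)$''. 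Conversely, suppose $X$ is connected and $\prescript{\ast}{}{f}\left(\mathrm{INF}\left(X\right)\right)\subseteq\mathrm{INF}\left(Y\right)$; if $X=\varnothing$ both sides are vacuous, so fix $x\in X$ and let $\xi\in\prescript{\ast}{}{f}^{-1}\left(G_{Y}\left(f\left(x\right)\right)\right)$. Then $\prescript{\ast}{}{f}\left(\xi\right)\in G_{Y}\left(f\left(x\right)\right)\subseteq\mathrm{FIN}\left(Y\right)$, so the contrapositive of the hypothesis gives $\xi\in\mathrm{FIN}\left(X\right)$, which equals $G_{X}\left(x\right)$ by connectedness of $X$ (\prettyref{prop:characterisation-of-bornological-connectedness}). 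Hence $\prescript{\ast}{}{f}^{-1}\left(G_{Y}\left(f\left(x\right)\right)\right)\subseteq G_{X}\left(x\right)$ for all $x$, so $f$ is proper by \prettyref{thm:nonst-characterisation-of-proper-map}.

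The computations are routine; the one point deserving attention, and the reason a connectedness hypothesis is unavoidable in the converses, is that $\mathrm{FIN}\left(X\right)=\bigcup_{x\in X}G_{X}\left(x\right)$ only records the union of all galaxies, whereas bornologicity and properness of $f$ are conditions on each individual galaxy; connectedness is precisely the condition under which all galaxies of a space coincide (\prettyref{prop:characterisation-of-bornological-connectedness}), so that control of $\prescript{\ast}{}{f}$ on $\mathrm{FIN}$ upgrades to control on every galaxy. No saturation beyond that already invoked in \prettyref{thm:nonst-characterisation-of-bornological-map}, \prettyref{thm:nonst-characterisation-of-proper-map} and \prettyref{prop:nonst-characterisation-of-boundedness} is needed.
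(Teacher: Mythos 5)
Your proof is correct and takes essentially the same approach as the paper: the paper also observes that (2) and (4) are merely contrapositive reformulations of (1) and (3) and then derives each of (1) and (3) from \prettyref{thm:nonst-characterisation-of-bornological-map}, \prettyref{thm:nonst-characterisation-of-proper-map} and \prettyref{prop:characterisation-of-bornological-connectedness}. Your treatment of the forward implication in (4) usefully unpacks a step the paper labels ``immediate'' (passing from $\prescript{\ast}{}{f}(\xi)\in G_Y(y)$ for arbitrary $y\in Y$ to a bounded set of the form $f^{-1}(B)$), but the underlying argument is the same.
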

\begin{proof}
(2) and (4) immediately follow from (1) and (3), respectively. We
only prove (1) and (3).
\begin{enumerate}
\item The forward direction is immediate from \prettyref{thm:nonst-characterisation-of-bornological-map}.
Suppose that $Y$ is connected and $\prescript{\ast}{}{f}\left(\mathrm{FIN}\left(X\right)\right)\subseteq\mathrm{FIN}\left(Y\right)$.
Let $x\in X$. Then, $\prescript{\ast}{}{f}\left(G_{X}\left(x\right)\right)\subseteq\prescript{\ast}{}{f}\left(\mathrm{FIN}\left(X\right)\right)$.
By \prettyref{prop:characterisation-of-bornological-connectedness},
$\mathrm{FIN}\left(Y\right)=G_{Y}\left(f\left(x\right)\right)$. Combining
them, we have that $\prescript{\ast}{}{f}\left(G_{X}\left(x\right)\right)\subseteq G_{Y}\left(f\left(x\right)\right)$.
By \prettyref{thm:nonst-characterisation-of-bornological-map}, $f$
is bornological.
\item[3.] The forward direction is immediate from \prettyref{thm:nonst-characterisation-of-proper-map}.
Suppose that $X$ is connected and $\prescript{\ast}{}{f}^{-1}\left(\mathrm{FIN}\left(Y\right)\right)\subseteq\mathrm{FIN}\left(X\right)$.
Let $x\in X$. Then, $\prescript{\ast}{}{f}^{-1}\left(G_{Y}\left(f\left(x\right)\right)\right)\subseteq\prescript{\ast}{}{f}^{-1}\left(\mathrm{FIN}\left(Y\right)\right)\subseteq\mathrm{FIN}\left(X\right)$.
By \prettyref{prop:characterisation-of-bornological-connectedness},
$\mathrm{FIN}\left(X\right)=G_{X}\left(x\right)$ holds. Hence $\prescript{\ast}{}{f}^{-1}\left(G_{Y}\left(f\left(x\right)\right)\right)\subseteq G_{X}\left(x\right)$.
By \prettyref{thm:nonst-characterisation-of-proper-map}, $f$ is
proper.
\end{enumerate}
\end{proof}
Let us demonstrate the use of such characterisations by proving the
following well-known result in elementary functional analysis.
\begin{prop}[Standard]
Let $X$ and $Y$ be normed vector spaces with the bounded bornologies.
For each linear map $f\colon X\to Y$, $f$ is continuous if and only
if $f$ is bornological.
\end{prop}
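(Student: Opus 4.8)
The plan is to reduce both conditions to statements about the origin and then play the scaling structure of $\prescript{\ast}{}{X}$ against that of $\prescript{\ast}{}{Y}$. First I would record the relevant translations. By \prettyref{exa:bounded-bornology} the galaxy map of a normed space $X$ is $G_{X}(x)=\set{y\in\prescript{\ast}{}{X}|\|y\|\text{ is finite}}$, independent of $x$; since a norm-metric is finite-valued, $X$ is connected, and since $d_{X}(0,y)=\|y\|$ we get $\mathrm{FIN}(X)=G_{X}(0)=\set{y\in\prescript{\ast}{}{X}|\|y\|\text{ finite}}$ by \prettyref{prop:characterisation-of-bornological-connectedness} (and likewise for $Y$). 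As $Y$ is connected, \prettyref{thm:nonst-characterisations-of-bornological-and-proper}(1) tells us that $f$ is bornological iff $\prescript{\ast}{}{f}$ maps $\mathrm{FIN}(X)$ into $\mathrm{FIN}(Y)$, i.e.\ $\|x\|$ finite $\Rightarrow\|\prescript{\ast}{}{f}(x)\|$ finite. On the other hand, linearity makes $f$ continuous iff it is continuous at $0$ (nonstandardly: $x\approx_{X}x'$ gives $\prescript{\ast}{}{f}(x)-f(x')=\prescript{\ast}{}{f}(x-x')$), and by the standard nonstandard characterisation of continuity (cf.\ \cite{Rob66}), together with the fact that the balls $B(0,r)$ with $r>0$ standard form a neighbourhood base at $0$, this is equivalent to $\prescript{\ast}{}{f}(\mu_{X}(0))\subseteq\mu_{Y}(0)$, where $\mu_{X}(0)=\set{x\in\prescript{\ast}{}{X}|\|x\|\text{ infinitesimal}}$. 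So everything reduces to showing, for the $\prescript{\ast}{}{\mathbb{R}}$-linear map $\prescript{\ast}{}{f}$, that sending infinitesimal-norm vectors to infinitesimal-norm vectors is equivalent to sending finite-norm vectors to finite-norm vectors.

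For continuity $\Rightarrow$ bornological, I would take $x\in\prescript{\ast}{}{X}$ with $\|x\|$ finite; if $\prescript{\ast}{}{f}(x)=0$ there is nothing to check, and otherwise, assuming toward a contradiction that $\|\prescript{\ast}{}{f}(x)\|$ is infinite, set $\varepsilon=1/\|\prescript{\ast}{}{f}(x)\|$, a positive infinitesimal; then $\|\varepsilon x\|=\varepsilon\|x\|$ is infinitesimal, so $\prescript{\ast}{}{f}(\varepsilon x)=\varepsilon\,\prescript{\ast}{}{f}(x)$ must have infinitesimal norm, contradicting $\|\varepsilon\,\prescript{\ast}{}{f}(x)\|=1$. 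For bornological $\Rightarrow$ continuity, I would take $\|x\|$ infinitesimal; if $x=0$ there is nothing to check, and otherwise $x/\|x\|$ has norm $1$ and hence lies in $\mathrm{FIN}(X)$, so $\prescript{\ast}{}{f}(x/\|x\|)$ lies in $\mathrm{FIN}(Y)$, whence $\|\prescript{\ast}{}{f}(x)\|=\|x\|\cdot\|\prescript{\ast}{}{f}(x/\|x\|)\|$ is infinitesimal (infinitesimal times finite), i.e.\ $\prescript{\ast}{}{f}(x)\in\mu_{Y}(0)$.

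The routine part is the bookkeeping with finite and infinitesimal elements of $\prescript{\ast}{}{\mathbb{R}}$ and the check that the nonstandard scalars $\varepsilon$ and $1/\|x\|$ really may be multiplied into $\prescript{\ast}{}{X}$ and commuted past $\prescript{\ast}{}{f}$ — this is just the transfer of the vector-space axioms and of the linearity of $f$. The one point that takes a little thought is the asymmetry between the two directions: the bornological-to-continuous implication is a single rescaling by $1/\|x\|$, whereas the continuous-to-bornological implication needs the slightly less obvious trick of rescaling by the reciprocal of the supposedly infinite target norm $\|\prescript{\ast}{}{f}(x)\|$, and I expect that small step to be the only place where the argument is not completely mechanical.
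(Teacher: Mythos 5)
Your proof is correct and takes essentially the same route as the paper's: both reduce to behaviour at the origin via linearity and then rescale by $\|\prescript{\ast}{}{f}(x)\|^{-1}$ (resp.\ $\|x\|^{-1}$) to trade a finite-to-infinite failure for an infinitesimal-to-non-infinitesimal one and vice versa. The only cosmetic difference is that you argue the continuity $\Rightarrow$ bornological direction by contradiction and the converse directly, whereas the paper states both as contrapositives; the underlying manipulations are identical.
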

\begin{proof}
We only need to consider the behaviour around $0$. Firstly, suppose
that $f$ is not bornological at $0$. By \prettyref{thm:nonst-characterisations-of-bornological-and-proper},
we can choose an $x\in\mathrm{FIN}\left(X\right)$ such that $\prescript{\ast}{}{f}\left(x\right)\notin\mathrm{FIN}\left(Y\right)$.
Let $\lambda=\left\Vert \prescript{\ast}{}{f}\left(x\right)\right\Vert ^{-1}$.
Then, $\left\Vert \lambda x\right\Vert =\lambda\left\Vert x\right\Vert =\text{infinitesimal}\times\text{finite}=\text{infinitesimal}$,
so $\lambda x\in\mu_{X}\left(0\right)$. However, $\left\Vert \prescript{\ast}{}{f}\left(\lambda x\right)\right\Vert =\lambda\left\Vert \prescript{\ast}{}{f}\left(x\right)\right\Vert =1$,
so $\prescript{\ast}{}{f}\left(\lambda x\right)\notin\mu_{Y}\left(0\right)$.
By the nonstandard characterisation of continuity (cf. \cite[Theorem 4.2.7]{Rob66}),
$f$ is discontinuous at $0$.

Secondly, suppose that $f$ is discontinuous at $0$. By the nonstandard
characterisation of continuity, we can choose an $x\in\mu_{X}\left(0\right)$
such that $\prescript{\ast}{}{f}\left(x\right)\notin\mu_{Y}\left(0\right)$.
Now, let $\lambda=\left\Vert x\right\Vert ^{-1}$. Since $\left\Vert \lambda x\right\Vert =1$,
we have $\lambda x\in\mathrm{FIN}\left(X\right)$. On the other hand,
$\left\Vert \prescript{\ast}{}{f}\left(\lambda x\right)\right\Vert =\lambda\left\Vert \prescript{\ast}{}{f}\left(x\right)\right\Vert =\text{infinite}\times\text{non-infinitesimal}=\text{infinite}$,
so $\prescript{\ast}{}{f}\left(\lambda x\right)\notin\mathrm{FIN}\left(Y\right)$.
By \prettyref{thm:nonst-characterisations-of-bornological-and-proper},
$f$ is not bornological.
\end{proof}

\subsection{Simply bounded and equibounded families}
\begin{defn}[Standard]
Let $X$ and $Y$ be prebornological spaces. We denote by $Y^{X}$
the set of all maps from $X$ to $Y$. Let $\mathcal{F}$ be a subset
of $Y^{X}$. For $B\subseteq X$, we denote $\mathcal{F}\left(B\right)=\set{f\left(x\right)|f\in\mathcal{F}\text{ and }x\in B}$.
$\mathcal{F}$ is said to be \emph{simply bounded} if $\mathcal{F}\left(x\right)\in\mathcal{B}_{Y}$
for all $x\in X$. $\mathcal{F}$ is said to be \emph{equibounded}
if $\mathcal{F}\left(B\right)\in\mathcal{B}_{Y}$ for all $B\in\mathcal{B}_{X}$.
\end{defn}
\begin{thm}
Let $X$ and $Y$ be prebornological space and let $\mathcal{F}\subseteq Y^{X}$
be nonempty. The following are equivalent:
\begin{enumerate}
\item $\mathcal{F}$ is simply bounded;
\item for every $x\in X$, there exists a $y\in Y$ such that $\prescript{\ast}{}{\mathcal{F}}\left(x\right)\subseteq G_{Y}\left(y\right)$.
\end{enumerate}
\end{thm}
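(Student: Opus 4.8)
The plan is to reduce the claim, for each fixed $x \in X$, to \prettyref{prop:nonst-characterisation-of-boundedness} applied to the subset $\mathcal{F}\left(x\right) \subseteq Y$. First I would dispose of the degenerate case $Y = \varnothing$: since $\mathcal{F} \neq \varnothing$, the set $Y^{X}$ is nonempty, which forces $X = \varnothing$, and then both (1) and (2) hold vacuously. So I may assume $Y \neq \varnothing$ from now on.

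The one point that needs care is the identification $\prescript{\ast}{}{\mathcal{F}}\left(x\right) = \prescript{\ast}{}{\left(\mathcal{F}\left(x\right)\right)}$ for $x \in X$. This is a matter of transfer with $x$ held as a standard parameter: the statement $\forall z\,\bigl(z \in \mathcal{F}\left(x\right) \leftrightarrow \exists f \in \mathcal{F}\ \bigl(f\left(x\right) = z\bigr)\bigr)$ transfers to $\forall z\,\bigl(z \in \prescript{\ast}{}{\left(\mathcal{F}\left(x\right)\right)} \leftrightarrow \exists f \in \prescript{\ast}{}{\mathcal{F}}\ \bigl(f\left(x\right) = z\bigr)\bigr)$, and the right-hand set is exactly $\prescript{\ast}{}{\mathcal{F}}\left(x\right)$.

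Granting this, the equivalence falls out directly. By definition $\mathcal{F}$ is simply bounded iff $\mathcal{F}\left(x\right) \in \mathcal{B}_{Y}$ for every $x \in X$. Since $Y \neq \varnothing$, the equivalence of clauses (1) and (4) in \prettyref{prop:nonst-characterisation-of-boundedness}, applied to $\mathcal{F}\left(x\right)$, states that $\mathcal{F}\left(x\right)$ is bounded iff $\prescript{\ast}{}{\left(\mathcal{F}\left(x\right)\right)} \subseteq G_{Y}\left(y\right)$ for some $y \in Y$. Rewriting $\prescript{\ast}{}{\left(\mathcal{F}\left(x\right)\right)}$ as $\prescript{\ast}{}{\mathcal{F}}\left(x\right)$ and quantifying over $x \in X$ gives precisely condition (2). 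I do not expect a genuine obstacle here; the content is entirely in \prettyref{prop:nonst-characterisation-of-boundedness}, and the remaining work is the bookkeeping of the empty cases and the transfer identity above.
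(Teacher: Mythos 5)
Your proof is correct and takes essentially the same route as the paper, which simply states that the theorem is ``Immediate from Proposition \ref{prop:nonst-characterisation-of-boundedness}.'' You have merely filled in the bookkeeping (the empty-space case and the transfer identity $\prescript{\ast}{}{\mathcal{F}}\left(x\right)=\prescript{\ast}{}{\left(\mathcal{F}\left(x\right)\right)}$) that the paper leaves implicit.
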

\begin{proof}
Immediate from \prettyref{prop:nonst-characterisation-of-boundedness}.
\end{proof}
\begin{cor}
\label{cor:nonst-char-simply-bounded}Let $X$ and $Y$ be prebornological
spaces and let $\mathcal{F}\subseteq Y^{X}$. If $\mathcal{F}$ is
simply bounded, then $\prescript{\ast}{}{\mathcal{F}}$ maps $X$
to $\mathrm{FIN}\left(Y\right)$ as a multi-valued map. If $Y$ is
connected, the converse is also true.
\end{cor}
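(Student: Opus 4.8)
The plan is to read the corollary off directly from the preceding theorem, in exact analogy with the proof of \prettyref{thm:nonst-characterisations-of-bornological-and-proper}(1). First let me fix the reading of the conclusion: ``$\prescript{\ast}{}{\mathcal{F}}$ maps $X$ to $\mathrm{FIN}\left(Y\right)$ as a multi-valued map'' is to mean that $\prescript{\ast}{}{\mathcal{F}}\left(x\right)=\set{g\left(x\right)|g\in\prescript{\ast}{}{\mathcal{F}}}$ is contained in $\mathrm{FIN}\left(Y\right)$ for every $x\in X$. When $\mathcal{F}=\varnothing$ both sides of the biconditional hold trivially (then $\mathcal{F}\left(x\right)=\varnothing$ is bounded and $\prescript{\ast}{}{\mathcal{F}}\left(x\right)=\varnothing$), so I assume $\mathcal{F}\neq\varnothing$ and apply the preceding theorem.

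For the forward direction, suppose $\mathcal{F}$ is simply bounded. By the preceding theorem, for each $x\in X$ there is a $y\in Y$ with $\prescript{\ast}{}{\mathcal{F}}\left(x\right)\subseteq G_{Y}\left(y\right)$. Every point of $G_{Y}\left(y\right)$ is finite by the definition of finiteness, so $G_{Y}\left(y\right)\subseteq\mathrm{FIN}\left(Y\right)$, and therefore $\prescript{\ast}{}{\mathcal{F}}\left(x\right)\subseteq\mathrm{FIN}\left(Y\right)$.

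For the converse, assume $Y$ is connected and $\prescript{\ast}{}{\mathcal{F}}\left(x\right)\subseteq\mathrm{FIN}\left(Y\right)$ for all $x\in X$. Since $\mathcal{F}\neq\varnothing$, if $Y=\varnothing$ then $X=\varnothing$ and $\mathcal{F}$ is vacuously simply bounded; so assume $Y\neq\varnothing$ and fix $y_{0}\in Y$. By \prettyref{prop:characterisation-of-bornological-connectedness}, connectedness of $Y$ gives $G_{Y}\left(y\right)=G_{Y}\left(y_{0}\right)$ for all $y\in Y$, hence $\mathrm{FIN}\left(Y\right)=\bigcup_{y\in Y}G_{Y}\left(y\right)=G_{Y}\left(y_{0}\right)$. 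Thus $\prescript{\ast}{}{\mathcal{F}}\left(x\right)\subseteq G_{Y}\left(y_{0}\right)$ for every $x\in X$, which is exactly condition~(2) of the preceding theorem; therefore $\mathcal{F}$ is simply bounded.

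I do not expect any genuine obstacle: the corollary is a restatement of the preceding theorem in the ``finite point'' vocabulary. The only things needing care are the degenerate cases above and the one substantive point, namely that connectedness of $Y$ collapses $\mathrm{FIN}\left(Y\right)$ to a single galaxy $G_{Y}\left(y_{0}\right)$ --- this is where \prettyref{prop:characterisation-of-bornological-connectedness} enters, and it is what upgrades the existential ``for some $y$'' in the preceding theorem to the uniform statement about $\mathrm{FIN}\left(Y\right)$.
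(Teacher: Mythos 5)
Your proof is correct and takes exactly the approach the paper intends: the corollary is left without an explicit proof precisely because it reads off the preceding theorem via the identity $\mathrm{FIN}\left(Y\right)=\bigcup_{y\in Y}G_{Y}\left(y\right)$, which collapses to a single galaxy $G_{Y}\left(y_{0}\right)$ when $Y$ is connected (\prettyref{prop:characterisation-of-bornological-connectedness}), exactly as you argue. Your attention to the degenerate cases $\mathcal{F}=\varnothing$ and $Y=\varnothing$ is a bit more careful than the paper's implicit treatment (the theorem hypothesises $\mathcal{F}$ nonempty while the corollary does not), but it does not change the substance.
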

\begin{thm}
Let $X$ and $Y$ be prebornological spaces and let $\mathcal{F}\subseteq Y^{X}$
be nonempty. The following are equivalent:
\begin{enumerate}
\item $\mathcal{F}$ is equibounded;
\item for every $x\in X$, there exists a $y\in Y$ such that $\prescript{\ast}{}{\mathcal{F}}\left(G_{X}\left(x\right)\right)\subseteq G_{Y}\left(y\right)$.
\end{enumerate}
\end{thm}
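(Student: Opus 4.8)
The plan is to derive both implications from \prettyref{prop:nonst-characterisation-of-boundedness} together with the defining formula $G_{X}\left(x\right)=\bigcup_{B\in\mathcal{BN}_{X}\left(x\right)}\prescript{\ast}{}{B}$, following the same bookkeeping as the proof of \prettyref{thm:nonst-characterisation-of-bornological-map}. The only computational facts needed are that, for any internal set $A$, $\prescript{\ast}{}{\mathcal{F}}\left(A\right)=\set{h\left(a\right)|h\in\prescript{\ast}{}{\mathcal{F}},\ a\in A}$, so that this multi-valued image commutes with arbitrary unions, and that, by transfer applied to the definable operation $\left(\mathcal{G},C\right)\mapsto\mathcal{G}\left(C\right)$, one has $\prescript{\ast}{}{\mathcal{F}}\left(\prescript{\ast}{}{C}\right)=\prescript{\ast}{}{\left(\mathcal{F}\left(C\right)\right)}$ for every $C\subseteq X$.

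For $\left(1\right)\Rightarrow\left(2\right)$, fix $x\in X$. Since $\mathcal{F}\neq\varnothing$ we may pick $f_{0}\in\mathcal{F}$ and set $y=f_{0}\left(x\right)\in\mathcal{F}\left(x\right)$. For every $B\in\mathcal{BN}_{X}\left(x\right)$ the set $\mathcal{F}\left(B\right)$ is bounded by equiboundedness and contains $\mathcal{F}\left(x\right)\ni y$, hence $\mathcal{F}\left(B\right)\in\mathcal{BN}_{Y}\left(y\right)$ and therefore $\prescript{\ast}{}{\mathcal{F}}\left(\prescript{\ast}{}{B}\right)=\prescript{\ast}{}{\left(\mathcal{F}\left(B\right)\right)}\subseteq G_{Y}\left(y\right)$ by the definition of the galaxy map. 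Taking the union over $B\in\mathcal{BN}_{X}\left(x\right)$ and using $G_{X}\left(x\right)=\bigcup_{B\in\mathcal{BN}_{X}\left(x\right)}\prescript{\ast}{}{B}$ yields $\prescript{\ast}{}{\mathcal{F}}\left(G_{X}\left(x\right)\right)\subseteq G_{Y}\left(y\right)$. The key point is that a \emph{single} base point $y$ works for \emph{all} $B\in\mathcal{BN}_{X}\left(x\right)$ at once, which is precisely where the nonemptiness hypothesis on $\mathcal{F}$ is used.

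For $\left(2\right)\Rightarrow\left(1\right)$, let $B\in\mathcal{B}_{X}$. If $B=\varnothing$ then $\mathcal{F}\left(B\right)=\varnothing$ is bounded. Otherwise choose $x\in B$, so that $B\in\mathcal{BN}_{X}\left(x\right)$ and $\prescript{\ast}{}{B}\subseteq G_{X}\left(x\right)$; by $\left(2\right)$ there is $y\in Y$ with $\prescript{\ast}{}{\left(\mathcal{F}\left(B\right)\right)}=\prescript{\ast}{}{\mathcal{F}}\left(\prescript{\ast}{}{B}\right)\subseteq\prescript{\ast}{}{\mathcal{F}}\left(G_{X}\left(x\right)\right)\subseteq G_{Y}\left(y\right)$, and \prettyref{prop:nonst-characterisation-of-boundedness} (its condition $\left(4\right)$, whose second disjunct holds since $y\in Y$) gives that $\mathcal{F}\left(B\right)$ is bounded. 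Hence $\mathcal{F}$ is equibounded. I expect no genuine obstacle here; the only things to handle with care are the transfer identity $\prescript{\ast}{}{\mathcal{F}}\left(\prescript{\ast}{}{C}\right)=\prescript{\ast}{}{\left(\mathcal{F}\left(C\right)\right)}$ and the uniform choice of $y$ in the first implication.
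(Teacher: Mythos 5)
Your proof is correct and follows essentially the same route as the paper: in $\left(1\right)\Rightarrow\left(2\right)$ you fix a single $f_{0}\in\mathcal{F}$, take $y=f_{0}\left(x\right)$, observe that each $\mathcal{F}\left(B\right)$ for $B\in\mathcal{BN}_{X}\left(x\right)$ is a bounded set containing $y$, and take the union; in $\left(2\right)\Rightarrow\left(1\right)$ you dispose of $B=\varnothing$ and otherwise pick $x\in B$ and apply \prettyref{prop:nonst-characterisation-of-boundedness}, which is exactly what the paper does. The only cosmetic differences are that you spell out the transfer identity $\prescript{\ast}{}{\mathcal{F}}\left(\prescript{\ast}{}{C}\right)=\prescript{\ast}{}{\left(\mathcal{F}\left(C\right)\right)}$ and appeal directly to the definition of the galaxy rather than to condition $\left(2\right)$ of \prettyref{prop:nonst-characterisation-of-boundedness}, neither of which changes the substance.
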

\begin{proof}
$\left(1\right)\Rightarrow\left(2\right)$: Let $x\in X$. We fix
an $f\in\mathcal{F}$. Let $B\in\mathcal{BN}_{X}\left(x\right)$.
Since $\mathcal{F}\left(B\right)$ is bounded and $f\left(x\right)\in\mathcal{F}\left(B\right)$,
$\prescript{\ast}{}{\mathcal{F}}\left(\prescript{\ast}{}{B}\right)\subseteq G_{Y}\left(f\left(x\right)\right)$
holds by \prettyref{prop:nonst-characterisation-of-boundedness}.
Hence $\prescript{\ast}{}{\mathcal{F}}\left(G_{X}\left(x\right)\right)\subseteq G_{Y}\left(f\left(x\right)\right)$,
because $B$ was arbitrary.

$\left(2\right)\Rightarrow\left(1\right)$: Let $B$ be a bounded
set of $X$. We may assume that $B\neq\varnothing$. We fix an $x_{0}\in B$.
We can find a $y\in Y$ such that $\prescript{\ast}{}{\mathcal{F}}\left(G_{X}\left(x_{0}\right)\right)\subseteq G_{Y}\left(y\right)$.
Then, $\prescript{\ast}{}{\mathcal{F}}\left(\prescript{\ast}{}{B}\right)\subseteq G_{Y}\left(y\right)$.
By \prettyref{prop:nonst-characterisation-of-boundedness}, $\mathcal{F}\left(B\right)$
is bounded in $Y$. Therefore $\mathcal{F}$ is equibounded.
\end{proof}
\begin{cor}
\label{cor:nonst-char-equibounded}Let $X$ and $Y$ be prebornological
spaces and let $\mathcal{F}\subseteq Y^{X}$. If $\mathcal{F}$ is
equibounded, then $\prescript{\ast}{}{\mathcal{F}}$ maps $\mathrm{FIN}\left(X\right)$
to $\mathrm{FIN}\left(Y\right)$ as a multi-valued map. If $Y$ is
connected, the converse is also true.
\end{cor}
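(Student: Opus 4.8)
The plan is to read this corollary off from the characterisation of equiboundedness established in the theorem just above, together with \prettyref{prop:characterisation-of-bornological-connectedness}, in complete parallel with the way \prettyref{thm:nonst-characterisations-of-bornological-and-proper}(1) follows from \prettyref{thm:nonst-characterisation-of-bornological-map}. Before doing anything else I would record that if $\mathcal{F}=\varnothing$ then $\mathcal{F}$ is equibounded (since $\mathcal{F}(B)=\varnothing\in\mathcal{B}_{Y}$ for every $B$) and $\prescript{\ast}{}{\mathcal{F}}=\varnothing$, so both halves of the statement hold vacuously; likewise the cases $X=\varnothing$ or $Y=\varnothing$ are immediate. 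Thus in the main argument I may assume $X,Y,\mathcal{F}$ all nonempty, which is exactly what is needed to apply the preceding theorem.

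For the forward implication, assume $\mathcal{F}$ is equibounded, fix $\xi\in\mathrm{FIN}(X)$ and $g\in\prescript{\ast}{}{\mathcal{F}}$, and aim to show $g(\xi)\in\mathrm{FIN}(Y)$. By the definition of a finite point there is $x\in X$ with $\xi\in G_{X}(x)$, and by the preceding theorem there is $y\in Y$ with $\prescript{\ast}{}{\mathcal{F}}(G_{X}(x))\subseteq G_{Y}(y)$; since $g(\xi)$ belongs to $\prescript{\ast}{}{\mathcal{F}}(G_{X}(x))$, we get $g(\xi)\in G_{Y}(y)\subseteq\mathrm{FIN}(Y)$. As $\xi$ and $g$ were arbitrary, $\prescript{\ast}{}{\mathcal{F}}$ sends $\mathrm{FIN}(X)$ into $\mathrm{FIN}(Y)$ as a multi-valued map.

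For the converse, assume $Y$ is connected and $\prescript{\ast}{}{\mathcal{F}}(\mathrm{FIN}(X))\subseteq\mathrm{FIN}(Y)$; I would verify condition (2) of the preceding theorem. Fix $y_{0}\in Y$. By \prettyref{prop:characterisation-of-bornological-connectedness} the galaxy map of $Y$ is constant, so $\mathrm{FIN}(Y)=\bigcup_{y\in Y}G_{Y}(y)=G_{Y}(y_{0})$ (the same step used in the proof of \prettyref{thm:nonst-characterisations-of-bornological-and-proper}). Now let $x\in X$ be arbitrary. Since $G_{X}(x)\subseteq\mathrm{FIN}(X)$, we obtain $\prescript{\ast}{}{\mathcal{F}}(G_{X}(x))\subseteq\prescript{\ast}{}{\mathcal{F}}(\mathrm{FIN}(X))\subseteq\mathrm{FIN}(Y)=G_{Y}(y_{0})$, so condition (2) holds with the choice $y=y_{0}$. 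The preceding theorem then gives that $\mathcal{F}$ is equibounded.

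I do not expect a real obstacle: the proof is pure bookkeeping once the equiboundedness theorem and \prettyref{prop:characterisation-of-bornological-connectedness} are in hand. The only two places demanding a little care are the interpretation of ``$\prescript{\ast}{}{\mathcal{F}}$ maps $\mathrm{FIN}(X)$ to $\mathrm{FIN}(Y)$ as a multi-valued map'' --- which must be unpacked as ``$g(\xi)\in\mathrm{FIN}(Y)$ for all $g\in\prescript{\ast}{}{\mathcal{F}}$ and $\xi\in\mathrm{FIN}(X)$'', so that $\prescript{\ast}{}{\mathcal{F}}(\mathrm{FIN}(X))$ in the converse genuinely denotes $\bigcup_{\xi\in\mathrm{FIN}(X)}\prescript{\ast}{}{\mathcal{F}}(\xi)$ --- and the tracking of the nonemptiness hypotheses, so that the preceding theorem (stated for nonempty $\mathcal{F}$) and the connectedness characterisation are invoked legitimately.
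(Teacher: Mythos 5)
Your proof is correct and takes the natural route the paper intends: the forward direction unpacks condition~(2) of the preceding theorem using a witness $x$ with $\xi\in G_{X}(x)$, and the converse uses Proposition~\ref{prop:characterisation-of-bornological-connectedness} to replace $\mathrm{FIN}(Y)$ by a single galaxy $G_{Y}(y_{0})$, exactly mirroring the argument given for Theorem~\ref{thm:nonst-characterisations-of-bornological-and-proper}(1). The paper leaves the corollary without an explicit proof, and your bookkeeping (including the vacuous $\mathcal{F}=\varnothing$, $X=\varnothing$, $Y=\varnothing$ cases needed to invoke the theorem, which is stated for nonempty $\mathcal{F}$) fills that gap faithfully.
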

\begin{prop}[Standard]\label{prop:equibounded-family-of-linear-maps}
Let $X$ and $Y$ be normed vector spaces with the bounded bornologies.
For each $\mathcal{F}\subseteq Y^{X}$, $\mathcal{F}$ is equicontinuous
if and only if $\mathcal{F}$ is equibounded.
\end{prop}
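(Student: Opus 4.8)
The plan is to mirror, uniformly over the family, the argument for a single linear map just given: translate both ``equicontinuous'' and ``equibounded'' into nonstandard statements about the internal family $\prescript{\ast}{}{\mathcal{F}}$ near the origin, and then play the two rescaling tricks against each other. As in the preceding proposition, the members of $\mathcal{F}$ are linear maps, so by transfer every $g\in\prescript{\ast}{}{\mathcal{F}}$ is an internal linear map; in particular $g(0)=0$ and $g(\lambda x)=\lambda g(x)$ for every internal scalar $\lambda$ and every $x\in\prescript{\ast}{}{X}$.

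First I would set up two dictionaries. Because the maps are linear, equicontinuity of $\mathcal{F}$ is equivalent, by translation-invariance, to equicontinuity at $0$, which by the nonstandard characterisation of equicontinuity of a family of maps says: for every $x\in\mu_{X}(0)$ and every $g\in\prescript{\ast}{}{\mathcal{F}}$ one has $g(x)\in\mu_{Y}(0)$. On the other side, the bounded bornology of a normed space comes from a finite-valued metric, so $Y$ is connected (\prettyref{exa:bounded-bornology}); hence \prettyref{cor:nonst-char-equibounded} applies and gives: $\mathcal{F}$ is equibounded if and only if for every $x\in\mathrm{FIN}(X)$ and every $g\in\prescript{\ast}{}{\mathcal{F}}$ one has $g(x)\in\mathrm{FIN}(Y)$. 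I would also record, again from \prettyref{exa:bounded-bornology}, that $\mathrm{FIN}(X)=\set{x\in\prescript{\ast}{}{X}|\lVert x\rVert\text{ is finite}}$ and $\mu_{X}(0)=\set{x\in\prescript{\ast}{}{X}|\lVert x\rVert\text{ is infinitesimal}}$, and likewise for $Y$.

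Both implications are then contrapositives built on a single rescaling. For ``equibounded $\Rightarrow$ equicontinuous'': suppose $x\in\mu_{X}(0)$ and $g\in\prescript{\ast}{}{\mathcal{F}}$ satisfy $g(x)\notin\mu_{Y}(0)$. Then $x\neq0$ (otherwise $g(x)=0\in\mu_{Y}(0)$), so $\lVert x\rVert$ is a positive infinitesimal and $\lambda=\lVert x\rVert^{-1}$ is infinite; hence $\lVert\lambda x\rVert=1$, so $\lambda x\in\mathrm{FIN}(X)$, while $\lVert g(\lambda x)\rVert=\lambda\lVert g(x)\rVert$ is infinite (infinite times non-infinitesimal), so $g(\lambda x)\notin\mathrm{FIN}(Y)$, contradicting equiboundedness. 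For ``equicontinuous $\Rightarrow$ equibounded'': suppose $x\in\mathrm{FIN}(X)$ and $g\in\prescript{\ast}{}{\mathcal{F}}$ satisfy $g(x)\notin\mathrm{FIN}(Y)$. Put $\lambda=\lVert g(x)\rVert^{-1}$, an infinitesimal; then $\lVert\lambda x\rVert=\lambda\lVert x\rVert$ is infinitesimal (infinitesimal times finite), so $\lambda x\in\mu_{X}(0)$, while $\lVert g(\lambda x)\rVert=1$, so $g(\lambda x)\notin\mu_{Y}(0)$, contradicting equicontinuity at $0$.

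Everything is routine once \prettyref{cor:nonst-char-equibounded} and the description of the bounded bornology in \prettyref{exa:bounded-bornology} are in hand. The only points needing a touch of care are the appeal to transfer to carry linearity (and hence the identity $g(\lambda x)=\lambda g(x)$) over to the internal members of $\prescript{\ast}{}{\mathcal{F}}$, and the bookkeeping remark that the case $x=0$ must be excluded in the first implication; I do not foresee a genuine obstacle.
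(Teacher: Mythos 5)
Your proof is correct and takes essentially the same route as the paper's own: reduce to behaviour at $0$, translate both sides into nonstandard form via the nonstandard characterisation of equicontinuity and \prettyref{cor:nonst-char-equibounded}, and run the two rescaling tricks (dividing by $\left\Vert x\right\Vert$ in one direction and by $\left\Vert g\left(x\right)\right\Vert$ in the other) against each other. Your explicit exclusion of the $x=0$ case and your insistence that the members of $\mathcal{F}$ be linear are both correct refinements; the latter is precisely what the paper's corrigendum adds to the statement.
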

\begin{proof}
We only need to consider the behaviour around $0$. First, suppose
that $\mathcal{F}$ is not equibounded. By \prettyref{cor:nonst-char-equibounded},
we can choose an $x\in\mathrm{FIN}\left(X\right)$ and an $f\in\prescript{\ast}{}{\mathcal{F}}$
such that $f\left(x\right)\in\mathrm{INF}\left(Y\right)$. Let $\lambda=\left\Vert f\left(x\right)\right\Vert ^{-1}$.
Then, $\lambda x\in\mu_{X}\left(0\right)$ but $f\left(\lambda x\right)\notin\mu_{Y}\left(0\right)$.
By the nonstandard characterisation of equicontinuity (cf. \cite[4.4.6]{KK94}),
$\mathcal{F}$ is not equicontinuous at $0$.

Next, suppose that $\mathcal{F}$ is not equicontinuous at $0$. By
the nonstandard characterisation of equicontinuity, we can choose
an $x\in\mu_{X}\left(0\right)$ and an $f\in\prescript{\ast}{}{\mathcal{F}}$
such that $f\left(x\right)\notin\mu_{Y}\left(0\right)$. Now, let
$\lambda=\left\Vert x\right\Vert ^{-1}$. Then, $\lambda x\in\mathrm{FIN}\left(X\right)$
but $f\left(\lambda x\right)\notin\mathrm{FIN}\left(Y\right)$. By
\prettyref{cor:nonst-char-equibounded}, $\mathcal{F}$ is not equibounded.
\end{proof}

\subsection{Compatibility of topology and bornology}
\begin{defn}[Standard]
Let $X$ be a set. A topology and a prebornology on $X$ are said
to be \emph{compatible} if $X$ is locally bounded, i.e., each point
of $X$ has a bounded neighbourhood.
\end{defn}
\begin{thm}
\label{thm:compatibility-of-top-and-bor}Let $X$ be a topological
space with a prebornology. Let $x\in X$. The following are equivalent:
\begin{enumerate}
\item $X$ is locally bounded at $x$;
\item $\mu_{X}\left(x\right)\subseteq G_{X}\left(x\right)$.
\end{enumerate}
\end{thm}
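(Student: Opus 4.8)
The plan is to unwind each side and bridge them with the Bornological Approximation Lemma and saturation.

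For $\left(1\right)\Rightarrow\left(2\right)$, which needs no saturation: if $X$ is locally bounded at $x$, choose a bounded neighbourhood of $x$; by downward closedness of $\mathcal{B}_{X}$ we may shrink it to an open set $U$ with $x\in U\in\mathcal{B}_{X}$. From the definition of the monad, $\mu_{X}\left(x\right)\subseteq\prescript{\ast}{}{U}$. Since $x\in U$ and $U$ is bounded, $U\in\mathcal{BN}_{X}\left(x\right)$, so $\prescript{\ast}{}{U}\subseteq\bigcup_{B\in\mathcal{BN}_{X}\left(x\right)}\prescript{\ast}{}{B}=G_{X}\left(x\right)$. Chaining the two inclusions gives $\mu_{X}\left(x\right)\subseteq G_{X}\left(x\right)$.

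For $\left(2\right)\Rightarrow\left(1\right)$, assume $\mu_{X}\left(x\right)\subseteq G_{X}\left(x\right)$. By \prettyref{lem:bornological-approximation-lemma} there is an internal $C\in\prescript{\ast}{}{\mathcal{B}_{X}}$ with $G_{X}\left(x\right)\subseteq C$, so $\mu_{X}\left(x\right)\subseteq C$. The key move is to pull this external inclusion back to a standard open neighbourhood: the family $\set{\prescript{\ast}{}{U}|x\in U\in\mathcal{T}_{X}}\cup\set{\prescript{\ast}{}{X}\setminus C}$ of internal sets has empty intersection (that is precisely $\mu_{X}\left(x\right)\subseteq C$), so by saturation some finite subfamily already has empty intersection; any finite collection of the sets $\prescript{\ast}{}{U}$ contains $x$, so the offending finite subfamily must include $\prescript{\ast}{}{X}\setminus C$, and since $\mathcal{T}_{X}$ is closed under finite intersection we obtain a single open $U$ with $x\in U$ and $\prescript{\ast}{}{U}\subseteq C$. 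Now transfer the downward closedness of $\mathcal{B}_{X}$: $\prescript{\ast}{}{U}\subseteq C\in\prescript{\ast}{}{\mathcal{B}_{X}}$ forces $\prescript{\ast}{}{U}\in\prescript{\ast}{}{\mathcal{B}_{X}}$, and since $U$ is standard this gives $U\in\mathcal{B}_{X}$. Thus $U$ is a bounded open neighbourhood of $x$, so $X$ is locally bounded at $x$.

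The main obstacle is exactly the saturation step in $\left(2\right)\Rightarrow\left(1\right)$: converting the set-theoretic containment $\mu_{X}\left(x\right)\subseteq G_{X}\left(x\right)$ of two externally-defined sets into the existence of one genuine standard bounded open neighbourhood. Everything else — the direction $\left(1\right)\Rightarrow\left(2\right)$, the transfer of downward closedness, and the invocation of the already-proved \prettyref{lem:bornological-approximation-lemma} — is routine bookkeeping with the definitions.
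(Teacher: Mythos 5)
Your proof is correct, and both directions use the same main ingredients as the paper's argument: the nonstandard characterisation of openness for $(1)\Rightarrow(2)$, and the Bornological Approximation Lemma plus a saturation argument for $(2)\Rightarrow(1)$. The one place you diverge technically is in how you close out $(2)\Rightarrow(1)$. The paper, after obtaining an internal bounded $B\supseteq G_X(x)$, invokes Robinson's internalisation of the monad (Theorem 4.1.2 of \cite{Rob66}) to produce an \emph{internal} open set $U\in\prescript{\ast}{}{\mathcal{O}_X}$ with $x\in U\subseteq\mu_X(x)\subseteq B$, concluding that $B$ is an internal bounded neighbourhood of $x$, and then transfers the existence statement $\exists B\in\mathcal{B}_X\,\exists U\in\mathcal{O}_X\,(x\in U\subseteq B)$ downward. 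You instead apply saturation to the family $\set{\prescript{\ast}{}{U}|x\in U\in\mathcal{T}_X}\cup\set{\prescript{\ast}{}{X}\setminus C}$ to extract a \emph{standard} open $U\ni x$ with $\prescript{\ast}{}{U}\subseteq C$ directly, and then transfer downward closedness. These are two sides of the same coin: the paper internalises the monad, you externalise the internal bound; both ultimately rest on $|\mathbb{U}|^{+}$-saturation. Your route has the slight merit of being self-contained (no need to quote Robinson's theorem and it lands on a standard set directly), while the paper's is shorter at the cost of citing an auxiliary result. One tiny point worth being pedantic about: in your saturation step the finite subfamily with empty intersection could a priori consist of $\prescript{\ast}{}{X}\setminus C$ alone; this forces $C=\prescript{\ast}{}{X}$, whence $X$ itself is bounded and serves as the neighbourhood, so there is no gap, but the degenerate case is worth a glance.
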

\begin{proof}
$\left(1\right)\Rightarrow\left(2\right)$: There exists a bounded
open neighbourhood $N$ of $x$. By the nonstandard characterisation
of openness (cf. \cite[Theorem 4.1.4]{Rob66}), we know that $\mu_{X}\left(x\right)\subseteq\prescript{\ast}{}{N}$.
By \prettyref{prop:nonst-characterisation-of-boundedness}, $\prescript{\ast}{}{N}\subseteq G_{X}\left(x\right)$.
Hence $\mu_{X}\left(x\right)\subseteq G_{X}\left(x\right)$.

$\left(2\right)\Rightarrow\left(1\right)$: By \prettyref{lem:bornological-approximation-lemma},
there exists a $B\in\prescript{\ast}{}{\mathcal{B}_{X}}$  such that
$G_{X}\left(x\right)\subseteq B$. By \cite[Theorem 4.1.2]{Rob66},
there exists a $U\in\prescript{\ast}{}{\mathcal{O}_{X}}$ such that
$x\in U\subseteq\mu_{X}\left(x\right)$. We have that $U\subseteq B$.
Hence $B$ is an internal bounded neighbourhood of $x$. By transfer,
$x$ has a bounded neighbourhood.
\end{proof}
\begin{cor}[{cf. Stroyan and Luxemburg \cite[Theorem 8.3.14]{SL76}}]
\label{cor:nonst-characterisation-of-local-compactness}Let $X$
be a topological space equipped with the compact bornology. The following
are equivalent:
\begin{enumerate}
\item $X$ is locally compact;
\item $G_{X}\left(x\right)=\mathrm{NS}\left(X\right)$ for all $x\in X$;
\item $\mathrm{FIN}\left(X\right)=\mathrm{NS}\left(X\right)$.
\end{enumerate}
\end{cor}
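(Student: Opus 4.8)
The plan is to reduce everything to the compatibility criterion \prettyref{thm:compatibility-of-top-and-bor} applied to the compact bornology, after recording two elementary observations. First, the compact bornology is connected: every finite subset of $X$ is compact, hence bounded, so \prettyref{prop:characterisation-of-bornological-connectedness} applies. Consequently the galaxy map is constant, $G_X\left(x\right)=\bigcup_{y\in X}G_X\left(y\right)=\mathrm{FIN}\left(X\right)$ for every $x\in X$ (the case $X=\varnothing$ being vacuous). In particular conditions (2) and (3) each say exactly $\mathrm{FIN}\left(X\right)=\mathrm{NS}\left(X\right)$, so $\left(2\right)\Leftrightarrow\left(3\right)$ is immediate and it only remains to link this with $\left(1\right)$. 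Second, as already noted in the discussion of the compact bornology, $G_X\left(x\right)\subseteq\mathrm{NS}\left(X\right)$ for every $x$, so the inclusion $\mathrm{FIN}\left(X\right)\subseteq\mathrm{NS}\left(X\right)$ always holds; only its converse is at issue.

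I would then prove $\left(1\right)\Rightarrow\left(2\right)$. Note that $X$ is locally compact precisely when it is locally bounded with respect to the compact bornology, since a compact neighbourhood is a bounded one and, conversely, a neighbourhood contained in a compact set is bounded. Hence \prettyref{thm:compatibility-of-top-and-bor} gives $\mu_X\left(x\right)\subseteq G_X\left(x\right)$ for all $x\in X$. Taking the union over $x$ yields $\mathrm{NS}\left(X\right)\subseteq\mathrm{FIN}\left(X\right)$, which together with the reverse inclusion from the first paragraph gives $\mathrm{NS}\left(X\right)=\mathrm{FIN}\left(X\right)=G_X\left(x\right)$ for every $x$, i.e.\ (2).

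Finally $\left(3\right)\Rightarrow\left(1\right)$: from $\mathrm{FIN}\left(X\right)=\mathrm{NS}\left(X\right)$ and the constancy $G_X\left(x\right)=\mathrm{FIN}\left(X\right)$ we obtain $\mu_X\left(x\right)\subseteq\mathrm{NS}\left(X\right)=G_X\left(x\right)$ for every $x$, so \prettyref{thm:compatibility-of-top-and-bor} shows $X$ is locally bounded at each point with respect to the compact bornology, that is, $X$ is locally compact. This closes the cycle $\left(1\right)\Rightarrow\left(2\right)\Leftrightarrow\left(3\right)\Rightarrow\left(1\right)$.

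Since the substance is inherited from \prettyref{thm:compatibility-of-top-and-bor}, there is no real obstacle here; the only things that need attention are the observation that the compact bornology is connected --- which is what makes the galaxy map point-independent and forces (2) and (3) to coincide --- and the harmless identification of local compactness with local boundedness for the compact bornology. Alternatively one could present $\left(1\right)\Leftrightarrow\left(2\right)$ directly from \prettyref{thm:compatibility-of-top-and-bor} together with the two inclusions, and $\left(2\right)\Leftrightarrow\left(3\right)$ from connectedness, but the cyclic arrangement is slightly shorter.
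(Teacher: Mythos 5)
Your proof is correct and follows the route the paper clearly intends (the paper states this as a corollary of \prettyref{thm:compatibility-of-top-and-bor} with no proof written out): the three ingredients you isolate --- the compact bornology is connected, hence the galaxy map is constant and equal to $\mathrm{FIN}\left(X\right)$ by \prettyref{prop:characterisation-of-bornological-connectedness}; the inclusion $\mathrm{FIN}\left(X\right)\subseteq\mathrm{NS}\left(X\right)$ recorded in the compact-bornology example; and the pointwise criterion $\mu_{X}\left(x\right)\subseteq G_{X}\left(x\right)$ from \prettyref{thm:compatibility-of-top-and-bor} --- are exactly what make the corollary immediate, and your identification of local compactness with local boundedness for the compact bornology is the harmless translation required.
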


\begin{cor}
Let $X$ and $Y$ be locally compact topological spaces with the compact
bornologies. Let $f\colon X\to Y$ be a map.
\begin{enumerate}
\item $f$ is bornological if and only if $\prescript{\ast}{}{f}$ maps
nearstandard points to nearstandard points.
\item $f$ is proper if and only if $\prescript{\ast}{}{f}$ maps remote
points to remote points.
\end{enumerate}
\end{cor}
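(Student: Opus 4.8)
The plan is to deduce both equivalences from \prettyref{thm:nonst-characterisations-of-bornological-and-proper} and \prettyref{cor:nonst-characterisation-of-local-compactness}, after one preliminary observation. First I would record that the compact bornology $\mathcal{P}_{c}$ on any topological space is connected in the sense of the earlier definition: a finite subset of a topological space is itself compact, hence bounded, so every finite set belongs to $\mathcal{P}_{c}$. Consequently $X$ and $Y$ are connected prebornological spaces, and the connectedness hypotheses that guard the converse implications in \prettyref{thm:nonst-characterisations-of-bornological-and-proper} are automatically fulfilled here.

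For part (1), I would apply clause (1) of \prettyref{thm:nonst-characterisations-of-bornological-and-proper}: since $Y$ is connected, $f$ is bornological if and only if $\prescript{\ast}{}{f}$ sends $\mathrm{FIN}\left(X\right)$ into $\mathrm{FIN}\left(Y\right)$. Local compactness of $X$ and of $Y$ together with \prettyref{cor:nonst-characterisation-of-local-compactness} give $\mathrm{FIN}\left(X\right)=\mathrm{NS}\left(X\right)$ and $\mathrm{FIN}\left(Y\right)=\mathrm{NS}\left(Y\right)$, so the condition becomes $\prescript{\ast}{}{f}\left(\mathrm{NS}\left(X\right)\right)\subseteq\mathrm{NS}\left(Y\right)$, i.e.\ $\prescript{\ast}{}{f}$ carries nearstandard points to nearstandard points.

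For part (2), I would instead use clause (4) of \prettyref{thm:nonst-characterisations-of-bornological-and-proper}: since $X$ is connected, $f$ is proper if and only if $\prescript{\ast}{}{f}$ maps $\mathrm{INF}\left(X\right)$ into $\mathrm{INF}\left(Y\right)$. On both spaces $\mathrm{FIN}$ coincides with $\mathrm{NS}$, so $\mathrm{INF}\left(X\right)=\prescript{\ast}{}{X}\setminus\mathrm{NS}\left(X\right)$ and $\mathrm{INF}\left(Y\right)=\prescript{\ast}{}{Y}\setminus\mathrm{NS}\left(Y\right)$ are precisely the sets of remote points of $X$ and of $Y$. Hence $f$ is proper if and only if $\prescript{\ast}{}{f}$ carries remote points to remote points.

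The argument is a short chain of substitutions, so I do not expect a genuine obstacle; the only step that needs a moment's care is checking that $\mathcal{P}_{c}$ is always bornologically connected, which is what lets the connectedness clauses of \prettyref{thm:nonst-characterisations-of-bornological-and-proper} be discharged without extra hypotheses, together with keeping track of which of its two clauses (the one requiring $Y$ connected versus the one requiring $X$ connected) is relevant to each of the two statements.
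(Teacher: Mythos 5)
Your proposal is correct and follows exactly the route the paper intends (the corollary is stated without a displayed proof, being an immediate consequence of \prettyref{thm:nonst-characterisations-of-bornological-and-proper} and \prettyref{cor:nonst-characterisation-of-local-compactness}). Your preliminary observation that the compact bornology is connected is the right thing to check so that the converse clauses of \prettyref{thm:nonst-characterisations-of-bornological-and-proper} apply; one could also note that the paper already calls $\mathcal{P}_{c}$ a \emph{bornology} rather than a prebornology, and by \prettyref{prop:characterisation-of-bornological-connectedness} a prebornology is a bornology exactly when it is connected, which dispatches the same point.
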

\begin{defn}[Standard]
Let $X$ be a topological space and let $Y$ be a bornological space.
A map $f\colon X\to Y$ is \emph{locally bounded} at $x\in X$ if
there is a neighbourhood $N$ of $x$ such that $f\left(N\right)$
is bounded.
\end{defn}
We can easily obtain the following characterisation of this notion.
\begin{thm}
Let $X,Y,f,x$ be the same as above. The following are equivalent:
\begin{enumerate}
\item $f$ is locally bounded at $x$;
\item $\prescript{\ast}{}{f}\left(\mu_{X}\left(x\right)\right)\subseteq G_{Y}\left(f\left(x\right)\right)$.
\end{enumerate}
\end{thm}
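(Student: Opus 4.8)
The plan is to mirror the proof of \prettyref{thm:compatibility-of-top-and-bor}, of which this statement is essentially the ``map version''. For $(1)\Rightarrow(2)$, I would start from a neighbourhood $N$ of $x$ with $f(N)$ bounded; it contains an open $U\ni x$, and since $f(U)\subseteq f(N)\in\mathcal{B}_Y$ and $\mathcal{B}_Y$ is downward closed, $f(U)$ is bounded as well. Directly from the definition of the monad we have $\mu_X(x)\subseteq\prescript{\ast}{}{U}$, hence $\prescript{\ast}{}{f}(\mu_X(x))\subseteq\prescript{\ast}{}{f}(\prescript{\ast}{}{U})=\prescript{\ast}{}{(f(U))}$. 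Since $f(U)$ is bounded and contains $f(x)$, \prettyref{prop:nonst-characterisation-of-boundedness} gives $\prescript{\ast}{}{(f(U))}\subseteq G_Y(f(x))$, and chaining the inclusions yields (2).

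For $(2)\Rightarrow(1)$ I would repeat the argument of $(2)\Rightarrow(1)$ in \prettyref{thm:compatibility-of-top-and-bor} almost verbatim, with $\prescript{\ast}{}{f}$ inserted where needed. Apply the \nameref{lem:bornological-approximation-lemma} to obtain $B\in\prescript{\ast}{}{\mathcal{B}_Y}$ with $G_Y(f(x))\subseteq B$, and \cite[Theorem 4.1.2]{Rob66} to obtain an internal open $U$ with $x\in U\subseteq\mu_X(x)$. Then $\prescript{\ast}{}{f}(U)\subseteq\prescript{\ast}{}{f}(\mu_X(x))\subseteq G_Y(f(x))\subseteq B$, so $U$ is an internal open neighbourhood of $x$ whose $\prescript{\ast}{}{f}$-image lies inside an internal bounded set, hence (by the downward closedness of $\prescript{\ast}{}{\mathcal{B}_Y}$, which holds by transfer) is itself internally bounded. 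Thus the statement ``there exists an open $V\ni x$ with $f(V)$ bounded'' holds in $\prescript{\ast}{}{\mathbb{U}}$, and since $x$ is standard, transfer brings it back to $\mathbb{U}$: $f$ is locally bounded at $x$.

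The only delicate point is the final transfer: one must formulate local boundedness at the fixed standard point $x$ as a first-order formula having $x$ as a parameter, namely $\exists V\,(V\in\mathcal{O}_X\wedge x\in V\wedge f(V)\in\mathcal{B}_Y)$, so that its $\ast$-transform is exactly what the internal witness $U$ verifies. Beyond this bookkeeping there is no real obstacle; the theorem is a routine adaptation of the already-proved \prettyref{thm:compatibility-of-top-and-bor}.
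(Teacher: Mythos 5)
Your proof is correct, and since the paper omits the argument (stating only that the characterisation is ``easily obtained''), you have supplied exactly the straightforward adaptation of the proof of \prettyref{thm:compatibility-of-top-and-bor} that the author evidently had in mind: the same use of the nonstandard characterisation of openness together with \prettyref{prop:nonst-characterisation-of-boundedness} for $(1)\Rightarrow(2)$, and the same pairing of the \nameref{lem:bornological-approximation-lemma} with Robinson's internal-open-set theorem followed by a downward transfer for $(2)\Rightarrow(1)$. Your closing remark about formulating local boundedness at the standard point $x$ as a first-order formula with $x$ as a parameter is the right bookkeeping and is implicit in the analogous transfer step of the paper's Theorem \ref{thm:compatibility-of-top-and-bor}.
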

\begin{cor}[Standard]
Let $X$ and $Y$ be topological spaces. Suppose that $Y$ is equipped
with a compatible bornology. If a map $f\colon X\to Y$ is continuous
at $x\in X$, then $f$ is locally bounded at $x$. 
\end{cor}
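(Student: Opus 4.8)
The plan is to deduce the statement immediately from the nonstandard characterisation of local boundedness established in the theorem just above, combined with the nonstandard characterisation of continuity and the compatibility criterion \prettyref{thm:compatibility-of-top-and-bor}. So this will be a short chain of inclusions rather than a substantive argument.

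First I would invoke the nonstandard characterisation of continuity (cf. \cite[Theorem 4.2.7]{Rob66}): since $f$ is continuous at $x$, we have $\prescript{\ast}{}{f}\left(\mu_{X}\left(x\right)\right)\subseteq\mu_{Y}\left(f\left(x\right)\right)$. Next, because the bornology on $Y$ is compatible with its topology, $Y$ is locally bounded at every point, in particular at $f\left(x\right)$; hence \prettyref{thm:compatibility-of-top-and-bor} yields $\mu_{Y}\left(f\left(x\right)\right)\subseteq G_{Y}\left(f\left(x\right)\right)$. Composing the two inclusions gives $\prescript{\ast}{}{f}\left(\mu_{X}\left(x\right)\right)\subseteq G_{Y}\left(f\left(x\right)\right)$, which is precisely condition (2) of the preceding theorem; therefore $f$ is locally bounded at $x$.

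There is essentially no obstacle: the result is a one-line consequence once the two characterisation theorems are in place, and the only thing to check is that ``compatible'' supplies local boundedness of $Y$ at the specific point $f\left(x\right)$, which is immediate. For completeness one could also note the elementary standard argument — pick a bounded neighbourhood $M$ of $f\left(x\right)$ by compatibility, use continuity to find a neighbourhood $N$ of $x$ with $f\left(N\right)\subseteq M$, and conclude that $f\left(N\right)$ is bounded by downward closedness of the bornology — but the nonstandard route reuses the machinery already developed and is the one I would present.
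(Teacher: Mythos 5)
Your proposal is correct and follows exactly the same route as the paper's proof: nonstandard continuity gives $\prescript{\ast}{}{f}\left(\mu_{X}\left(x\right)\right)\subseteq\mu_{Y}\left(f\left(x\right)\right)$, compatibility via \prettyref{thm:compatibility-of-top-and-bor} gives $\mu_{Y}\left(f\left(x\right)\right)\subseteq G_{Y}\left(f\left(x\right)\right)$, and composing the two inclusions invokes the preceding characterisation of local boundedness. The extra remark sketching the purely standard argument is a harmless addition but does not change the substance.
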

\begin{proof}
By the nonstandard characterisation of continuity, we know that $\prescript{\ast}{}{f}\left(\mu_{X}\left(x\right)\right)\subseteq\mu_{Y}\left(f\left(x\right)\right)$.
By \prettyref{thm:compatibility-of-top-and-bor}, we have that $\prescript{\ast}{}{f}\left(\mu_{X}\left(x\right)\right)\subseteq\mu_{Y}\left(f\left(x\right)\right)\subseteq G_{Y}\left(f\left(x\right)\right)$.
Thus the above characterisation completes the proof.
\end{proof}

\subsection{Vector topology and vector bornology}

Throughout this subsection, we let the underlying field $\mathbb{K}=\mathbb{R}\text{ or }\mathbb{C}$, and assume that $\mathbb{K}$ is equipped with the usual metric topology and the bounded bornology (see \prettyref{exa:bounded-bornology}).
\begin{example}[Standard]
A vector space $X$ over $\mathbb{K}$ equipped with a topology is
called a \emph{topological vector space} if the addition and the scalar
multiplication are both continuous. A subset $B$ of $X$ is called
\emph{von Neumann bounded} if for each open neighbourhood $U$ of
$0$ there is an $r\in\mathbb{K}$ such that $B\subseteq rU$.
\end{example}
\begin{thm}
\label{thm:nonst-char-vN-bounded}Let $X$ be a topological vector
space. For each subset $B$ of $X$, the following are equivalent:
\begin{enumerate}
\item $B$ is von Neumann bounded;
\item there is an $r\in\prescript{\ast}{}{\mathbb{K}}$ such that $\prescript{\ast}{}{B}\subseteq r\mu_{X}\left(0\right)$.
\end{enumerate}
\end{thm}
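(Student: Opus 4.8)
The plan is to prove both implications directly from the nonstandard characterisation of the neighbourhood filter at $0$, i.e. that $\mu_{X}(0) = \bigcap\set{\prescript{\ast}{}{U}|U\text{ open neighbourhood of }0}$, together with transfer. The asymmetry of the statement (a single $r$ works on the left, while on the right $r$ lies in $\prescript{\ast}{}{\mathbb{K}}$) suggests that one direction is a routine transfer argument and the other requires a saturation/overspill step to ``compress'' the standard-indexed family of $r$'s into one internal $r$.

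First I would treat $(1)\Rightarrow(2)$. Assume $B$ is von Neumann bounded. The idea is to apply \prettyref{lem:bornological-approximation-lemma}-style reasoning to the monad $\mu_{X}(0)$: by definition of monad, $\mu_{X}(0)$ is an intersection of internal sets $\prescript{\ast}{}{U}$ over open neighbourhoods $U$ of $0$, and since the family of such $U$ is downward directed under intersection, weak saturation (the enlargement property) gives an internal open neighbourhood $V\in\prescript{\ast}{}{\mathcal{O}_{X}}$ of $0$ with $V\subseteq\prescript{\ast}{}{U}$ for every standard $U$; that is, $V\subseteq\mu_{X}(0)$ (this is exactly \cite[Theorem 4.1.2]{Rob66}, already invoked in the proof of \prettyref{thm:compatibility-of-top-and-bor}). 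Now for each standard open neighbourhood $U$ of $0$ there is, by hypothesis, a standard $r_{U}\in\mathbb{K}$ with $B\subseteq r_{U}U$, hence $\prescript{\ast}{}{B}\subseteq r_{U}\cdot\prescript{\ast}{}{U}$ by transfer. Applying this to a fixed $U$ with $\prescript{\ast}{}{U}\supseteq V$ and combining gives $\prescript{\ast}{}{B}\subseteq r_{U}\cdot\prescript{\ast}{}{U}$; but I want the bound in terms of $\mu_{X}(0)$, so instead I would argue internally: the internal statement ``$\exists r\in\prescript{\ast}{}{\mathbb{K}}$ such that $\prescript{\ast}{}{B}\subseteq rV$'' follows since $V$ is an internal neighbourhood of $0$ and $\prescript{\ast}{}{B}$ is internal — more carefully, apply transfer to the standard truth ``for the open neighbourhood $U$, $\exists r\in\mathbb{K},\ B\subseteq rU$'' to get $r\in\prescript{\ast}{}{\mathbb{K}}$ with $\prescript{\ast}{}{B}\subseteq r\,\prescript{\ast}{}{U}$, then shrink: replace $U$ by the internal $V\subseteq\mu_{X}(0)$ using that the scalar multiplication map is internal and $V$ absorbs small sets. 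The cleanest route is: pick $V\subseteq\mu_{X}(0)$ internal open nbhd of $0$ as above; by transfer of ``every open nbhd of $0$ contains a scaled copy absorbing $B$'' — more precisely transfer the statement $\forall U\exists r\,(B\subseteq rU)$ to $\prescript{\ast}{}{\mathbb{K}}$, instantiate at $U:=V$ — obtain $r\in\prescript{\ast}{}{\mathbb{K}}$ with $\prescript{\ast}{}{B}\subseteq rV\subseteq r\mu_{X}(0)$ when $|r|\ge 1$, and note $r\mu_X(0)\supseteq rV$ requires care with $|r|<1$, handled by enlarging $r$.

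For $(2)\Rightarrow(1)$, assume $\prescript{\ast}{}{B}\subseteq r\mu_{X}(0)$ for some $r\in\prescript{\ast}{}{\mathbb{K}}$, and let $U$ be a standard open neighbourhood of $0$; I must produce a standard $s\in\mathbb{K}$ with $B\subseteq sU$. Since $\mu_{X}(0)\subseteq\prescript{\ast}{}{U}$ by definition of monad, we get $\prescript{\ast}{}{B}\subseteq r\,\prescript{\ast}{}{U} = \prescript{\ast}{}{(rU)}$ — wait, $r$ is nonstandard, so $rU$ is not standard; instead write $\prescript{\ast}{}{B}\subseteq\prescript{\ast}{}{U'}$ where $U'$ is a standard neighbourhood absorbing the relevant scaled set. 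The right move: the internal set $\set{t\in\prescript{\ast}{}{\mathbb{K}}|\prescript{\ast}{}{B}\subseteq t\,\prescript{\ast}{}{U}}$ is nonempty (it contains $r$, since $r\mu_X(0)\subseteq r\,\prescript{\ast}{}{U}$), hence by transfer (underspill/the internal statement is the $\ast$ of a standard one) the standard set $\set{t\in\mathbb{K}|B\subseteq tU}$ is nonempty, giving a standard $s$ with $B\subseteq sU$. Since $U$ was arbitrary, $B$ is von Neumann bounded. This direction is essentially a one-line transfer once the monad inclusion $\mu_{X}(0)\subseteq\prescript{\ast}{}{U}$ is used.

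The main obstacle is the bookkeeping in $(1)\Rightarrow(2)$: getting from ``for every standard $U$ there is a standard $r_U$'' to ``there is a single internal $r$ that works against the monad''. The right tool is the internal definition principle plus the observation that $\mu_{X}(0)$, while external, is contained in some internal neighbourhood $V$ (by weak saturation, as in \cite[Theorem 4.1.2]{Rob66}), so that one only needs the absorption property for that single internal $V$, which is itself an instance of the transferred statement $\forall U\,\exists r\,(B\subseteq rU)$. I would also note that the case $|r|<1$ is harmless: if $\prescript{\ast}{}{B}\subseteq rV$ with $V\subseteq\mu_X(0)$ and $\mu_X(0)$ is balanced (true in a TVS, since $\mu_X(0)$ is an intersection of $\ast$-balanced neighbourhoods), then $\prescript{\ast}{}{B}\subseteq r\mu_X(0)\subseteq\mu_X(0)$ already, and any $r$ with $|r|\ge 1$ works a fortiori.
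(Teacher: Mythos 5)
Your argument is essentially the paper's: for $(1)\Rightarrow(2)$, pick an internal open $V\subseteq\mu_X(0)$ via weak saturation (\cite[Theorem 4.1.2]{Rob66}), transfer $\forall U\,\exists r\,(B\subseteq rU)$ and instantiate at $V$; for $(2)\Rightarrow(1)$, use $\mu_X(0)\subseteq\prescript{\ast}{}U$ and transfer the internal witness back to a standard one. The only thing to correct is the detour about $\lvert r\rvert<1$: the inclusion $rV\subseteq r\mu_X(0)$ is just monotonicity of the direct image under the internal map $x\mapsto rx$ applied to $V\subseteq\mu_X(0)$, and holds for every $r\in\prescript{\ast}{}{\mathbb{K}}$ (including $r=0$), so no case analysis, no balancedness of the monad, and no ``enlarging $r$'' are needed.
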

\begin{proof}
$\left(1\right)\Rightarrow\left(2\right)$: Using \cite[Theorem 4.1.2]{Rob66}
we can find an internal open neighbourhood $U$ of $0$ such that
$U\subseteq\mu_{X}\left(0\right)$. By transfer, there is an $r\in\prescript{\ast}{}{\mathbb{K}}$
such that $\prescript{\ast}{}{B}\subseteq rU$. Hence $\prescript{\ast}{}{B}\subseteq r\mu_{X}\left(0\right)$.

$\left(2\right)\Rightarrow\left(1\right)$: Let $U$ be an arbitrary
open neighbourhood of $0$. By the nonstandard characterisation of
openness, we have that $\mu_{X}\left(0\right)\subseteq\prescript{\ast}{}{U}$.
Hence $\prescript{\ast}{}{B}\subseteq r\prescript{\ast}{}{U}$. By
transfer, there is an $r'\in\mathbb{K}$ such that $B\subseteq r'U$.
Hence $B$ is von Neumann bounded.
\end{proof}
\begin{defn}[Standard]
A vector space $X$ over $\mathbb{K}$ equipped with a bornology
is called a\emph{ bornological vector space} if the addition and the
scalar multiplication are both bornological. A subset $U$ of $X$
is called \emph{bornivorous} if for each bounded set $B$ there is
an $r\in\mathbb{K}$ such that $B\subseteq rU$.
\end{defn}
\begin{thm}
\label{thm:nonst-char-of-bornivorous}Let $X$ be a bornological vector
space. For each subset $U$ of $X$, the following are equivalent:
\begin{enumerate}
\item $U$ is bornivorous;
\item there is an $r\in\prescript{\ast}{}{\mathbb{K}}$ such that $G_{X}\left(0\right)\subseteq r\prescript{\ast}{}{U}$.
\end{enumerate}
\end{thm}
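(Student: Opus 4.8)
The plan is to mirror the proof of \prettyref{thm:nonst-char-vN-bounded}, replacing the monad $\mu_{X}\left(0\right)$ by the galaxy $G_{X}\left(0\right)$ and the nonstandard characterisation of openness by the \nameref{lem:bornological-approximation-lemma}. Throughout, $r\prescript{\ast}{}{U}$ is to be read as the internal scalar multiple of $\prescript{\ast}{}{U}$ inside the internal $\prescript{\ast}{}{\mathbb{K}}$-vector space $\prescript{\ast}{}{X}$, so that transfer applies verbatim.

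For $\left(1\right)\Rightarrow\left(2\right)$ I would first apply the \nameref{lem:bornological-approximation-lemma} at the point $0$ to obtain an internal bounded set $B\in\prescript{\ast}{}{\mathcal{B}_{X}}$ with $G_{X}\left(0\right)\subseteq B$. Next, I would transfer the defining property of bornivorousness, namely ``for every $B'\in\mathcal{B}_{X}$ there is an $r\in\mathbb{K}$ with $B'\subseteq rU$'', to the internal statement ``for every $B'\in\prescript{\ast}{}{\mathcal{B}_{X}}$ there is an $r\in\prescript{\ast}{}{\mathbb{K}}$ with $B'\subseteq r\prescript{\ast}{}{U}$''. Applying this to $B$ yields $r\in\prescript{\ast}{}{\mathbb{K}}$ with $G_{X}\left(0\right)\subseteq B\subseteq r\prescript{\ast}{}{U}$, which is (2).

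For $\left(2\right)\Rightarrow\left(1\right)$ I would take an arbitrary bounded subset $B$ of $X$ and produce $r\in\mathbb{K}$ with $B\subseteq rU$. The case $B=\varnothing$ is trivial (take $r=1$), so assume $B\neq\varnothing$ and fix $x\in B$. Since the bornology of a bornological vector space is a bornology, $X$ is connected, so $G_{X}\left(x\right)=G_{X}\left(0\right)$ by \prettyref{prop:characterisation-of-bornological-connectedness}, while $\prescript{\ast}{}{B}\subseteq G_{X}\left(x\right)$ by \prettyref{prop:nonst-characterisation-of-boundedness}. Combining these with (2) gives $\prescript{\ast}{}{B}\subseteq G_{X}\left(0\right)\subseteq r\prescript{\ast}{}{U}$ for some $r\in\prescript{\ast}{}{\mathbb{K}}$, so the internal statement ``$\exists r\in\prescript{\ast}{}{\mathbb{K}}\colon\prescript{\ast}{}{B}\subseteq r\prescript{\ast}{}{U}$'' holds; by (downward) transfer, ``$\exists r\in\mathbb{K}\colon B\subseteq rU$'' holds. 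As $B$ was arbitrary, $U$ is bornivorous.

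There is no serious obstacle here. The only point requiring a little care is the passage through connectedness in $\left(2\right)\Rightarrow\left(1\right)$: hypothesis (2) controls $U$ only relative to the single galaxy $G_{X}\left(0\right)$, so one must use \prettyref{prop:characterisation-of-bornological-connectedness} to identify $G_{X}\left(x\right)$ with $G_{X}\left(0\right)$ before invoking \prettyref{prop:nonst-characterisation-of-boundedness}, together with the book-keeping that $r\prescript{\ast}{}{U}$ is the internal scalar multiple.
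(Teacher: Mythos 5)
Your proposal is correct and follows essentially the same route as the paper: in $\left(1\right)\Rightarrow\left(2\right)$ both use the \nameref{lem:bornological-approximation-lemma} at $0$ followed by transfer of the bornivorous property, and in $\left(2\right)\Rightarrow\left(1\right)$ both reduce to $\prescript{\ast}{}{B}\subseteq G_{X}\left(0\right)\subseteq r\prescript{\ast}{}{U}$ and transfer down. The only difference is that you spell out (via connectedness and \prettyref{prop:characterisation-of-bornological-connectedness}, \prettyref{prop:nonst-characterisation-of-boundedness}) why $\prescript{\ast}{}{B}\subseteq G_{X}\left(0\right)$, whereas the paper labels this step ``obviously''.
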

\begin{proof}
$\left(1\right)\Rightarrow\left(2\right)$: By \nameref{lem:bornological-approximation-lemma},
there exists an internal bounded set $B$ such that $G_{X}\left(0\right)\subseteq B$.
By transfer, there is an $r\in\prescript{\ast}{}{\mathbb{K}}$ such
that $B\subseteq r\prescript{\ast}{}{U}$. Hence $G_{X}\left(0\right)\subseteq r\prescript{\ast}{}{U}$.

$\left(2\right)\Rightarrow\left(1\right)$: Let $B$ be an arbitrary
bounded set. Obviously $\prescript{\ast}{}{B}\subseteq G_{X}\left(0\right)$,
so $\prescript{\ast}{}{B}\subseteq r\prescript{\ast}{}{U}$. By transfer,
there is an $r'\in\mathbb{K}$ such that $B\subseteq r'U$. Hence
$U$ is bornivorous, because $B$ was arbitrary.
\end{proof}
For each topological vector space, the set of all von Neumann bounded
sets is a bornology compatible with the vector space structure. Dually,
for each bornological vector space, the set of all bornivorous sets
makes the space a topological vector space in which the bornivorous
sets are the neighbourhoods of $0$. We omit the proofs and refer
the reader to \cite[p.21]{HN77} and \cite[1.E.8]{HN77}.
\begin{prop}[Standard]
Let $X$ and $Y$ be topological vector spaces. Every continuous
linear map $f\colon X\to Y$ is bornological with respect to the von
Neumann bornologies.
\end{prop}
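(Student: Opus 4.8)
The plan is to derive bornologicity of $f$ purely from the nonstandard characterisations already established. By \prettyref{thm:nonst-characterisation-of-bornological-map}, it suffices to show that $\prescript{\ast}{}{f}\left(G_{X}\left(x\right)\right)\subseteq G_{Y}\left(f\left(x\right)\right)$ for every $x\in X$. Unwinding the definition, $G_{X}\left(x\right)=\bigcup\set{\prescript{\ast}{}{B}|B\text{ von Neumann bounded and }x\in B}$, and for standard $B$ one has $\prescript{\ast}{}{f}\left(\prescript{\ast}{}{B}\right)=\prescript{\ast}{}{\left(f\left(B\right)\right)}$ with $f\left(x\right)\in f\left(B\right)$; so by \prettyref{prop:nonst-characterisation-of-boundedness} the whole statement reduces to a single claim: \emph{the $f$-image of every von Neumann bounded set is von Neumann bounded}. (In particular $f$ will then be bornological at every point, as the definition of ``bornological'' requires.)

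To prove that claim I would use \prettyref{thm:nonst-char-vN-bounded} twice, in both directions. Given a von Neumann bounded $B\subseteq X$, the forward direction provides an $r\in\prescript{\ast}{}{\mathbb{K}}$ with $\prescript{\ast}{}{B}\subseteq r\mu_{X}\left(0\right)$. Since $f$ is linear, $\prescript{\ast}{}{f}$ is $\prescript{\ast}{}{\mathbb{K}}$-linear by transfer, so $\prescript{\ast}{}{\left(f\left(B\right)\right)}=\prescript{\ast}{}{f}\left(\prescript{\ast}{}{B}\right)\subseteq\prescript{\ast}{}{f}\left(r\mu_{X}\left(0\right)\right)=r\,\prescript{\ast}{}{f}\left(\mu_{X}\left(0\right)\right)$. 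As $f$ is continuous, it is continuous at $0$, and the nonstandard characterisation of continuity gives $\prescript{\ast}{}{f}\left(\mu_{X}\left(0\right)\right)\subseteq\mu_{Y}\left(0\right)$; hence $\prescript{\ast}{}{\left(f\left(B\right)\right)}\subseteq r\mu_{Y}\left(0\right)$. Applying the converse direction of \prettyref{thm:nonst-char-vN-bounded} to $f\left(B\right)$ then shows that $f\left(B\right)$ is von Neumann bounded, which finishes the argument.

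The one place to be careful is to resist a tempting but flawed pointwise shortcut: if one writes an element $\xi\in G_{X}\left(0\right)$ as $\xi=r\eta$ with $\eta\in\mu_{X}\left(0\right)$, then $\prescript{\ast}{}{f}\left(\xi\right)=r\,\prescript{\ast}{}{f}\left(\eta\right)\in r\mu_{Y}\left(0\right)$, but this does \emph{not} force $\prescript{\ast}{}{f}\left(\xi\right)$ to be a finite point, because $r\mu_{Y}\left(0\right)\not\subseteq G_{Y}\left(0\right)$ in general --- an infinite scalar times an infinitesimal need not be finite (already in $\mathbb{R}$). Keeping the scalar $r$ bound to the whole internal set $\prescript{\ast}{}{B}$ and then re-invoking \prettyref{thm:nonst-char-vN-bounded} on $f\left(B\right)$ is exactly what sidesteps this. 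Everything else --- the commutation of $\ast$ with direct images and the identity $\prescript{\ast}{}{f}\left(rA\right)=r\,\prescript{\ast}{}{f}\left(A\right)$ for $\prescript{\ast}{}{\mathbb{K}}$-linear $\prescript{\ast}{}{f}$ --- is routine.
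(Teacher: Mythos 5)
Your proof is correct and follows the same route as the paper's: pick a von Neumann bounded $B$, use \prettyref{thm:nonst-char-vN-bounded} to extract an internal scalar $r$ with $\prescript{\ast}{}{B}\subseteq r\mu_{X}\left(0\right)$, push through $\prescript{\ast}{}{f}$ using linearity and the nonstandard characterisation of continuity at $0$, then apply \prettyref{thm:nonst-char-vN-bounded} in reverse to conclude $f\left(B\right)$ is von Neumann bounded. The framing via \prettyref{thm:nonst-characterisation-of-bornological-map} and the cautionary remark about the flawed pointwise shortcut are welcome additions, but the core argument is identical.
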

\begin{proof}
Let $B$ be an arbitrary von Neumann bounded subset of $X$. By \prettyref{thm:nonst-char-vN-bounded}
choose an $r\in\prescript{\ast}{}{\mathbb{K}}$ such that $\prescript{\ast}{}{B}\subseteq r\mu_{X}\left(0\right)$.
By the nonstandard characterisation of continuity, $\prescript{\ast}{}{f}\left(\prescript{\ast}{}{B}\right)\subseteq\prescript{\ast}{}{f}\left(r\mu_{X}\left(0\right)\right)=r\prescript{\ast}{}{f}\left(\mu_{X}\left(0\right)\right)\subseteq r\mu_{Y}\left(0\right)$.
By \prettyref{thm:nonst-char-vN-bounded} $f\left(B\right)$ is von
Neumann bounded. Hence $f$ is bornological.
\end{proof}
\begin{prop}[Standard]
Let $X$ and $Y$ be bornological vector spaces. Every bornological
linear map $f\colon X\to Y$ is continuous with respect to the bornivorous
topologies.
\end{prop}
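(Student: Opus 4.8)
The plan is to mirror the proof of the preceding proposition, with \prettyref{thm:nonst-char-of-bornivorous} now used in place of \prettyref{thm:nonst-char-vN-bounded}, together with the nonstandard characterisation of bornological maps. Since $f$ is linear and the bornivorous topology is a vector topology, hence translation invariant, it suffices to prove that $f$ is continuous at $0$, which by the nonstandard characterisation of continuity means $\prescript{\ast}{}{f}\left(\mu_{X}\left(0\right)\right)\subseteq\mu_{Y}\left(0\right)$. Because the bornivorous sets are exactly the neighbourhoods of $0$, we have $\mu_{Y}\left(0\right)=\bigcap\set{\prescript{\ast}{}{U}|U\subseteq Y\text{ bornivorous}}$, so it is enough to prove that for every bornivorous $U\subseteq Y$ the preimage $f^{-1}\left(U\right)$ is bornivorous in $X$: then $f^{-1}\left(U\right)$ is a neighbourhood of $0$ in $X$, so $\mu_{X}\left(0\right)\subseteq\prescript{\ast}{}{\left(f^{-1}\left(U\right)\right)}=\prescript{\ast}{}{f}^{-1}\left(\prescript{\ast}{}{U}\right)$, i.e.\ $\prescript{\ast}{}{f}\left(\mu_{X}\left(0\right)\right)\subseteq\prescript{\ast}{}{U}$, and intersecting over all bornivorous $U$ gives the claim.

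So fix a bornivorous set $U\subseteq Y$. By \prettyref{thm:nonst-char-of-bornivorous} there is $r\in\prescript{\ast}{}{\mathbb{K}}$ with $G_{Y}\left(0\right)\subseteq r\prescript{\ast}{}{U}$; we may assume $r\neq0$, since $r=0$ would give $G_{Y}\left(0\right)=\set{0}$, forcing $Y=\set{0}$ and making $f$ trivially continuous. As $f$ is bornological it is bornological at $0$, and $f\left(0\right)=0$ by linearity, so \prettyref{thm:nonst-characterisation-of-bornological-map} yields
\[
\prescript{\ast}{}{f}\left(G_{X}\left(0\right)\right)\subseteq G_{Y}\left(f\left(0\right)\right)=G_{Y}\left(0\right)\subseteq r\prescript{\ast}{}{U}.
\]
By transfer of the $\mathbb{K}$-linearity of $f$, the map $\prescript{\ast}{}{f}$ is $\prescript{\ast}{}{\mathbb{K}}$-linear; multiplying the inclusion above by $r^{-1}$ therefore gives $\prescript{\ast}{}{f}\left(r^{-1}G_{X}\left(0\right)\right)=r^{-1}\prescript{\ast}{}{f}\left(G_{X}\left(0\right)\right)\subseteq\prescript{\ast}{}{U}$, hence $r^{-1}G_{X}\left(0\right)\subseteq\prescript{\ast}{}{f}^{-1}\left(\prescript{\ast}{}{U}\right)=\prescript{\ast}{}{\left(f^{-1}\left(U\right)\right)}$ (the last equality by transfer), and so $G_{X}\left(0\right)\subseteq r\prescript{\ast}{}{\left(f^{-1}\left(U\right)\right)}$. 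Applying the converse direction of \prettyref{thm:nonst-char-of-bornivorous} inside $X$, we conclude that $f^{-1}\left(U\right)$ is bornivorous in $X$, which completes the proof.

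I do not anticipate a genuine obstacle: the statement is the formal dual of the previous proposition and the proof follows it step for step. The points deserving care are (i) the reduction to continuity at $0$, which uses the (cited) fact that the bornivorous sets form a vector — hence translation invariant — topology; (ii) the bookkeeping with the internal scalar $r\in\prescript{\ast}{}{\mathbb{K}}$, in particular that $\prescript{\ast}{}{f}$ commutes with multiplication by $r^{-1}$, which is transfer of linearity but requires excluding the degenerate case $r=0$; and (iii) the routine identities $\prescript{\ast}{}{\left(f^{-1}\left(U\right)\right)}=\prescript{\ast}{}{f}^{-1}\left(\prescript{\ast}{}{U}\right)$ and $\mu_{Y}\left(0\right)=\bigcap_{U\text{ bornivorous}}\prescript{\ast}{}{U}$.
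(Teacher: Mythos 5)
Your proof is correct and takes essentially the same route as the paper's: appeal to Theorem \ref{thm:nonst-char-of-bornivorous} to extract an internal scalar $r$ with $G_{Y}\left(0\right)\subseteq r\prescript{\ast}{}{U}$, push this back through $\prescript{\ast}{}{f}$ using Theorem \ref{thm:nonst-characterisation-of-bornological-map}, and conclude via the converse of Theorem \ref{thm:nonst-char-of-bornivorous} that $f^{-1}\left(U\right)$ is bornivorous in $X$. The paper's version is terser --- it writes the chain $G_{X}\left(0\right)\subseteq\prescript{\ast}{}{f}^{-1}\left(G_{Y}\left(0\right)\right)\subseteq\prescript{\ast}{}{f}^{-1}\left(r\prescript{\ast}{}{V}\right)=r\prescript{\ast}{}{f}^{-1}\left(\prescript{\ast}{}{V}\right)$ directly and leaves the reduction to continuity at $0$ implicit --- but the substance is the same. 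You are slightly more careful on two points the paper glosses over: the explicit reduction to continuity at $0$ via translation-invariance, and the degenerate case $r=0$, which is needed to justify commuting $r^{-1}$ past $\prescript{\ast}{}{f}$ (and is also tacitly needed for the paper's identity $\prescript{\ast}{}{f}^{-1}\left(r\prescript{\ast}{}{V}\right)=r\prescript{\ast}{}{f}^{-1}\left(\prescript{\ast}{}{V}\right)$). Both additions are correct and harmless.
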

\begin{proof}
Let $V$ be an arbitrary bornivorous subset of $Y$. By \prettyref{thm:nonst-char-of-bornivorous}
choose an $r\in\prescript{\ast}{}{\mathbb{K}}$ such that $G_{Y}\left(0\right)\subseteq r\prescript{\ast}{}{V}$.
By \prettyref{thm:nonst-characterisation-of-bornological-map}, $G_{X}\left(0\right)\subseteq\prescript{\ast}{}{f}^{-1}\left(G_{Y}\left(0\right)\right)\subseteq\prescript{\ast}{}{f}^{-1}\left(r\prescript{\ast}{}{V}\right)=r\prescript{\ast}{}{f}^{-1}\left(\prescript{\ast}{}{V}\right)$.
By \prettyref{thm:nonst-char-of-bornivorous}, $f^{-1}\left(V\right)$
is bornivorous. Hence $f$ is continuous (at $0$).
\end{proof}

\section{\label{sec:Coarse-structure}Coarse structure}

In this section, we extend nonstandard treatments to coarse structures
in a similar way to \prettyref{sec:Bornology}. As an application,
we give a nonstandard characterisation of the slowly oscillation property,
an important notion in the study of large-scale geometry, and use
it to prove some fundamental facts about that property.

\subsection{Coarse structures and finite closeness relations}
\begin{defn}[Standard]
\label{def:def-of-coarse}A \emph{coarse structure} on a set $X$
is a subset $\mathcal{C}_{X}$ of $\mathcal{P}\left(X\times X\right)$
such that
\begin{enumerate}
\item $\mathcal{C}_{X}$ contains the diagonal set of $X\times X$: $\Delta_{X}\in\mathcal{C}_{X}$;
\item $\mathcal{C}_{X}$ is downward closed: $E\in\mathcal{C}_{X}$ and
$F\subseteq E$ implies $F\in\mathcal{C}_{X}$;
\item $\mathcal{C}_{X}$ is closed under finite union: $E,F\in\mathcal{C}_{X}$
implies $E\cup F\in\mathcal{C}_{X}$;
\item $\mathcal{C}_{X}$ is closed under composition: $E,F\in\mathcal{C}_{X}$
implies $E\circ F\in\mathcal{C}_{X}$;
\item $\mathcal{C}_{X}$ is closed under inversion: $E\in\mathcal{C}_{X}$
implies $E^{-1}\in\mathcal{C}_{X}$.
\end{enumerate}
The pair $\left(X,\mathcal{C}_{X}\right)$ is called a \emph{coarse
space}. The elements of $\mathcal{C}_{X}$ are called \emph{controlled
sets} of $X$.
\end{defn}
For instance, consider a metric space $X$. Then the collection of
all binary relations $E$ on $X$ with $\sup_{\left(x,y\right)\in E}d_{X}\left(x,y\right)<\infty$
gives a coarse structure on $X$. See \prettyref{subsec:Examples-of-coarse}
for further examples.
\begin{defn}
Let $X$ be a coarse space. We say that $x,y\in\prescript{\ast}{}{X}$
are \emph{finitely close}, denoted by $x\sim_{X}y$, if there exists
an $E\in\mathcal{C}_{X}$ such that $\left(x,y\right)\in\prescript{\ast}{}{E}$.
Equivalently, the \emph{finite closeness relation} $\sim_{X}$ is
defined as the union $\bigcup_{E\in\mathcal{C}_{X}}\prescript{\ast}{}{E}$.
\end{defn}
The coarse structure can be recovered from its finite closeness relation
$\sim_{X}$. To see this, we use the following lemma, the uniform
counterpart of \nameref{lem:bornological-approximation-lemma}.
\begin{lem}[Coarse Approximation Lemma]
\label{lem:coarse-approximation-lemma}Let $X$ be a coarse space.
There exists an $E\in\prescript{\ast}{}{\mathcal{C}_{X}}$ with ${\sim_{X}}\subseteq E$.
\end{lem}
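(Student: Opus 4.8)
The plan is to mimic the proof of the \nameref{lem:bornological-approximation-lemma} exactly, using weak saturation (the enlargement property). First I would observe that the family $\mathcal{C}_X$ is closed under finite union by axiom (3) of \prettyref{def:def-of-coarse}, so for every finite subfamily $\mathcal{A} \subseteq \mathcal{C}_X$ there is a single $E' \in \mathcal{C}_X$ with $F \subseteq E'$ for all $F \in \mathcal{A}$. This means the family of sets $\set{\set{F' \in \prescript{\ast}{}{\mathcal{C}_X} | \prescript{\ast}{}{F} \subseteq F'} | F \in \mathcal{C}_X}$ has the finite intersection property: a finite collection $\prescript{\ast}{}{F}_1, \dots, \prescript{\ast}{}{F}_n$ is simultaneously contained in $\prescript{\ast}{}{E'}$ where $E' \in \mathcal{C}_X$ dominates $F_1, \dots, F_n$.

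Next I would invoke weak saturation (the enlargement property): since this is an internally-indexed-looking family — more precisely, a standardly-indexed family of internal sets with FIP, and the index set $\mathcal{C}_X$ is standard — there exists an $E \in \prescript{\ast}{}{\mathcal{C}_X}$ lying in the intersection, i.e.\ $\prescript{\ast}{}{F} \subseteq E$ for every $F \in \mathcal{C}_X$. Taking the union over all $F \in \mathcal{C}_X$ then gives ${\sim_X} = \bigcup_{F \in \mathcal{C}_X} \prescript{\ast}{}{F} \subseteq E$, which is exactly the claim. This is the entire argument; it is a verbatim transcription of the bornological case with $\mathcal{BN}_X(x)$ replaced by $\mathcal{C}_X$ and $G_X(x)$ replaced by ${\sim_X}$.

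The only point requiring a moment's care — and the closest thing to an obstacle — is checking that the closure hypothesis of the enlargement property is genuinely met, namely that we are applying weak saturation to a family indexed by a standard set (here $\mathcal{C}_X \in \mathbb{U}$) whose members are the standard/internal sets $\set{F' \in \prescript{\ast}{}{\mathcal{C}_X} | \prescript{\ast}{}{F} \subseteq F'}$; these are all nonempty (each contains $\prescript{\ast}{}{F}$ itself) and have FIP by the finite-union closure just noted. Once that bookkeeping is in place the conclusion is immediate, so I expect the proof to be two or three sentences long. I would write it as: ``Since $\mathcal{C}_X$ is closed under finite union, for every finite subset $\mathcal{A}$ of $\mathcal{C}_X$ there exists an $E' \in \mathcal{C}_X$ such that $F \subseteq E'$ for all $F \in \mathcal{A}$. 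By weak saturation, there exists an $E \in \prescript{\ast}{}{\mathcal{C}_X}$ such that $\prescript{\ast}{}{F} \subseteq E$ for all $F \in \mathcal{C}_X$. Hence ${\sim_X} \subseteq E$.''
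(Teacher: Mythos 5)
Your proposal is correct and is essentially word-for-word the paper's own proof: both apply weak saturation to the directedness (under finite union) of $\mathcal{C}_{X}$ to obtain an internal controlled set $E$ dominating every $\prescript{\ast}{}{F}$ for $F\in\mathcal{C}_{X}$, and then conclude ${\sim_{X}}\subseteq E$. No differences worth noting.
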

\begin{proof}
Since $\mathcal{C}_{X}$ is closed under finite union, for every finite
subset $\mathcal{F}$ of $\mathcal{C}_{X}$ there exists an $E'\in\mathcal{C}_{X}$
such that $F\subseteq E'$ for all $F\in\mathcal{F}$. By weak saturation,
there exists an $E\in\prescript{\ast}{}{\mathcal{C}_{X}}$ such that
$\prescript{\ast}{}{F}\subseteq E$ for all $F\in\mathcal{C}_{X}$.
It follows that ${\sim_{X}}\subseteq E$.
\end{proof}
\begin{prop}
\label{prop:nonst-characterisation-of-controlled-sets}Let $X$ be
a coarse structure. Let $E$ be a subset of $X\times X$. The following
are equivalent:
\begin{enumerate}
\item $E\in\mathcal{C}_{X}$;
\item $\prescript{\ast}{}{E}\subseteq{\sim_{X}}$.
\end{enumerate}
\end{prop}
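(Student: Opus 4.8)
The plan is to mirror, word for word where possible, the argument used for \prettyref{prop:nonst-characterisation-of-boundedness}, replacing the \nameref{lem:bornological-approximation-lemma} by its uniform counterpart \nameref{lem:coarse-approximation-lemma}. The implication $\left(1\right)\Rightarrow\left(2\right)$ is trivial: if $E\in\mathcal{C}_{X}$, then by the very definition of the finite closeness relation we have $\prescript{\ast}{}{E}\subseteq\bigcup_{F\in\mathcal{C}_{X}}\prescript{\ast}{}{F}={\sim_{X}}$, so there is nothing to do there.

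For $\left(2\right)\Rightarrow\left(1\right)$, I would argue as follows. Assume $\prescript{\ast}{}{E}\subseteq{\sim_{X}}$. By \nameref{lem:coarse-approximation-lemma} there is an internal controlled set $F\in\prescript{\ast}{}{\mathcal{C}_{X}}$ with ${\sim_{X}}\subseteq F$. Chaining the two inclusions gives $\prescript{\ast}{}{E}\subseteq{\sim_{X}}\subseteq F\in\prescript{\ast}{}{\mathcal{C}_{X}}$. Now the downward closedness of $\mathcal{C}_{X}$ (axiom (2) of \prettyref{def:def-of-coarse}) is a first-order statement, so by transfer $\prescript{\ast}{}{\mathcal{C}_{X}}$ is downward closed as well; hence $\prescript{\ast}{}{E}\in\prescript{\ast}{}{\mathcal{C}_{X}}$. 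Since $E$ is a standard set, transfer of the equivalence ``$E\in\mathcal{C}_{X}$'' $\iff$ ``$\prescript{\ast}{}{E}\in\prescript{\ast}{}{\mathcal{C}_{X}}$'' yields $E\in\mathcal{C}_{X}$, as required.

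There is essentially no obstacle here: the only slightly delicate point is to make sure the appeal to transfer is legitimate, i.e. that ``$A\subseteq B$ and $B\in\mathcal{C}_{X}$ implies $A\in\mathcal{C}_{X}$'' is genuinely expressible in the language of $\mathbb{U}$ and therefore transfers to $\prescript{\ast}{}{\mathbb{U}}$; this is routine. All the real content has already been packaged into \nameref{lem:coarse-approximation-lemma}, which in turn rests only on closure under finite union together with weak saturation. I would keep the written proof to two or three lines, noting that ``the only nontrivial part is $\left(2\right)\Rightarrow\left(1\right)$'' exactly as in the bornological case.
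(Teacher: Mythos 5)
Your proof is correct and follows essentially the same route as the paper: the forward direction is immediate from the definition of $\sim_{X}$, and the reverse direction uses the Coarse Approximation Lemma to sandwich $\prescript{\ast}{}{E}$ inside an internal controlled set and then invokes transfer together with downward closedness. You merely unpack the paper's terse ``by transfer'' a bit more explicitly, which does not change the argument.
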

\begin{proof}
$\left(1\right)\Rightarrow\left(2\right)$ is trivial by the definition
of $\sim_{X}$. Let us prove $\left(2\right)\Rightarrow\left(1\right)$.
By \nameref{lem:coarse-approximation-lemma}, there exists an $F\in\prescript{\ast}{}{\mathcal{C}_{X}}$
such that ${\sim_{X}}\subseteq F$. Since $\prescript{\ast}{}{E}\subseteq{\sim_{X}}\subseteq F$,
we have that $E\in\mathcal{C}_{X}$ by transfer.
\end{proof}
\begin{prop}
For any coarse space $X$, $\sim_{X}$ is a galactic equivalence relation
on $\prescript{\ast}{}{X}$.
\end{prop}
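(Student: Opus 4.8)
The plan is to check the three defining properties of an equivalence relation one at a time, each coming directly out of one of the closure axioms in \prettyref{def:def-of-coarse}, and then to read off galacticity from \prettyref{prop:nonst-characterisation-of-controlled-sets}.

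For reflexivity I would invoke axiom (1): $\Delta_{X}\in\mathcal{C}_{X}$, and $\prescript{\ast}{}{\Delta_{X}}$ is the diagonal of $\prescript{\ast}{}{X}\times\prescript{\ast}{}{X}$, so $\left(x,x\right)\in\prescript{\ast}{}{\Delta_{X}}\subseteq{\sim_{X}}$ for every $x\in\prescript{\ast}{}{X}$. For symmetry, if $x\sim_{X}y$ I pick $E\in\mathcal{C}_{X}$ with $\left(x,y\right)\in\prescript{\ast}{}{E}$; then $E^{-1}\in\mathcal{C}_{X}$ by axiom (5), and since $\prescript{\ast}{}{}$ commutes with inversion of relations (transfer) we get $\left(y,x\right)\in\prescript{\ast}{}{\left(E^{-1}\right)}$, hence $y\sim_{X}x$. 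For transitivity, if $x\sim_{X}y$ and $y\sim_{X}z$ I pick $E,F\in\mathcal{C}_{X}$ witnessing these; then $\left(x,z\right)$ lies in the composite of $\prescript{\ast}{}{E}$ and $\prescript{\ast}{}{F}$ (in whichever order the paper's $\circ$ dictates), which equals $\prescript{\ast}{}{}$ of the composite of $E$ and $F$, a controlled set by axiom (4); hence $x\sim_{X}z$. The only thing requiring a moment's attention is the routine transfer fact that $\prescript{\ast}{}{}$ commutes with $\left(\cdot\right)^{-1}$ and $\circ$ on binary relations.

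It then remains to show that $\sim_{X}$ is galactic, i.e. ${\sim_{X}}=\bigcup\set{\prescript{\ast}{}{E}|\prescript{\ast}{}{E}\subseteq{\sim_{X}}}$. Here $E$ ranges over all standard sets, but $\prescript{\ast}{}{E}\subseteq{\sim_{X}}\subseteq\prescript{\ast}{}{X}\times\prescript{\ast}{}{X}=\prescript{\ast}{}{\left(X\times X\right)}$ forces $E\subseteq X\times X$ by transfer, so we may take $E\subseteq X\times X$. By \prettyref{prop:nonst-characterisation-of-controlled-sets}, $\prescript{\ast}{}{E}\subseteq{\sim_{X}}$ holds exactly when $E\in\mathcal{C}_{X}$, so the right-hand side equals $\bigcup_{E\in\mathcal{C}_{X}}\prescript{\ast}{}{E}$, which is the very definition of $\sim_{X}$. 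This finishes the argument. I do not expect any genuine obstacle: the statement is essentially a repackaging of the coarse-structure axioms together with the already-established \nameref{lem:coarse-approximation-lemma} and \prettyref{prop:nonst-characterisation-of-controlled-sets}.
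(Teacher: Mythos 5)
Your argument is correct and matches the paper's: reflexivity, symmetry, and transitivity come from $\Delta_{X}\in\mathcal{C}_{X}$, closure under inversion, and closure under composition, exactly as in the paper, with your pointwise choice of witnesses $E,F$ versus the paper's set-level computation ${\sim_{X}}\circ{\sim_{X}}=\bigcup\prescript{\ast}{}{(E\circ F)}$ being only a cosmetic difference. For galacticity, the paper simply notes it is immediate from the definition of ${\sim_{X}}$ as $\bigcup_{E\in\mathcal{C}_{X}}\prescript{\ast}{}{E}$ (every point of the union lies in some $\prescript{\ast}{}{E}\subseteq{\sim_{X}}$), whereas your detour through \prettyref{prop:nonst-characterisation-of-controlled-sets} is valid but invokes weak saturation, a heavier tool than this step requires.
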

\begin{proof}
By definition, $\sim_{X}$ is galactic. Since $\Delta_{X}\in\mathcal{C}_{X}$,
we have that $\Delta_{\prescript{\ast}{}{X}}=\prescript{\ast}{}{\Delta_{X}}\subseteq{\sim_{X}}$.
The composability of $\mathcal{C}_{X}$ implies that 
\[
{\sim_{X}}\circ{\sim_{X}}=\bigcup_{E\in\mathcal{C}_{X}}\prescript{\ast}{}{E}\circ\bigcup_{F\in\mathcal{C}_{X}}\prescript{\ast}{}{F}=\bigcup_{E,F\in\mathcal{C}_{X}}\prescript{\ast}{}{\left(E\circ F\right)}\subseteq{\sim_{X}}.
\]
The invertibility of $\mathcal{C}_{X}$ implies that
\[
{\sim_{X}}^{-1}=\left(\bigcup_{E\in\mathcal{C}_{X}}\prescript{\ast}{}{E}\right)^{-1}=\bigcup_{E\in\mathcal{C}_{X}}\prescript{\ast}{}{E}^{-1}\subseteq{\sim_{X}}.
\]
\end{proof}
\begin{thm}
\label{thm:G-equiv-rel-determines-coarse-str}Let $X$ be any set.
If $R$ is a galactic equivalence relation on $\prescript{\ast}{}{X}$,
then $X$ admits a unique coarse structure whose finite closeness
coincides with $R$.
\end{thm}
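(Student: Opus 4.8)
The plan is to copy the strategy of \prettyref{thm:external-def-of-prebornology}, now with the galactic equivalence relation $R$ playing the role of the galaxy map and the five axioms of \prettyref{def:def-of-coarse} replacing (BN1)--(BN4). Concretely, I would set
\[
\mathcal{C}_{X}=\set{E\subseteq X\times X|\prescript{\ast}{}{E}\subseteq R}
\]
and first verify that this is a coarse structure. Axiom (1) holds because $R$ is reflexive on $\prescript{\ast}{}{X}$, so $\prescript{\ast}{}{\Delta_{X}}=\Delta_{\prescript{\ast}{}{X}}\subseteq R$. Axioms (2) and (3) are immediate from transfer, using $F\subseteq E\implies\prescript{\ast}{}{F}\subseteq\prescript{\ast}{}{E}$ and $\prescript{\ast}{}{\left(E\cup F\right)}=\prescript{\ast}{}{E}\cup\prescript{\ast}{}{F}$. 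For (4) and (5) I would use transfer together with $\prescript{\ast}{}{\left(E\circ F\right)}=\prescript{\ast}{}{E}\circ\prescript{\ast}{}{F}$ and $\prescript{\ast}{}{\left(E^{-1}\right)}=\left(\prescript{\ast}{}{E}\right)^{-1}$, and then close using the transitivity ($R\circ R\subseteq R$) and symmetry ($R^{-1}\subseteq R$) of $R$. This makes $\mathcal{C}_{X}$ a coarse structure on $X$.

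Next I would show that the finite closeness relation $\sim_{X}$ of $\mathcal{C}_{X}$ equals $R$. One inclusion, $\sim_{X}=\bigcup_{E\in\mathcal{C}_{X}}\prescript{\ast}{}{E}\subseteq R$, is built into the definition of $\mathcal{C}_{X}$. For the reverse inclusion I would invoke the hypothesis that $R$ is galactic, that is, $R=\bigcup\set{\prescript{\ast}{}{E}|\prescript{\ast}{}{E}\subseteq R}$; here the small point to notice is that $\prescript{\ast}{}{E}\subseteq R\subseteq\prescript{\ast}{}{X}\times\prescript{\ast}{}{X}=\prescript{\ast}{}{\left(X\times X\right)}$ forces $E\subseteq X\times X$ by transfer, so every such $E$ actually belongs to $\mathcal{C}_{X}$; hence $R\subseteq\bigcup_{E\in\mathcal{C}_{X}}\prescript{\ast}{}{E}=\sim_{X}$, and the two relations coincide. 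Uniqueness then comes for free from \prettyref{prop:nonst-characterisation-of-controlled-sets}: any coarse structure $\mathcal{C}$ on $X$ whose finite closeness relation is $R$ satisfies $E\in\mathcal{C}\iff\prescript{\ast}{}{E}\subseteq R$, which is exactly the defining condition for $\mathcal{C}_{X}$, so $\mathcal{C}=\mathcal{C}_{X}$.

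I do not expect a serious obstacle; the whole argument is the uniform-relational mirror of the prebornological case. The one step that genuinely uses a hypothesis beyond ``$\mathcal{C}_{X}$ is a coarse structure'' is the reverse inclusion $R\subseteq\sim_{X}$, where galacticity of $R$ --- and not merely that $R$ is an equivalence relation --- is what makes $R$ recoverable; this is the analogue of pointwise galacticity in \prettyref{thm:external-def-of-prebornology}. It is worth noting that, in contrast to \prettyref{prop:nonst-characterisation-of-controlled-sets}, the \nameref{lem:coarse-approximation-lemma} is not needed for this direction.
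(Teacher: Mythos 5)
Your proof is correct and takes essentially the same approach as the paper: define $\mathcal{C}_{X}=\set{E\subseteq X\times X\mid\prescript{\ast}{}{E}\subseteq R}$, check the five coarse-structure axioms from reflexivity, transitivity, symmetry, downward closedness and finite union, and then use galacticity of $R$ to recover $R={\sim_{X}}$. Your explicit treatment of uniqueness via \prettyref{prop:nonst-characterisation-of-controlled-sets} is a small but welcome addition that the paper leaves implicit, and your observation that galacticity forces the approximating standard sets $E$ to lie in $X\times X$ is accurate.
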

\begin{proof}
Let $\mathcal{C}_{X}=\set{E\subseteq X\times X|\prescript{\ast}{}{E}\subseteq R}$.
Clearly $\mathcal{C}_{X}$ satisfies (2) and (3) of \prettyref{def:def-of-coarse}.
The reflexivity, transitivity and symmetricity of $R$ imply (1),
(4) and (5), respectively. Hence $\mathcal{C}_{X}$ is a coarse structure
on $X$. Since $R$ is galactic, $R=\bigcup_{E\in\mathcal{C}_{X}}\prescript{\ast}{}{E}={\sim_{X}}$.
\end{proof}
Hence the correspondence $\mathcal{C}_{X}\leftrightarrow{\sim_{X}}$
is a bijection between the coarse structures on $X$ and the galactic
equivalence relations on $\prescript{\ast}{}{X}$.
\begin{rem}
These two results are the large-scale counterpart of \cite[Theorem 1.7]{CS83}
which states that monadic equivalence relations correspond to uniformities.
Here a set $A$ is said to be \emph{monadic} if $A=\bigcap\set{\prescript{\ast}{}{B}|A\subseteq\prescript{\ast}{}{B}}$.
\end{rem}
\begin{defn}[Standard]
Let $X$ be a coarse space. A subset $B$ of $X$ is said to be \emph{bounded}
if $B\times B\in\mathcal{C}_{X}$ holds. The \emph{induced prebornology}
on $X$ is $\mathcal{B}_{X}=\set{B\subseteq X|B\text{ is bounded}}$.
\end{defn}
\begin{rem}
\label{rem:Adjoint}Notice that $\mathcal{B}_{X}$ may not be a bornology,
while it always is a prebornology. The construction of induced prebornology
can be considered as a (forgetful) functor $U$ from the category
of coarse spaces to the category of prebornological spaces. Conversely,
given a prebornological space $X$, define a galactic equivalence
relation $\sim_{X}$ on $\prescript{\ast}{}{X}$ by
\[
x\sim_{X}y\iff x=y\text{ or }x,y\in G_{X}\left(z\right)\text{ for some }z\in X.
\]
Then $\sim_{X}$ determines a unique coarse structure on $X$ by \prettyref{thm:G-equiv-rel-determines-coarse-str}.
This construction can be extended to a functor $T$ from the category
of prebornological spaces to the category of coarse spaces. It is
easy to see that $UTX=X$. Moreover, it can be proved that $T$ is
the left adjoint of $U$.
\end{rem}
\begin{prop}
\label{prop:nonst-characterisation-of-boundedness-wrt-ind-bornology}Let
$X$ be a coarse space and let $B$ be a subset of $X$. The following
are equivalent:
\begin{enumerate}
\item $B$ is bounded;
\item $x\sim_{X}y$ holds for any $x,y\in\prescript{\ast}{}{B}$.
\end{enumerate}
\end{prop}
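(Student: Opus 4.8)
The plan is to unwind both conditions directly against the definitions, using \prettyref{prop:nonst-characterisation-of-controlled-sets} as the bridge. Recall that $B$ is bounded means $B\times B\in\mathcal{C}_X$, and by \prettyref{prop:nonst-characterisation-of-controlled-sets} this is equivalent to $\prescript{\ast}{}{(B\times B)}\subseteq{\sim_X}$. Since $\prescript{\ast}{}{(B\times B)}=\prescript{\ast}{}{B}\times\prescript{\ast}{}{B}$ by transfer, the condition $B\times B\in\mathcal{C}_X$ becomes exactly ``$(x,y)\in{\sim_X}$ for all $x,y\in\prescript{\ast}{}{B}$,'' which is condition (2). So the equivalence is essentially immediate once we chase through the three translations.

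First I would spell out $(1)\Rightarrow(2)$: assume $B\times B\in\mathcal{C}_X$; then $\prescript{\ast}{}{B}\times\prescript{\ast}{}{B}=\prescript{\ast}{}{(B\times B)}\subseteq{\sim_X}$ by the trivial direction of \prettyref{prop:nonst-characterisation-of-controlled-sets} (or just by the definition of $\sim_X$ as $\bigcup_{E\in\mathcal{C}_X}\prescript{\ast}{}{E}$), so any $x,y\in\prescript{\ast}{}{B}$ satisfy $x\sim_X y$. For $(2)\Rightarrow(1)$: if $x\sim_X y$ for all $x,y\in\prescript{\ast}{}{B}$, then $\prescript{\ast}{}{(B\times B)}=\prescript{\ast}{}{B}\times\prescript{\ast}{}{B}\subseteq{\sim_X}$, and \prettyref{prop:nonst-characterisation-of-controlled-sets} gives $B\times B\in\mathcal{C}_X$, i.e. $B$ is bounded.

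The only point that requires a word of care — really the main (and minor) obstacle — is the identification $\prescript{\ast}{}{(B\times B)}=\prescript{\ast}{}{B}\times\prescript{\ast}{}{B}$; this holds by transfer of the first-order statement ``$\forall u\,\forall v\,((u,v)\in B\times B\leftrightarrow u\in B\wedge v\in B)$.'' Everything else is a direct substitution. I would therefore keep the written proof to two or three sentences, invoking \prettyref{prop:nonst-characterisation-of-controlled-sets} for both implications and transfer for the product identity, with no calculation needed.
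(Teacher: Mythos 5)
Your argument is correct and is essentially identical to the paper's: both rewrite condition (2) as $\prescript{\ast}{}{B}\times\prescript{\ast}{}{B}\subseteq{\sim_X}$, identify this with $\prescript{\ast}{}{(B\times B)}\subseteq{\sim_X}$ via transfer, and then invoke \prettyref{prop:nonst-characterisation-of-controlled-sets} to conclude $B\times B\in\mathcal{C}_X$. The paper is just terser, leaving the transfer step implicit; you spell it out, which is a harmless elaboration.
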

\begin{proof}
(2) is equivalent to ``$\prescript{\ast}{}{B}\times\prescript{\ast}{}{B}\subseteq{\sim_{X}}$''.
By \prettyref{prop:nonst-characterisation-of-controlled-sets}, this
is also equivalent to ``$B\times B\in\mathcal{C}_{X}$''.
\end{proof}
\begin{defn}
Let $X$ be a coarse space. The \emph{coarse galaxy} of $x\in\prescript{\ast}{}{X}$
is the set
\[
G_{X}^{c}\left(x\right)=\set{y\in\prescript{\ast}{}{X}|x\sim_{X}y}.
\]
\end{defn}
The coarse galaxy $G_{X}^{c}$ is defined on $\prescript{\ast}{}{X}$,
while the galaxy $G_{X}\left(x\right)$ is defined only on $X$.
\begin{prop}
\label{prop:galaxy=00003Dcoarse-galaxy}Let $X$ be a coarse space
equipped with the induced prebornology. Then, $G_{X}^{c}\left(x\right)=G_{X}\left(x\right)$
holds for all $x\in X$.
\end{prop}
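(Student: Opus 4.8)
The plan is to prove the two inclusions $G_{X}\left(x\right)\subseteq G_{X}^{c}\left(x\right)$ and $G_{X}^{c}\left(x\right)\subseteq G_{X}\left(x\right)$ separately, keeping in mind that for the induced prebornology a set $B\subseteq X$ is bounded exactly when $B\times B\in\mathcal{C}_{X}$ (equivalently, by \prettyref{prop:nonst-characterisation-of-boundedness-wrt-ind-bornology}, when every pair of points of $\prescript{\ast}{}{B}$ is finitely close).

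For $G_{X}\left(x\right)\subseteq G_{X}^{c}\left(x\right)$ I would take $y\in G_{X}\left(x\right)$; by the definition of the galaxy map there is a bounded $B$ with $x\in B$ and $y\in\prescript{\ast}{}{B}$. Since $B$ is standard we also have $x\in\prescript{\ast}{}{B}$, so $\left(x,y\right)\in\prescript{\ast}{}{B}\times\prescript{\ast}{}{B}=\prescript{\ast}{}{\left(B\times B\right)}$; as $B\times B\in\mathcal{C}_{X}$ this exhibits $x\sim_{X}y$, that is, $y\in G_{X}^{c}\left(x\right)$. This direction is essentially immediate.

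For the reverse inclusion, suppose $x\sim_{X}y$ and fix $E\in\mathcal{C}_{X}$ with $\left(x,y\right)\in\prescript{\ast}{}{E}$. Replacing $E$ by $E'=E\cup E^{-1}\cup\Delta_{X}$, which is still controlled by axioms (1), (3) and (5) of \prettyref{def:def-of-coarse}, I may assume that $E'$ is symmetric and contains the diagonal. Set $B=\set{z\in X|\left(x,z\right)\in E'}$. Then $x\in B$ because $\Delta_{X}\subseteq E'$, and $B\times B\subseteq E'\circ E'$, since for $z,w\in B$ one has $\left(z,x\right)\in\left(E'\right)^{-1}=E'$ and $\left(x,w\right)\in E'$. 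As $E'\circ E'\in\mathcal{C}_{X}$ by closure under composition, downward closure yields $B\times B\in\mathcal{C}_{X}$, so $B$ is a bounded set containing $x$. Finally, transfer of the section operation, legitimate because $x\in X$ is standard, gives $\prescript{\ast}{}{B}=\set{z\in\prescript{\ast}{}{X}|\left(x,z\right)\in\prescript{\ast}{}{E'}}$, and since $\left(x,y\right)\in\prescript{\ast}{}{E}\subseteq\prescript{\ast}{}{E'}$ we get $y\in\prescript{\ast}{}{B}\subseteq G_{X}\left(x\right)$.

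I do not expect a genuine obstacle here. The only point deserving a moment's care is the equality $\prescript{\ast}{}{\left(E'\left[\set{x}\right]\right)}=\prescript{\ast}{}{E'}\left[\set{x}\right]$, which holds precisely because $x$ is standard; this is also why the proposition is stated for $x\in X$, whereas $G_{X}^{c}$ is a priori defined on all of $\prescript{\ast}{}{X}$. Equivalently, one may read the whole statement as the assertion that, for standard $x$, $x\sim_{X}y$ holds if and only if $\set{x,y}$ is contained in $\prescript{\ast}{}{B}$ for some bounded $B$, the two inclusions being the two directions of this equivalence.
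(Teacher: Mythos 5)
Your proof is correct and follows essentially the same route as the paper's: in both, the inclusion $G_{X}\left(x\right)\subseteq G_{X}^{c}\left(x\right)$ comes from a bounded $B\ni x$ giving a controlled set ($B\times B$ for you, $\set{x}\times B$ in the paper) that contains $\set{x}\times\prescript{\ast}{}{B}$, and the reverse inclusion comes from observing that the $E$-section at the standard point $x$, made to contain $x$, is a bounded set. The only differences are cosmetic: you verify that $B=E'\left[x\right]$ is bounded by the direct standard computation $B\times B\subseteq E'\circ E'\in\mathcal{C}_{X}$, whereas the paper checks it via the nonstandard criterion of \prettyref{prop:nonst-characterisation-of-boundedness-wrt-ind-bornology}; and you symmetrise and reflexivise $E$ up front, where the paper adjoins $\Delta_{X}$ at the end.
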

\begin{proof}
Let $E\in\mathcal{C}_{X}$. Consider the $E$-neighbourhood of $x$,
$E\left[x\right]=\set{y\in X|\left(x,y\right)\in E}$. For each $y,y'\in\prescript{\ast}{}{\left(E\left[x\right]\right)}$,
since $\left(x,y\right)\in\prescript{\ast}{}{E}$ and $\left(x,y'\right)\in\prescript{\ast}{}{E}$,
we have that $y\sim_{X}x\sim_{X}y'$. By \prettyref{prop:nonst-characterisation-of-boundedness-wrt-ind-bornology},
$E\left[x\right]$ is bounded. The union $E\left[x\right]\cup\set{x}=\left(E\cup\Delta_{X}\right)\left[x\right]$
is also bounded. Hence
\begin{align*}
G_{X}^{c}\left(x\right) & =\bigcup_{E\in\mathcal{C}_{X}}\prescript{\ast}{}{E}\left[x\right]\\
 & \subseteq\bigcup_{E\in\mathcal{C}_{X}}\prescript{\ast}{}{E\left[x\right]}\cup\set{x}\\
 & \subseteq\bigcup_{B\in\mathcal{BN}_{X}\left(x\right)}\prescript{\ast}{}{B}\\
 & =G_{X}\left(x\right).
\end{align*}

Let $B$ be a bounded set containing $x$. Then, $\set{x}\times B\subseteq B\times B\in\mathcal{C}_{X}$,
so $\set{x}\times B\in\mathcal{C}_{X}$. This implies that
\begin{align*}
G_{X}\left(x\right) & =\bigcup_{B\in\mathcal{BN}_{X}\left(x\right)}\prescript{\ast}{}{B}\\
 & =\bigcup_{B\in\mathcal{BN}_{X}\left(x\right)}\left(\set{x}\times\prescript{\ast}{}{B}\right)\left[x\right]\\
 & \subseteq\bigcup_{E\in\mathcal{C}_{X}}\prescript{\ast}{}{\left(E\left[x\right]\right)}\\
 & =G_{X}^{c}\left(x\right).\\
\end{align*}
\end{proof}
\begin{cor}
Let $X$ be a coarse space. The following are equivalent:
\begin{enumerate}
\item $X$ is connected (as a prebornological space);
\item $x\sim_{X}y$ for all $x,y\in X$.
\end{enumerate}
\end{cor}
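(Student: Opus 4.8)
The plan is to chase the definitions and reduce everything to results already established. Recall that $X$ is connected as a prebornological space exactly when its induced prebornology $\mathcal{B}_{X}$ is a bornology, which by Proposition~\ref{prop:characterisation-of-bornological-connectedness} is equivalent to $G_{X}(x)=G_{X}(y)$ for all $x,y\in X$, and also to $G_{X}(x)\supseteq X$ for all $x\in X$. On the other hand, by Proposition~\ref{prop:galaxy=00003Dcoarse-galaxy} we have $G_{X}(x)=G_{X}^{c}(x)=\{y\in\prescript{\ast}{}{X}\mid x\sim_{X}y\}$ for every $x\in X$, since $X$ carries the induced prebornology. So the whole statement is really a translation between the galaxy-map characterisation of connectedness and the relation $\sim_{X}$.

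For $(1)\Rightarrow(2)$: assume $X$ is connected. Then for any $x\in X$ we have $X\subseteq G_{X}(x)=G_{X}^{c}(x)$, so every $y\in X$ satisfies $x\sim_{X}y$. This is exactly condition $(2)$.

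For $(2)\Rightarrow(1)$: assume $x\sim_{X}y$ for all $x,y\in X$. Fix $x\in X$. Then $X\subseteq G_{X}^{c}(x)=G_{X}(x)$, so $G_{X}(x)\supseteq X$ for all $x\in X$, which is condition $(3)$ of Proposition~\ref{prop:characterisation-of-bornological-connectedness}; hence $X$ is connected.

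I expect no real obstacle here: the only thing to be careful about is invoking Proposition~\ref{prop:galaxy=00003Dcoarse-galaxy} correctly, namely that the galaxy $G_{X}$ referred to is the one coming from the induced prebornology (which is the prebornology meant in statement $(1)$), so that the identification $G_{X}(x)=G_{X}^{c}(x)$ is legitimate; after that the argument is a one-line unwinding of definitions in each direction.
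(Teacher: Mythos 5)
Your proof is correct and follows exactly the same route as the paper's own (which is a one-liner citing Propositions~\ref{prop:characterisation-of-bornological-connectedness} and~\ref{prop:galaxy=00003Dcoarse-galaxy}). You have simply unpacked that one-liner into explicit detail, including the necessary check that $G_{X}$ in the connectedness criterion and $G_{X}^{c}$ agree on $X$ for the induced prebornology.
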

\begin{proof}
Immediate from \prettyref{prop:characterisation-of-bornological-connectedness}
and \prettyref{prop:galaxy=00003Dcoarse-galaxy}.
\end{proof}

\subsection{\label{subsec:Examples-of-coarse}Examples of coarse spaces}
\begin{example}
Let $X$ be a set. The \emph{maximal coarse structure} on $X$ is
$\mathcal{P}\left(X\times X\right)$. The finite closeness relation
is $\prescript{\ast}{}{X}\times\prescript{\ast}{}{X}$.
\end{example}

\begin{example}
Let $X$ be a set. The \emph{discrete coarse structure} on $X$ is
$\mathcal{P}\left(\Delta_{X}\right)$. The finite closeness relation
is the diagonal $\Delta_{\prescript{\ast}{}{X}}$. Generally, given
an equivalence relation $E$ on $X$, its power set $\mathcal{P}\left(E\right)$
is a coarse structure on $X$ whose finite closeness relation is precisely
$\prescript{\ast}{}{E}$.
\end{example}

\begin{example}
Let $X$ be a set. The \emph{finite coarse structure} on $X$ is
\[
\set{E\subseteq X\times X|E\setminus\Delta_{X}\text{ is finite}}.
\]
The finite closeness relation is ${\sim_{X}}=\Delta_{\prescript{\ast}{}{X}}\cup X\times X$:
let $x,y\in\prescript{\ast}{}{X}$. Clearly, if $x=y$ or $x,y\in X$,
then $x\sim_{X}y$. To see the reverse direction, suppose that $x\sim_{X}y$
but $x\neq y$. Choose a controlled set $E$ such that $\left(x,y\right)\in\prescript{\ast}{}{E}$.
Since $E$ contains only a \emph{standard} finite number of pairs
$\left(x_{0},y_{0}\right),\ldots,\left(x_{n},y_{n}\right)$ off the
diagonal $\Delta_{X}$, we have that $\left(x,y\right)=\left(x_{i},y_{i}\right)$
for some $0\le i\leq n$ by transfer. It follows that $x,y\in X$.
\end{example}

\begin{example}
Let $X$ be a pseudometric space. The \emph{bounded coarse structure}
of $X$ is
\[
\Set{E\subseteq X\times X|\sup_{\left(x,y\right)\in E}d_{X}\left(x,y\right)<\infty}.
\]
The finite closeness relation is
\[
x\sim_{X}y\iff\prescript{\ast}{}{d_{X}}\left(x,y\right)\text{ is finite}.
\]
This construction works even when the metric function is allowed to
take the value $+\infty$.
\end{example}

\begin{example}
Let $\varGamma$ be a bornological group, i.e. a group together with a bornology such that the group operations are bornological. The \emph{left coarse structure}
of $\varGamma$ is the coarse structure generated by the sets of the
form $\set{\left(x,y\right)\in\varGamma\times\varGamma|x^{-1}y\in B}$,
where $B\in\mathcal{B}_{\varGamma}$. The finite closeness relation
is
\[
x\sim_{\varGamma,l}y\iff x\backslash y\left(=x^{-1}y\right)\text{ is finite}.
\]
Similarly, the \emph{right coarse structure} of $\varGamma$ is the
coarse structure generated by the sets of the form $\set{\left(x,y\right)\in\varGamma\times\varGamma|xy^{-1}\in B}$,
where $B\in\mathcal{B}_{\varGamma}$. The finite closeness relation
is
\[
x\sim_{\varGamma,r}y\iff x/y\left(=xy^{-1}\right)\text{ is finite}.
\]
If $\varGamma$ is commutative, the left and the right structures
coincide.
\end{example}

\begin{example}
Let $X$ be a coarse space and $A$ a subset of $X$. The \emph{subspace
coarse structure} of $A$ is the restriction $\mathcal{C}_{X}\restriction A=\set{E\cap A^{2}|E\in\mathcal{C}_{X}}$.
The finite closeness relation $\sim_{A}$ is equal to the restriction
${\sim_{X}}\cap\prescript{\ast}{}{A}^{2}$.
\end{example}

\begin{example}
Let $X$ and $Y$ be coarse spaces. The \emph{product coarse structure}
on $X\times Y$ is
\[
\mathcal{C}_{X\times Y}=\set{E\subseteq\left(X\times Y\right)^{2}|\pi_{X^{2}}\left(E\right)\in\mathcal{C}_{X}\text{ and }\pi_{Y^{2}}\left(E\right)\in\mathcal{C}_{Y}},
\]
where $\pi_{X^{2}}\colon\left(X\times Y\right)^{2}\to X^{2}$ and
$\pi_{Y^{2}}\colon\left(X\times Y\right)^{2}\to Y^{2}$ are the canonical
projections. The finite closeness relation is
\[
\left(x,y\right)\sim_{X\times Y}\left(x',y'\right)\iff x\sim_{X}x'\text{ and }y\sim_{Y}y'.
\]
\end{example}

\begin{example}
Let $\set{X_{i}}_{i\in I}$ be a family of coarse spaces. The \emph{sum
coarse structure} on $S=\coprod_{i\in I}X_{i}$ is $\mathcal{C}_{S}=\bigcup_{i\in I}\mathcal{C}_{X_{i}}$.
The finite closeness relation is
\[
u\sim_{S}v\iff u=v,\text{ or }u,v\in\prescript{\ast}{}{X_{i}}\text{ and }u\sim_{X_{i}}v\text{ for some }i\in I.
\]
\end{example}

\subsection{Bornologous maps}
\begin{defn}[Standard]
Let $X$ and $Y$ be coarse spaces. A map $f\colon X\to Y$ is said
to be \emph{bornologous} if $\set{\left(f\left(x\right),f\left(y\right)\right)|\left(x,y\right)\in E}\in\mathcal{C}_{Y}$
holds for any $E\in\mathcal{C}_{X}$.
\end{defn}
\begin{thm}
\label{thm:nonst-characterisation-of-bornologous}Let $X$ and $Y$
be coarse spaces and let $f\colon X\to Y$ be a map. The following
are equivalent:

\begin{enumerate}
\item $f$ is bornologous;
\item for any $x,y\in\prescript{\ast}{}{X}$, $x\sim_{X}y$ implies $\prescript{\ast}{}{f}\left(x\right)\sim_{Y}\prescript{\ast}{}{f}\left(y\right)$;
\item for any $x\in\prescript{\ast}{}{X}$, $\prescript{\ast}{}{f}\left(G_{X}^{c}\left(x\right)\right)\subseteq G_{Y}^{c}\left(\prescript{\ast}{}{f}\left(x\right)\right)$.
\end{enumerate}
\end{thm}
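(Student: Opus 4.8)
The plan is to handle the equivalences through the cycle $(1)\Rightarrow(2)\Leftrightarrow(3)$ together with $(2)\Rightarrow(1)$, using throughout the transfer principle and \prettyref{prop:nonst-characterisation-of-controlled-sets}. The equivalence $(2)\Leftrightarrow(3)$ is merely a matter of unfolding notation: since $G_{X}^{c}\left(x\right)=\set{y\in\prescript{\ast}{}{X}|x\sim_{X}y}$ and likewise for $Y$, the inclusion $\prescript{\ast}{}{f}\left(G_{X}^{c}\left(x\right)\right)\subseteq G_{Y}^{c}\left(\prescript{\ast}{}{f}\left(x\right)\right)$ holds for a fixed $x$ precisely when $x\sim_{X}y$ implies $\prescript{\ast}{}{f}\left(x\right)\sim_{Y}\prescript{\ast}{}{f}\left(y\right)$ for every $y\in\prescript{\ast}{}{X}$, and quantifying over $x$ yields $(2)\Leftrightarrow(3)$.

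For $(1)\Rightarrow(2)$, I would introduce the product map $f\times f\colon X\times X\to Y\times Y$, so that the set $\set{\left(f\left(x\right),f\left(y\right)\right)|\left(x,y\right)\in E}$ occurring in the definition of bornologous is exactly $\left(f\times f\right)\left(E\right)$. Given $x\sim_{X}y$, choose $E\in\mathcal{C}_{X}$ with $\left(x,y\right)\in\prescript{\ast}{}{E}$. Bornologicity gives $\left(f\times f\right)\left(E\right)\in\mathcal{C}_{Y}$, and transfer yields $\prescript{\ast}{}{\left(\left(f\times f\right)\left(E\right)\right)}=\left(\prescript{\ast}{}{f}\times\prescript{\ast}{}{f}\right)\left(\prescript{\ast}{}{E}\right)$; evaluating at $\left(x,y\right)$ shows $\left(\prescript{\ast}{}{f}\left(x\right),\prescript{\ast}{}{f}\left(y\right)\right)\in\prescript{\ast}{}{\left(\left(f\times f\right)\left(E\right)\right)}\subseteq{\sim_{Y}}$, hence $\prescript{\ast}{}{f}\left(x\right)\sim_{Y}\prescript{\ast}{}{f}\left(y\right)$.

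For $(2)\Rightarrow(1)$, let $E\in\mathcal{C}_{X}$ be arbitrary and put $E'=\left(f\times f\right)\left(E\right)$. By \prettyref{prop:nonst-characterisation-of-controlled-sets} it is enough to check $\prescript{\ast}{}{E'}\subseteq{\sim_{Y}}$. Again by transfer, every element of $\prescript{\ast}{}{E'}$ has the form $\left(\prescript{\ast}{}{f}\left(x\right),\prescript{\ast}{}{f}\left(y\right)\right)$ for some $\left(x,y\right)\in\prescript{\ast}{}{E}$; since $E\in\mathcal{C}_{X}$ we have $\prescript{\ast}{}{E}\subseteq{\sim_{X}}$, so $x\sim_{X}y$, and $(2)$ gives $\prescript{\ast}{}{f}\left(x\right)\sim_{Y}\prescript{\ast}{}{f}\left(y\right)$. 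Thus $E'\in\mathcal{C}_{Y}$, and $f$ is bornologous.

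The argument is essentially routine; the only point demanding care is the bookkeeping around transfer of the image set $\left(f\times f\right)\left(E\right)$, where one uses that $\prescript{\ast}{}{\left(f\times f\right)}$ coincides with $\prescript{\ast}{}{f}\times\prescript{\ast}{}{f}$ and that the $\ast$-transform of a standard image set is the internal image of the transform. Unlike the bornological analogues in \prettyref{sec:Bornology}, no appeal to an approximation lemma or to saturation is needed here.
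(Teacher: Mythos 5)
Your proof is correct and follows essentially the same route as the paper's. The treatment of $(2)\Leftrightarrow(3)$ and $(1)\Rightarrow(2)$ is the same as in the paper, up to notation. The only real departure is in $(2)\Rightarrow(1)$: the paper invokes the \nameref{lem:coarse-approximation-lemma} directly to get an internal $F\in\prescript{\ast}{}{\mathcal{C}_Y}$ containing ${\sim_Y}$, and then transfers; you instead route through \prettyref{prop:nonst-characterisation-of-controlled-sets} to conclude that $\prescript{\ast}{}{E'}\subseteq{\sim_Y}$ forces $E'\in\mathcal{C}_Y$. These are logically the same move packaged at different levels, since that proposition is itself proved by the approximation lemma.

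One remark worth flagging: your closing claim that ``no appeal to an approximation lemma or to saturation is needed here'' is not accurate. Your argument for $(2)\Rightarrow(1)$ depends on \prettyref{prop:nonst-characterisation-of-controlled-sets}, whose proof rests on the \nameref{lem:coarse-approximation-lemma}, which in turn uses weak saturation. So saturation \emph{is} needed, just hidden one level down. The direction $(1)\Rightarrow(2)$ genuinely needs only transfer, and $(2)\Leftrightarrow(3)$ needs nothing, but $(2)\Rightarrow(1)$ does not avoid the enlargement hypothesis.
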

\begin{proof}
$\left(1\right)\Rightarrow\left(2\right)$: Let $E\in\mathcal{C}_{X}$
be such that $\left(x,y\right)\in\prescript{\ast}{}{E}$. Since $f$
is bornologous, $F=\set{\left(f\left(x'\right),f\left(y'\right)\right)|\left(x',y'\right)\in E}$
is in $\mathcal{C}_{Y}$. By transfer, we have $\left(\prescript{\ast}{}{f}\left(x\right),\prescript{\ast}{}{f}\left(y\right)\right)\in\prescript{\ast}{}{F}$.
Hence $\prescript{\ast}{}{f}\left(x\right)\sim_{Y}\prescript{\ast}{}{f}\left(y\right)$.

$\left(2\right)\Rightarrow\left(1\right)$: Let $E\in\mathcal{C}_{X}$.
By \nameref{lem:coarse-approximation-lemma}, there exists an $F\in\prescript{\ast}{}{\mathcal{C}_{Y}}$
with $\left(\sim_{Y}\right)\subseteq F$. For each $\left(x,y\right)\in\prescript{\ast}{}{E}$,
since $x\sim_{X}y$, we have that $\prescript{\ast}{}{f}\left(x\right)\sim_{Y}\prescript{\ast}{}{f}\left(y\right)$,
so $\left(\prescript{\ast}{}{f}\left(x\right),\prescript{\ast}{}{f}\left(y\right)\right)\in F$.
Hence $\set{\left(f\left(x\right),f\left(y\right)\right)|\left(x,y\right)\in E}\in\mathcal{C}_{Y}$
by transfer.

$\left(2\right)\Leftrightarrow\left(3\right)$: Trivial.
\end{proof}
\begin{prop}[Standard]
Every bornologous map $f\colon X\to Y$ is bornological.
\end{prop}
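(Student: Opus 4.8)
The claim is that every bornologous map $f\colon X\to Y$ between coarse spaces is bornological with respect to the induced prebornologies. I would argue directly at the level of finite closeness relations, using the characterisations already established. Recall from \prettyref{prop:galaxy=00003Dcoarse-galaxy} that for a coarse space with its induced prebornology, the ordinary galaxy $G_{X}\left(x\right)$ agrees with the coarse galaxy $G_{X}^{c}\left(x\right)$ for every $x\in X$. So to prove $f$ is bornological at an arbitrary point $x\in X$, by \prettyref{thm:nonst-characterisation-of-bornological-map} it suffices to show $\prescript{\ast}{}{f}\left(G_{X}^{c}\left(x\right)\right)\subseteq G_{Y}^{c}\left(f\left(x\right)\right)$.

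The plan is then: fix $x\in X$ and take any $z\in G_{X}^{c}\left(x\right)$, i.e. $z\in\prescript{\ast}{}{X}$ with $x\sim_{X}z$. Since $f$ is bornologous, the implication in part (2) of \prettyref{thm:nonst-characterisation-of-bornologous} gives $\prescript{\ast}{}{f}\left(x\right)\sim_{Y}\prescript{\ast}{}{f}\left(z\right)$, that is, $\prescript{\ast}{}{f}\left(z\right)\in G_{Y}^{c}\left(\prescript{\ast}{}{f}\left(x\right)\right)$. But $x\in X$ is standard, so $\prescript{\ast}{}{f}\left(x\right)=f\left(x\right)$, and hence $\prescript{\ast}{}{f}\left(z\right)\in G_{Y}^{c}\left(f\left(x\right)\right)$. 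This establishes the desired inclusion $\prescript{\ast}{}{f}\left(G_{X}^{c}\left(x\right)\right)\subseteq G_{Y}^{c}\left(f\left(x\right)\right)$. Rewriting both coarse galaxies at standard points as ordinary galaxies via \prettyref{prop:galaxy=00003Dcoarse-galaxy} yields $\prescript{\ast}{}{f}\left(G_{X}\left(x\right)\right)\subseteq G_{Y}\left(f\left(x\right)\right)$, so $f$ is bornological at $x$ by \prettyref{thm:nonst-characterisation-of-bornological-map}; since $x$ was arbitrary, $f$ is bornological.

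There is essentially no obstacle here — the work has all been front-loaded into the equivalences of \prettyref{thm:nonst-characterisation-of-bornologous}, the translation \prettyref{prop:galaxy=00003Dcoarse-galaxy}, and the characterisation \prettyref{thm:nonst-characterisation-of-bornological-map}. The only point requiring a moment's care is that the coarse galaxy $G_{X}^{c}$ is a priori defined on all of $\prescript{\ast}{}{X}$ whereas the galaxy map $G_{X}$ is only defined on $X$, so one must restrict attention to standard base points $x\in X$ before invoking \prettyref{prop:galaxy=00003Dcoarse-galaxy}; this is exactly what the definition of ``bornological at $x$'' requires anyway, so it causes no trouble. An alternative, equally short route avoids the galaxy translation entirely: show that $f$ maps bounded sets to bounded sets (hence the direct image sends $\mathcal{BN}_X(x)$ into $\mathcal{BN}_Y(f(x))$) by taking a bounded $B\subseteq X$, noting $B\times B\in\mathcal{C}_X$, applying bornologousness to get $\{(f(x),f(y)) : (x,y)\in B\times B\} = f(B)\times f(B)\in\mathcal{C}_Y$, i.e. $f(B)$ is bounded in $Y$. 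I would likely present this second argument as it is the most transparent and uses only the definitions plus \prettyref{rem:Adjoint}'s induced prebornology.
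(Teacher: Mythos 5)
Your first argument is essentially the paper's proof: identify $G_{X}(x)$ with $G_{X}^{c}(x)$ at a standard point $x$ via \prettyref{prop:galaxy=00003Dcoarse-galaxy}, pass through \prettyref{thm:nonst-characterisation-of-bornologous} to get $\prescript{\ast}{}{f}(G_{X}^{c}(x))\subseteq G_{Y}^{c}(f(x))$, and conclude by \prettyref{thm:nonst-characterisation-of-bornological-map}. Your second, preferred argument is a genuinely different and more elementary route that dispenses with the nonstandard apparatus entirely: if $B\subseteq X$ is bounded then $B\times B\in\mathcal{C}_{X}$, bornologousness gives $\set{(f(u),f(v))|(u,v)\in B\times B}=f(B)\times f(B)\in\mathcal{C}_{Y}$, so $f(B)$ is bounded in $Y$; and since $x\in B$ forces $f(x)\in f(B)$, the direct image of $f$ carries $\mathcal{BN}_{X}(x)$ into $\mathcal{BN}_{Y}(f(x))$ for every $x$, which is exactly the definition of bornological. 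That standard argument is shorter and self-contained; what the paper's proof buys instead is a worked illustration of the nonstandard characterisations that this section is built around. Both arguments are correct, so there is no gap.
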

\begin{proof}
Let $x\in X$. Then, by \prettyref{prop:galaxy=00003Dcoarse-galaxy}
and \prettyref{thm:nonst-characterisation-of-bornologous}, $\prescript{\ast}{}{f}\left(G_{X}\left(x\right)\right)=\prescript{\ast}{}{f}\left(G_{X}^{c}\left(x\right)\right)\subseteq G_{Y}^{c}\left(f\left(x\right)\right)=G_{Y}\left(f\left(x\right)\right)$.
By \prettyref{thm:nonst-characterisation-of-bornological-map}, $f$
is bornological at $x$.
\end{proof}
\begin{defn}[Standard]
Let $X$ be any set and $Y$ a coarse space. Two maps $f,g\colon X\to Y$
are said to be \emph{bornotopic} if $\set{\left(f\left(x\right),g\left(x\right)\right)|x\in X}\in\mathcal{C}_{Y}$.
\end{defn}
\begin{prop}[Standard]
Let $X$ and $Y$ be coarse spaces such that $X$ is connected. Let
$f\colon X\to Y$ and $g\colon Y\to X$.
\begin{enumerate}
\item If $g$ is bornologous, and if $g\circ f$ is bornotopic to $\mathrm{id}_{X}$,
then $f$ is proper.
\item If $f$ is proper, and if $f\circ g$ is bornotopic to $\mathrm{id}_{Y}$,
then $g$ is bornological.
\end{enumerate}
\end{prop}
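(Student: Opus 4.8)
The plan is to reduce both statements to the finite-point criteria of \prettyref{thm:nonst-characterisations-of-bornological-and-proper}. First I would set up the nonstandard dictionary. Since a coarse space is tacitly equipped with its induced prebornology, \prettyref{prop:galaxy=00003Dcoarse-galaxy} gives $G_{X}=G_{X}^{c}$, so $\mathrm{FIN}\left(X\right)=\set{u\in\prescript{\ast}{}{X}|u\sim_{X}x\text{ for some }x\in X}$, and likewise for $Y$. By \prettyref{thm:nonst-characterisation-of-bornologous}, ``$g$ is bornologous'' means $v\sim_{Y}v'\Rightarrow\prescript{\ast}{}{g}\left(v\right)\sim_{X}\prescript{\ast}{}{g}\left(v'\right)$. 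Applying \prettyref{prop:nonst-characterisation-of-controlled-sets} to the graph $\set{\left(h_{1}\left(z\right),h_{2}\left(z\right)\right)|z\in Z}$ of a pair of maps $h_{1},h_{2}\colon Z\to W$, ``$h_{1}$ is bornotopic to $h_{2}$'' means $\prescript{\ast}{}{h_{1}}\left(w\right)\sim_{W}\prescript{\ast}{}{h_{2}}\left(w\right)$ for all $w\in\prescript{\ast}{}{Z}$. Finally, $\prescript{\ast}{}{\left(g\circ f\right)}=\prescript{\ast}{}{g}\circ\prescript{\ast}{}{f}$ by transfer, and $\prescript{\ast}{}{g}\left(y\right)=g\left(y\right)\in X$ for standard $y$.

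For (1): as $X$ is connected, by the converse half of \prettyref{thm:nonst-characterisations-of-bornological-and-proper}(3) it is enough to prove $\prescript{\ast}{}{f}^{-1}\left(\mathrm{FIN}\left(Y\right)\right)\subseteq\mathrm{FIN}\left(X\right)$. Take $u\in\prescript{\ast}{}{X}$ with $\prescript{\ast}{}{f}\left(u\right)\in\mathrm{FIN}\left(Y\right)$ and fix $y\in Y$ with $\prescript{\ast}{}{f}\left(u\right)\sim_{Y}y$. Since $g$ is bornologous, $\prescript{\ast}{}{g}\left(\prescript{\ast}{}{f}\left(u\right)\right)\sim_{X}\prescript{\ast}{}{g}\left(y\right)=g\left(y\right)$; since $g\circ f$ is bornotopic to $\mathrm{id}_{X}$, $u\sim_{X}\prescript{\ast}{}{\left(g\circ f\right)}\left(u\right)=\prescript{\ast}{}{g}\left(\prescript{\ast}{}{f}\left(u\right)\right)$. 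By transitivity of $\sim_{X}$ we get $u\sim_{X}g\left(y\right)$, hence $u\in\mathrm{FIN}\left(X\right)$, and therefore $f$ is proper.

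For (2): again as $X$ is connected, by the converse half of \prettyref{thm:nonst-characterisations-of-bornological-and-proper}(1) applied to $g\colon Y\to X$ (whose codomain $X$ is connected) it suffices to prove $\prescript{\ast}{}{g}\left(\mathrm{FIN}\left(Y\right)\right)\subseteq\mathrm{FIN}\left(X\right)$. Take $v\in\mathrm{FIN}\left(Y\right)$ and fix $y\in Y$ with $v\sim_{Y}y$. Since $f\circ g$ is bornotopic to $\mathrm{id}_{Y}$, $\prescript{\ast}{}{f}\left(\prescript{\ast}{}{g}\left(v\right)\right)=\prescript{\ast}{}{\left(f\circ g\right)}\left(v\right)\sim_{Y}v\sim_{Y}y$, so $\prescript{\ast}{}{f}\left(\prescript{\ast}{}{g}\left(v\right)\right)\in\mathrm{FIN}\left(Y\right)$, that is, $\prescript{\ast}{}{g}\left(v\right)\in\prescript{\ast}{}{f}^{-1}\left(\mathrm{FIN}\left(Y\right)\right)$. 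Since $f$ is proper, the forward half of \prettyref{thm:nonst-characterisations-of-bornological-and-proper}(3) gives $\prescript{\ast}{}{f}^{-1}\left(\mathrm{FIN}\left(Y\right)\right)\subseteq\mathrm{FIN}\left(X\right)$, so $\prescript{\ast}{}{g}\left(v\right)\in\mathrm{FIN}\left(X\right)$. Hence $g$ is bornological.

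I expect no computational obstacle: once the dictionary above is in place, the arguments are just two transitivity chains for $\sim_{X}$ and $\sim_{Y}$, together with $\prescript{\ast}{}{\left(g\circ f\right)}=\prescript{\ast}{}{g}\circ\prescript{\ast}{}{f}$. The two points needing care are getting the nonstandard reading of bornotopy right (it is exactly \prettyref{prop:nonst-characterisation-of-controlled-sets} applied to the pair's graph) and choosing the correct direction of \prettyref{thm:nonst-characterisations-of-bornological-and-proper} in each part: both conclusions rely on the converse halves of that theorem, which is precisely where the hypothesis that $X$ is connected enters.
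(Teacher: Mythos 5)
Your proof is correct and uses essentially the same approach as the paper: reduce to the FIN/INF criteria of Theorem~\ref{thm:nonst-characterisations-of-bornological-and-proper} and trace $\sim$-chains via the nonstandard reading of bornotopy. The only cosmetic differences are that the paper argues part (1) in contrapositive form with $\mathrm{INF}$ points (using parts (2) and (4) of that theorem, and the fact that bornologous maps are bornological) whereas you use the $\mathrm{FIN}$ form (parts (1) and (3)), and the paper dismisses (2) as ``dual'' while you write it out.
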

\begin{proof}
We only prove (1), because (2) can be proved dually. Let $x\in\mathrm{INF}\left(X\right)$.
Since $g\circ f$ is bornotopic to $\mathrm{id}_{X}$, we have that
$\prescript{\ast}{}{g}\left(\prescript{\ast}{}{f}\left(x\right)\right)\sim_{X}x$.
It follows that $\prescript{\ast}{}{g}\left(\prescript{\ast}{}{f}\left(x\right)\right)\in\mathrm{INF}\left(X\right)$.
Since $g$ is bornological, and by \prettyref{thm:nonst-characterisations-of-bornological-and-proper},
it must hold that $\prescript{\ast}{}{f}\left(x\right)\in\mathrm{INF}\left(Y\right)$.
Again by \prettyref{thm:nonst-characterisations-of-bornological-and-proper},
$f$ is proper.
\end{proof}

\subsection{Equibornologous families}
\begin{defn}[Standard]
Let $X$ and $Y$ be coarse spaces. Let $\mathcal{F}$ be a subset
of $Y^{X}$. For $E\subseteq X\times X$, we denote $\mathcal{F}\left(E\right)=\set{\left(f\left(x\right),f\left(y\right)\right)|f\in\mathcal{F},\left(x,y\right)\in E}$.
$\mathcal{F}$ is said to be\emph{ equibornologous} if $\mathcal{F}\left(E\right)\in\mathcal{C}_{Y}$
for all $E\in\mathcal{C}_{X}$.
\end{defn}
\begin{thm}
Let $X$ and $Y$ be coarse spaces and let $\mathcal{F}\subseteq Y^{X}$.
The following are equivalent:
\begin{enumerate}
\item $\mathcal{F}$ is equibornologous;
\item for any $f\in\prescript{\ast}{}{\mathcal{F}}$ and $x,y\in\prescript{\ast}{}{X}$,
$x\sim_{X}y$ implies $f\left(x\right)\sim_{Y}f\left(y\right)$.
\end{enumerate}
\end{thm}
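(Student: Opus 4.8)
The plan is to adapt the proof of \prettyref{thm:nonst-characterisation-of-bornologous} essentially verbatim, the only change being that the single map $f$ is replaced by an arbitrary member of the enlargement $\prescript{\ast}{}{\mathcal{F}}$. The one point that deserves care is the behaviour of $\mathcal{F}\left(E\right)$ under the $\ast$-map: since $\mathcal{F}\left(E\right)=\set{\left(g\left(x\right),g\left(y\right)\right)|g\in\mathcal{F}\text{ and }\left(x,y\right)\in E}$ is defined by a formula quantifying over both the family and the relation, transfer yields
\[
\prescript{\ast}{}{\left(\mathcal{F}\left(E\right)\right)}=\set{\left(f\left(x\right),f\left(y\right)\right)|f\in\prescript{\ast}{}{\mathcal{F}}\text{ and }\left(x,y\right)\in\prescript{\ast}{}{E}}.
\]
With this identity in hand, everything else is a direct rephrasing via \prettyref{prop:nonst-characterisation-of-controlled-sets}.

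For $\left(1\right)\Rightarrow\left(2\right)$ I would assume $\mathcal{F}$ is equibornologous, fix $f\in\prescript{\ast}{}{\mathcal{F}}$ and $x,y\in\prescript{\ast}{}{X}$ with $x\sim_{X}y$, and choose $E\in\mathcal{C}_{X}$ with $\left(x,y\right)\in\prescript{\ast}{}{E}$. Then $\mathcal{F}\left(E\right)\in\mathcal{C}_{Y}$ by hypothesis, so $\prescript{\ast}{}{\left(\mathcal{F}\left(E\right)\right)}\subseteq{\sim_{Y}}$; by the displayed identity $\left(f\left(x\right),f\left(y\right)\right)\in\prescript{\ast}{}{\left(\mathcal{F}\left(E\right)\right)}$, hence $f\left(x\right)\sim_{Y}f\left(y\right)$.

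For $\left(2\right)\Rightarrow\left(1\right)$ I would fix $E\in\mathcal{C}_{X}$ and, by \prettyref{prop:nonst-characterisation-of-controlled-sets}, reduce to showing $\prescript{\ast}{}{\left(\mathcal{F}\left(E\right)\right)}\subseteq{\sim_{Y}}$. An arbitrary element of $\prescript{\ast}{}{\left(\mathcal{F}\left(E\right)\right)}$ has, by the displayed identity, the form $\left(f\left(x\right),f\left(y\right)\right)$ with $f\in\prescript{\ast}{}{\mathcal{F}}$ and $\left(x,y\right)\in\prescript{\ast}{}{E}$; since $E\in\mathcal{C}_{X}$ we have $x\sim_{X}y$, and then $\left(2\right)$ gives $f\left(x\right)\sim_{Y}f\left(y\right)$, i.e. $\left(f\left(x\right),f\left(y\right)\right)\in{\sim_{Y}}$. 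Thus $\mathcal{F}\left(E\right)\in\mathcal{C}_{Y}$.

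I do not expect a genuine obstacle here; the only thing to get right is the exact form of $\prescript{\ast}{}{\left(\mathcal{F}\left(E\right)\right)}$ above, everything else being a routine application of the controlled-set criterion. If one prefers to avoid quoting \prettyref{prop:nonst-characterisation-of-controlled-sets} in the second implication, it can instead be handled by the \nameref{lem:coarse-approximation-lemma}, picking $F\in\prescript{\ast}{}{\mathcal{C}_{Y}}$ with ${\sim_{Y}}\subseteq F$ exactly as in the proof of \prettyref{thm:nonst-characterisation-of-bornologous}.
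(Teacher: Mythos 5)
Your proposal is correct and follows essentially the same route as the paper: both directions hinge on the transfer identity for $\prescript{\ast}{}{\left(\mathcal{F}\left(E\right)\right)}$ together with \prettyref{prop:nonst-characterisation-of-controlled-sets}. The only cosmetic difference is that the paper proves $\left(2\right)\Rightarrow\left(1\right)$ by contraposition, whereas you argue directly; the content is identical.
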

\begin{proof}
$\left(1\right)\Rightarrow\left(2\right)$: By \prettyref{prop:nonst-characterisation-of-controlled-sets}
there exists a controlled set $E$ of $X$ such that $\left(x,y\right)\in\prescript{\ast}{}{E}$.
By transfer, $\left(f\left(x\right),f\left(y\right)\right)\in\prescript{\ast}{}{\mathcal{F}}\left(B\right)$
holds. Since $\mathcal{F}\left(E\right)\in\mathcal{C}_{Y}$, by \prettyref{prop:nonst-characterisation-of-controlled-sets},
$f\left(x\right)\sim_{Y}f\left(y\right)$ holds.

$\left(2\right)\Rightarrow\left(1\right)$: Suppose that $\mathcal{F}$
is not equibornologous. There exists a controlled set $E$ of $X$
such that $\mathcal{F}\left(E\right)$ is not a controlled set of
$Y$. By \prettyref{prop:nonst-characterisation-of-controlled-sets},
there are $\left(y,y'\right)\in\prescript{\ast}{}{\left(\mathcal{F}\left(E\right)\right)}$
such that $y\nsim_{Y}y'$. By transfer, $\left(y,y'\right)=\left(f\left(x\right),f\left(x'\right)\right)$
holds for some $f\in\prescript{\ast}{}{\mathcal{F}}$ and $\left(x,x'\right)\in\prescript{\ast}{}{E}$.
The latter implies that $x\sim_{X}x'$.
\end{proof}

\subsection{Compatibility of uniform and coarse structures}
\begin{defn}[Standard]
Let $X$ be a set. A uniformity $\mathcal{U}_{X}$ and a coarse structure
$\mathcal{C}_{X}$ on $X$ are said to be \emph{compatible} if $X$
is uniformly locally bounded, i.e., $\mathcal{U}_{X}\cap\mathcal{C}_{X}\neq\varnothing$.
\end{defn}
\begin{thm}
\label{thm:compatibility-of-unif-and-coarse}Let $X$ be a uniform
space with a coarse structure. The following are equivalent:
\begin{enumerate}
\item $X$ is uniformly locally bounded;
\item $x\approx_{X}y$ implies $x\sim_{X}y$ for all $x,y\in\prescript{\ast}{}{X}$.
\end{enumerate}
\end{thm}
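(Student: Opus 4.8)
The plan is to observe that condition (2) is exactly the inclusion of relations ${\approx_X}\subseteq{\sim_X}$ on $\prescript{\ast}{}{X}$, and then to play off the two presentations ${\approx_X}=\bigcap_{U\in\mathcal{U}_X}\prescript{\ast}{}{U}$ and ${\sim_X}=\bigcup_{E\in\mathcal{C}_X}\prescript{\ast}{}{E}$ against each other.

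For $\left(1\right)\Rightarrow\left(2\right)$ I would take an $E\in\mathcal{U}_X\cap\mathcal{C}_X$: because $E$ is an entourage, ${\approx_X}\subseteq\prescript{\ast}{}{E}$ straight from the definition of $\approx_X$, and because $E$ is controlled, $\prescript{\ast}{}{E}\subseteq{\sim_X}$ straight from the definition of $\sim_X$. Composing these inclusions gives ${\approx_X}\subseteq{\sim_X}$, which is $\left(2\right)$. No saturation is needed here.

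For $\left(2\right)\Rightarrow\left(1\right)$, the substantive direction, I would first invoke the \nameref{lem:coarse-approximation-lemma} to fix an internal $F\in\prescript{\ast}{}{\mathcal{C}_X}$ with ${\sim_X}\subseteq F$, so that $\left(2\right)$ yields $\bigcap_{U\in\mathcal{U}_X}\prescript{\ast}{}{U}={\approx_X}\subseteq F$, that is, $\bigcap_{U\in\mathcal{U}_X}\left(\prescript{\ast}{}{U}\setminus F\right)=\varnothing$. This is an empty intersection of a family of internal sets indexed by $\mathcal{U}_X$, hence of cardinality at most $\left|\mathbb{U}\right|$, so by saturation some finite subfamily already has empty intersection. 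Since $\mathcal{U}_X$ is closed under finite intersections and $\ast$ commutes with them, this produces a single $U\in\mathcal{U}_X$ with $\prescript{\ast}{}{U}\subseteq F$. Finally, downward closedness of $\mathcal{C}_X$ transfers to downward closedness of $\prescript{\ast}{}{\mathcal{C}_X}$, so $\prescript{\ast}{}{U}\subseteq F\in\prescript{\ast}{}{\mathcal{C}_X}$ forces $\prescript{\ast}{}{U}\in\prescript{\ast}{}{\mathcal{C}_X}$, and transferring back gives $U\in\mathcal{C}_X$; thus $U\in\mathcal{U}_X\cap\mathcal{C}_X$ and $X$ is uniformly locally bounded.

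The one step that carries real content---the expected main obstacle---is the passage from ${\approx_X}\subseteq F$ to an honest \emph{standard} entourage contained in $F$: knowing that the intersection of all the $\prescript{\ast}{}{U}$ lands inside $F$ does not by itself return any individual standard $U$, and it is precisely here that saturation together with the filter property of $\mathcal{U}_X$ (plus transfer of the downward closedness of $\mathcal{C}_X$) does the work. Everything else is unwinding the definitions of $\approx_X$ and $\sim_X$ and routine transfer.
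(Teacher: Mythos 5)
Your proof is correct, and the forward direction $\left(1\right)\Rightarrow\left(2\right)$ is the same as the paper's. For $\left(2\right)\Rightarrow\left(1\right)$ you take a genuinely different route. The paper, after invoking the \nameref{lem:coarse-approximation-lemma} to get an internal $E\in\prescript{\ast}{}{\mathcal{C}_X}$ with ${\sim_X}\subseteq E$, also invokes the dual \emph{uniform} approximation (by weak saturation applied to the filter $\mathcal{U}_X$) to produce an internal $U\in\prescript{\ast}{}{\mathcal{U}_X}$ with $U\subseteq{\approx_X}$; the chain $U\subseteq{\approx_X}\subseteq{\sim_X}\subseteq E$ together with the (transferred) upward closedness of $\prescript{\ast}{}{\mathcal{U}_X}$ puts $E$ in $\prescript{\ast}{}{\left(\mathcal{U}_X\cap\mathcal{C}_X\right)}$, and a final transfer of the existential finishes. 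You instead replace the dual approximation lemma with a direct saturation argument on the family $\set{\prescript{\ast}{}{U}\setminus F\mid U\in\mathcal{U}_X}$, extract a \emph{standard} entourage $U$ with $\prescript{\ast}{}{U}\subseteq F$, and push $\prescript{\ast}{}{U}$ \emph{down} into $\prescript{\ast}{}{\mathcal{C}_X}$ using transferred downward closedness of the coarse structure. The two arguments are dual in flavour (paper: push the controlled set up the uniformity; you: push the entourage down the coarse structure), and yours has the minor bonus of exhibiting an explicit standard witness $U\in\mathcal{U}_X\cap\mathcal{C}_X$ rather than transferring a nonemptiness statement. The one real cost is that your step from an empty intersection of internal sets to an empty finite subintersection needs the \emph{full} saturation property (the sets $\prescript{\ast}{}{U}\setminus F$ are internal but not stars of standard sets, since $F$ is internal), whereas the paper's proof, like the \nameref{lem:coarse-approximation-lemma} itself, gets by with weak saturation (the enlargement property) alone---a distinction the paper is careful to maintain elsewhere. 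Under the hypothesis of a $\left|\mathbb{U}\right|^{+}$-saturated enlargement both are available, so the difference is one of economy of hypothesis rather than correctness.
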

\begin{proof}
$\left(1\right)\Rightarrow\left(2\right)$: Let $U\in\mathcal{U}_{X}\cap\mathcal{C}_{X}$.
By the nonstandard characterisation of entourages (cf. \cite[Observation 8.4.25]{SL76}),
we know that ${\approx_{X}}\subseteq\prescript{\ast}{}{U}$. By \prettyref{prop:nonst-characterisation-of-controlled-sets},
$\prescript{\ast}{}{U}\subseteq{\sim_{X}}$. Hence $\approx_{X}$
implies $\sim_{X}$.

$\left(2\right)\Rightarrow\left(1\right)$: By \nameref{lem:coarse-approximation-lemma},
there exists an $E\in\prescript{\ast}{}{\mathcal{C}_{X}}$ such that
${\sim_{X}}\subseteq E$. By weak saturation, there exists an $U\in\prescript{\ast}{}{\mathcal{U}_{X}}$
such that $U\subseteq{\approx_{X}}$. We have that $U\subseteq E$.
By transfer, $E\in\prescript{\ast}{}{\mathcal{U}_{X}}\cap\prescript{\ast}{}{\mathcal{C}_{X}}$,
so $\mathcal{U}_{X}\cap\mathcal{C}_{X}\neq\varnothing$.
\end{proof}

\subsection{Slowly oscillating maps and Higson functions}
\begin{defn}[Standard]
Let $X$ be a connected coarse space and let $Y$ be a uniform space.
A map $\varphi\colon X\to Y$ is said to be \emph{slowly oscillating}
if for every $E\in\mathcal{C}_{X}$ and for every $U\in\mathcal{U}_{Y}$
there is a $B\in\mathcal{B}_{X}$ such that $\left(\varphi\left(x\right),\varphi\left(y\right)\right)\in U$
holds for all $\left(x,y\right)\in E\setminus\left(B\times B\right)$.
\end{defn}
\begin{thm}
\label{thm:nonst-characterisation-of-slow-oscillation}Let $X$, $Y$
and $\varphi$ be the same as above. The following are equivalent:
\begin{enumerate}
\item $\varphi$ is slowly oscillating;
\item for any $x,y\in\mathrm{INF}\left(X\right)$, if $x\sim_{X}y$, then
$\prescript{\ast}{}{\varphi}\left(x\right)\approx_{Y}\prescript{\ast}{}{\varphi}\left(y\right)$.
\end{enumerate}
\end{thm}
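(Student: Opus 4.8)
The plan is to establish the two implications separately, relying on the transfer principle, \prettyref{prop:nonst-characterisation-of-boundedness} and \prettyref{prop:galaxy=00003Dcoarse-galaxy} for the forward direction, and on a saturation argument for the converse.

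\emph{Direction $(1)\Rightarrow(2)$.} Suppose $\varphi$ is slowly oscillating, and let $x,y\in\mathrm{INF}(X)$ with $x\sim_{X}y$; fix $E\in\mathcal{C}_{X}$ with $(x,y)\in\prescript{\ast}{}{E}$. For an arbitrary $U\in\mathcal{U}_{Y}$, slow oscillation gives a $B\in\mathcal{B}_{X}$ such that $(\varphi(x'),\varphi(y'))\in U$ for all $(x',y')\in E\setminus(B\times B)$; by transfer $(\prescript{\ast}{}{\varphi}(x'),\prescript{\ast}{}{\varphi}(y'))\in\prescript{\ast}{}{U}$ for all $(x',y')\in\prescript{\ast}{}{E}\setminus(\prescript{\ast}{}{B}\times\prescript{\ast}{}{B})$. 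The key observation is that $(x,y)$ lies in this last set: either $B=\varnothing$, or, by \prettyref{prop:nonst-characterisation-of-boundedness}, $\prescript{\ast}{}{B}\subseteq G_{X}(b)\subseteq\mathrm{FIN}(X)$ for any $b\in B$; in both cases $x\notin\prescript{\ast}{}{B}$ because $x\in\mathrm{INF}(X)$, so $(x,y)\notin\prescript{\ast}{}{B}\times\prescript{\ast}{}{B}$. Hence $(\prescript{\ast}{}{\varphi}(x),\prescript{\ast}{}{\varphi}(y))\in\prescript{\ast}{}{U}$, and since $U$ was arbitrary, $\prescript{\ast}{}{\varphi}(x)\approx_{Y}\prescript{\ast}{}{\varphi}(y)$.

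\emph{Direction $(2)\Rightarrow(1)$.} I would argue by contraposition. Assume $\varphi$ is not slowly oscillating and fix $E\in\mathcal{C}_{X}$ and $U\in\mathcal{U}_{Y}$ witnessing the failure. For each $B\in\mathcal{B}_{X}$ set
\[
S_{B}=\set{(x,y)\in E\setminus(B\times B)|(\varphi(x),\varphi(y))\notin U},
\]
which is nonempty by the choice of $E$ and $U$. The family $\set{S_{B}|B\in\mathcal{B}_{X}}$ has the finite intersection property: given $B_{1},\dots,B_{n}\in\mathcal{B}_{X}$ their union $B$ is bounded (connectedness of $X$, through the prebornology axioms, makes $\mathcal{B}_{X}$ closed under finite unions), and $S_{B}\subseteq\bigcap_{i}S_{B_{i}}$. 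Since $|\mathcal{B}_{X}|\le|\mathbb{U}|$ and each $\prescript{\ast}{}{S_{B}}$ is internal, saturation yields a pair $(x,y)\in\bigcap_{B\in\mathcal{B}_{X}}\prescript{\ast}{}{S_{B}}$. Then $(x,y)\in\prescript{\ast}{}{E}$, so $x\sim_{X}y$; and by transfer $(\prescript{\ast}{}{\varphi}(x),\prescript{\ast}{}{\varphi}(y))\notin\prescript{\ast}{}{U}$, so $\prescript{\ast}{}{\varphi}(x)\not\approx_{Y}\prescript{\ast}{}{\varphi}(y)$. Thus, once one checks $x,y\in\mathrm{INF}(X)$, this pair contradicts $(2)$.

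The verification that $x$ and $y$ are infinite is the crux of the argument. If, say, $x\in G_{X}(z)$ for some $z\in X$, then by \prettyref{prop:galaxy=00003Dcoarse-galaxy} we have $x\sim_{X}z$, hence $y\sim_{X}z$, hence $y\in G_{X}(z)$ as well; picking $B_{1},B_{2}\in\mathcal{BN}_{X}(z)$ with $x\in\prescript{\ast}{}{B_{1}}$ and $y\in\prescript{\ast}{}{B_{2}}$, the union $B_{0}=B_{1}\cup B_{2}$ is bounded (a non-disjoint union, as $z\in B_{1}\cap B_{2}$) and $x,y\in\prescript{\ast}{}{B_{0}}$, so $(x,y)\in\prescript{\ast}{}{B_{0}}\times\prescript{\ast}{}{B_{0}}$, contradicting $(x,y)\in\prescript{\ast}{}{S_{B_{0}}}$. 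Hence $x\in\mathrm{INF}(X)$, and then $y\in\mathrm{INF}(X)$ too since $y\sim_{X}x$. I expect this step --- excluding the degenerate case in which the witnessing pair stays close to a bounded set --- to be the main obstacle; everything else is transfer, saturation, and \prettyref{prop:nonst-characterisation-of-boundedness}.
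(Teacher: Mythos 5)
Your proof is correct, and while the forward direction is essentially the paper's (with the worthwhile extra detail that $\prescript{\ast}{}{B}\subseteq\mathrm{FIN}(X)$ is justified via \prettyref{prop:nonst-characterisation-of-boundedness}), the backward direction takes a genuinely different route. The paper argues directly: by the \nameref{lem:bornological-approximation-lemma} there is an internal bounded $B'\supseteq\mathrm{FIN}(X)$; every pair in $\prescript{\ast}{}{E}\setminus(B'\times B')$ is then a $\sim_{X}$-close pair of infinite points, so the hypothesis (2) controls all of them; transfer then produces a standard $B$ witnessing slow oscillation for the given $E$ and $U$. You instead work by contraposition: from a failure witness $(E,U)$ you build the standard family $\{S_{B}\}_{B\in\mathcal{B}_{X}}$, show it has the finite intersection property (using connectedness to close $\mathcal{B}_{X}$ under arbitrary finite unions, not merely non-disjoint ones), and extract a counterexample pair by saturation. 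The verification that this pair actually lies in $\mathrm{INF}(X)\times\mathrm{INF}(X)$ is the extra work your route incurs, and you handle it correctly; it is worth noting that the corresponding step in the paper's proof --- namely that $\prescript{\ast}{}{E}\setminus(B'\times B')\subseteq\mathrm{INF}(X)\times\mathrm{INF}(X)$ because $\sim_{X}$-close points are simultaneously finite or infinite --- is left implicit there, so your explicit treatment is a small bonus. One economy you could claim: since $\{S_{B}\}$ is a standard family of standard sets, weak saturation (the enlargement property) already gives $\bigcap_{B}\prescript{\ast}{}{S_{B}}\neq\varnothing$, so you need not appeal to the full $\left|\mathbb{U}\right|^{+}$-saturation; this puts your argument on exactly the same nonstandard footing as the paper's use of the approximation lemma.
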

\begin{proof}
$\left(1\right)\Rightarrow\left(2\right)$: Let $U\in\mathcal{U}_{Y}$.
Choose an $E\in\mathcal{C}_{X}$ such that $\left(x,y\right)\in\prescript{\ast}{}{E}$.
There is a $B\in\mathcal{B}_{X}$ such that $\left(\varphi\left(x'\right),\varphi\left(y'\right)\right)\in U$
for all $\left(x',y'\right)\in E\setminus\left(B\times B\right)$.
Since $x,y\in\mathrm{INF}\left(X\right)$, we have that $x,y\notin\prescript{\ast}{}{B}$.
By transfer, $\left(\varphi\left(x\right),\varphi\left(y\right)\right)\in\prescript{\ast}{}{U}$
holds. Hence $\prescript{\ast}{}{\varphi}\left(x\right)\approx_{Y}\prescript{\ast}{}{\varphi}\left(y\right)$,
because $U$ is arbitrary.

$\left(2\right)\Rightarrow\left(1\right)$: Let $E\in\mathcal{C}_{X}$
and $U\in\mathcal{U}_{Y}$. By \nameref{lem:bornological-approximation-lemma},
we can find a $B'\in\prescript{\ast}{}{\mathcal{B}_{X}}$ such that
$\mathrm{FIN}\left(X\right)\subseteq B'$. By assumption, we have
that $\left(\varphi\left(x\right),\varphi\left(y\right)\right)\in{\approx}_{Y}\subseteq\prescript{\ast}{}{U}$
for all $\left(x,y\right)\in\prescript{\ast}{}{E}\setminus\left(B'\times B'\right)\subseteq\mathrm{INF}\left(X\right)\times\mathrm{INF}\left(X\right)$.
By transfer, there is a $B\in\mathcal{B}_{X}$ such that $\left(\varphi\left(x\right),\varphi\left(y\right)\right)\in U$
holds for all $\left(x,y\right)\in E\setminus\left(B\times B\right)$.
Therefore $\varphi$ is slowly oscillating.
\end{proof}
Let us apply this characterisation to prove some fundamental facts
about slowly oscillating maps.
\begin{prop}[Standard]
Let $X$ and $Y$ be connected coarse spaces, $Z$ and $W$ uniform
spaces, $f\colon X\to Y$ a proper bornologous map, $\varphi\colon Y\to Z$
a slowly oscillating map and $g\colon Z\to W$ a uniformly continuous
map. Then, the composition $g\circ\varphi\circ f\colon X\to W$ is
slowly oscillating.
\end{prop}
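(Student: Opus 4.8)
The plan is to reduce everything to the nonstandard characterisation of slow oscillation in \prettyref{thm:nonst-characterisation-of-slow-oscillation} and then chain together the nonstandard descriptions of the three maps. Since $X$ is a connected coarse space and $W$ is a uniform space, the composition $g\circ\varphi\circ f\colon X\to W$ is of the right type for the definition of slow oscillation, so it suffices to verify condition (2) of that theorem for $g\circ\varphi\circ f$, namely that for any $x,y\in\mathrm{INF}\left(X\right)$ with $x\sim_{X}y$ we have $\prescript{\ast}{}{(g\circ\varphi\circ f)}\left(x\right)\approx_{W}\prescript{\ast}{}{(g\circ\varphi\circ f)}\left(y\right)$.

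First I would fix $x,y\in\mathrm{INF}\left(X\right)$ with $x\sim_{X}y$ and push them through $f$. Since $f$ is bornologous, \prettyref{thm:nonst-characterisation-of-bornologous} gives $\prescript{\ast}{}{f}\left(x\right)\sim_{Y}\prescript{\ast}{}{f}\left(y\right)$. Since $f$ is proper, \prettyref{thm:nonst-characterisations-of-bornological-and-proper}(4) gives that $\prescript{\ast}{}{f}$ maps $\mathrm{INF}\left(X\right)$ into $\mathrm{INF}\left(Y\right)$, so $\prescript{\ast}{}{f}\left(x\right),\prescript{\ast}{}{f}\left(y\right)\in\mathrm{INF}\left(Y\right)$. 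This is the one place where \emph{both} hypotheses on $f$ are used, and it is exactly what is needed so that the slow-oscillation characterisation of $\varphi$ becomes applicable: that characterisation constrains $\prescript{\ast}{}{\varphi}$ only on pairs of \emph{infinite}, finitely close points, so it is essential that the images $\prescript{\ast}{}{f}\left(x\right),\prescript{\ast}{}{f}\left(y\right)$ are infinite.

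Next, applying \prettyref{thm:nonst-characterisation-of-slow-oscillation} to $\varphi$ with the pair $\prescript{\ast}{}{f}\left(x\right),\prescript{\ast}{}{f}\left(y\right)\in\mathrm{INF}\left(Y\right)$, which are finitely close, yields $\prescript{\ast}{}{\varphi}\left(\prescript{\ast}{}{f}\left(x\right)\right)\approx_{Z}\prescript{\ast}{}{\varphi}\left(\prescript{\ast}{}{f}\left(y\right)\right)$. Since $g$ is uniformly continuous, the nonstandard characterisation of uniform continuity (that $\prescript{\ast}{}{g}$ sends $\approx_{Z}$-related points to $\approx_{W}$-related points, cf. the definition of $\approx$ via $\bigcap_{U}\prescript{\ast}{}{U}$ in the preliminaries) gives $\prescript{\ast}{}{g}\left(\prescript{\ast}{}{\varphi}\left(\prescript{\ast}{}{f}\left(x\right)\right)\right)\approx_{W}\prescript{\ast}{}{g}\left(\prescript{\ast}{}{\varphi}\left(\prescript{\ast}{}{f}\left(y\right)\right)\right)$. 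Using $\prescript{\ast}{}{(g\circ\varphi\circ f)}=\prescript{\ast}{}{g}\circ\prescript{\ast}{}{\varphi}\circ\prescript{\ast}{}{f}$ (transfer of composition), this is precisely $\prescript{\ast}{}{(g\circ\varphi\circ f)}\left(x\right)\approx_{W}\prescript{\ast}{}{(g\circ\varphi\circ f)}\left(y\right)$, i.e. condition (2) of \prettyref{thm:nonst-characterisation-of-slow-oscillation} for $g\circ\varphi\circ f$. Hence $g\circ\varphi\circ f$ is slowly oscillating.

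There is no genuine obstacle here; the argument is a straight composition of three already-established nonstandard criteria. The only points requiring care are (i) ensuring the intermediate images lie in $\mathrm{INF}\left(Y\right)$ before invoking $\varphi$'s criterion — this is where properness of $f$ is indispensable, and where the bornologous hypothesis alone would not suffice — and (ii) recalling that uniform continuity of $g$ has the expected nonstandard formulation in terms of the infinite-closeness relations $\approx_{Z}$ and $\approx_{W}$.
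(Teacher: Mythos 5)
Your proof is correct and follows essentially the same route as the paper's: invoke the nonstandard characterisations of bornologous (Theorem \ref{thm:nonst-characterisation-of-bornologous}), proper (Theorem \ref{thm:nonst-characterisations-of-bornological-and-proper}), slow oscillation (Theorem \ref{thm:nonst-characterisation-of-slow-oscillation}) and uniform continuity, and chain them on a pair $x\sim_{X}y$ of infinite points. The paper states the chain more tersely; your write-up merely makes explicit the point that properness is what keeps $\prescript{\ast}{}{f}(x),\prescript{\ast}{}{f}(y)$ infinite so that the criterion for $\varphi$ applies.
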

\begin{proof}
By \prettyref{thm:nonst-characterisations-of-bornological-and-proper}
and \prettyref{thm:nonst-characterisation-of-bornologous}, $f$ sends
$\sim_{X}$-pairs to $\sim_{Y}$-pairs, and maps $\mathrm{INF}\left(X\right)$
to $\mathrm{INF}\left(Y\right)$. By \prettyref{thm:nonst-characterisation-of-slow-oscillation},
$\varphi$ sends $\sim_{Y}$-pairs of infinite points to $\approx_{Z}$-pairs.
By the nonstandard characterisation of uniform continuity (cf. \cite[Theorem 8.4.23]{SL76}),
$g$ sends $\approx_{Z}$-pairs to $\approx_{W}$-pairs. Combining
them, $g\circ\varphi\circ f$ sends $\sim_{X}$-pairs of infinite
points to $\approx_{W}$-pairs. Finally, by \prettyref{thm:nonst-characterisation-of-slow-oscillation},
$g\circ\varphi\circ f$ is slowly oscillating.
\end{proof}
\begin{prop}[Standard]
A uniform limit of slowly oscillating maps is slowly oscillating.
\end{prop}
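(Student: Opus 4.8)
The plan is to verify condition (2) of \prettyref{thm:nonst-characterisation-of-slow-oscillation} for the limit map. Write $\varphi$ for the uniform limit of a family $\left(\varphi_{n}\right)$ of slowly oscillating maps $X\to Y$, so that for every $U\in\mathcal{U}_{Y}$ there is a standard index $n$ with $\left(\varphi_{n}\left(z\right),\varphi\left(z\right)\right)\in U$ for all $z\in X$. By \prettyref{thm:nonst-characterisation-of-slow-oscillation} it then suffices to show that whenever $x,y\in\mathrm{INF}\left(X\right)$ and $x\sim_{X}y$, one has $\prescript{\ast}{}{\varphi}\left(x\right)\approx_{Y}\prescript{\ast}{}{\varphi}\left(y\right)$.

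So I would fix such $x,y$ and an arbitrary $U\in\mathcal{U}_{Y}$, and choose a symmetric $V\in\mathcal{U}_{Y}$ with $V\circ V\circ V\subseteq U$. First, uniform convergence gives a \emph{standard} index $n$ with $\left(\varphi_{n}\left(z\right),\varphi\left(z\right)\right)\in V$ for all $z\in X$; transferring this, $\left(\prescript{\ast}{}{\varphi_{n}}\left(z\right),\prescript{\ast}{}{\varphi}\left(z\right)\right)\in\prescript{\ast}{}{V}$ for all $z\in\prescript{\ast}{}{X}$, in particular for $z=x$ and $z=y$. The point of keeping $n$ standard is that $\varphi_{n}$ is then a genuine slowly oscillating map, so \prettyref{thm:nonst-characterisation-of-slow-oscillation} applies to it directly: since $x,y\in\mathrm{INF}\left(X\right)$ and $x\sim_{X}y$, we get $\prescript{\ast}{}{\varphi_{n}}\left(x\right)\approx_{Y}\prescript{\ast}{}{\varphi_{n}}\left(y\right)$, hence $\left(\prescript{\ast}{}{\varphi_{n}}\left(x\right),\prescript{\ast}{}{\varphi_{n}}\left(y\right)\right)\in\prescript{\ast}{}{V}$. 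Chaining the three memberships and using symmetry of $V$,
\[
\left(\prescript{\ast}{}{\varphi}\left(x\right),\prescript{\ast}{}{\varphi}\left(y\right)\right)\in\prescript{\ast}{}{V}\circ\prescript{\ast}{}{V}\circ\prescript{\ast}{}{V}=\prescript{\ast}{}{\left(V\circ V\circ V\right)}\subseteq\prescript{\ast}{}{U}.
\]
As $U\in\mathcal{U}_{Y}$ was arbitrary, $\prescript{\ast}{}{\varphi}\left(x\right)\approx_{Y}\prescript{\ast}{}{\varphi}\left(y\right)$, and \prettyref{thm:nonst-characterisation-of-slow-oscillation} gives that $\varphi$ is slowly oscillating.

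This is just the familiar three-$\varepsilon$ argument transported into the nonstandard language, so I do not expect a serious obstacle. The one subtlety worth flagging is the choice of index: it must be a \emph{standard} natural number (depending on $U$), so that $\varphi_{n}$ is honestly slowly oscillating and \prettyref{thm:nonst-characterisation-of-slow-oscillation} can be invoked off the shelf. If one instead tried to route the argument through a single infinite index $\omega$ and the internal map $\prescript{\ast}{}{\varphi_{\omega}}$, one would have to re-prove the characterisation for that internal map, and the implication ``$x\in\mathrm{INF}\left(X\right)\Rightarrow x\notin\prescript{\ast}{}{B}$'' used in \prettyref{thm:nonst-characterisation-of-slow-oscillation} fails because an internal $\ast$-bounded set $B\in\prescript{\ast}{}{\mathcal{B}_{X}}$ may well contain infinite points. (If the limit is taken along a net rather than a sequence, ``standard index'' should be read as ``standard element of the directed index set'', and nothing else changes.)
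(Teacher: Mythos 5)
Your proof is correct, but it takes a genuinely different route from the paper's. The paper fixes the infinite points $x\sim_X y$ and applies Robinson's lemma to pass the approximate equality $\prescript{\ast}{}{\varphi_{\lambda}}\left(x\right)\approx_Y\prescript{\ast}{}{\varphi_{\lambda}}\left(y\right)$ (which holds for all standard $\lambda$ by \prettyref{thm:nonst-characterisation-of-slow-oscillation}) up to some infinite index $\lambda_0\in\Lambda_\infty$; it then uses the nonstandard characterisation of uniform convergence to get $\prescript{\ast}{}{\psi}\left(x\right)\approx_Y\prescript{\ast}{}{\varphi_{\lambda_0}}\left(x\right)\approx_Y\prescript{\ast}{}{\varphi_{\lambda_0}}\left(y\right)\approx_Y\prescript{\ast}{}{\psi}\left(y\right)$. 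The paper explicitly notes that this appeal to Robinson's lemma costs \emph{full} saturation. Your version sidesteps Robinson's lemma entirely: you fix an entourage $U$, pick a symmetric $V$ with $V\circ V\circ V\subseteq U$, choose a single \emph{standard} index from the definition of uniform convergence, transfer that approximation to $\prescript{\ast}{}{X}$, apply \prettyref{thm:nonst-characterisation-of-slow-oscillation} to the (standard) $\varphi_n$, and chain. This is a standard-index, three-entourage argument and needs only transfer plus the enlargement used in the approximation lemma underlying \prettyref{thm:nonst-characterisation-of-slow-oscillation}. In short, your proof is correct and is strictly more economical on the saturation hypothesis than the one in the paper. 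One small caveat about your closing remark: you correctly observe that one cannot simply reapply \prettyref{thm:nonst-characterisation-of-slow-oscillation} to an internal map $\prescript{\ast}{}{\varphi_{\omega}}$ for a single infinite $\omega$, but the paper does route through an infinite index --- it just does so via Robinson's lemma, which transfers the conclusion $\prescript{\ast}{}{\varphi_{\lambda}}\left(x\right)\approx_Y\prescript{\ast}{}{\varphi_{\lambda}}\left(y\right)$ directly rather than transferring the slowly oscillating property. So the infinite-index route is viable; it is simply more expensive.
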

\begin{proof}
Let $\set{\varphi_{\lambda}\colon X\to Y}_{\lambda\in\Lambda}$ be
a directed sequence of slowly oscillating maps. Suppose that $\set{\varphi_{\lambda}}_{\lambda\in\Lambda}$
is uniformly convergent to a map $\psi\colon X\to Y$. Let $x,y\in\mathrm{INF}\left(X\right)$
and suppose that $x\sim_{X}y$. By \prettyref{thm:nonst-characterisation-of-slow-oscillation},
$\prescript{\ast}{}{\varphi_{\lambda}}\left(x\right)\approx_{Y}\prescript{\ast}{}{\varphi_{\lambda}}\left(y\right)$
holds for any $\lambda\in\Lambda$. Applying Robinson's lemma (cf.
\cite[Theorem 4.3.10]{Rob66} for the metrisable case), we can find
an $\lambda_{0}\in\Lambda_{\infty}$ such that $\prescript{\ast}{}{\varphi_{\lambda_{0}}}\left(x\right)\approx_{Y}\prescript{\ast}{}{\varphi_{\lambda_{0}}}\left(y\right)$,
where $\Lambda_{\infty}=\set{\lambda\in\prescript{\ast}{}{\Lambda}|\Lambda\leq\lambda}$.
By the nonstandard characterisation of uniform convergence (cf. \cite[Theorem 4.6.1]{Rob66}
for the metrisable case), $\prescript{\ast}{}{\psi}\left(x\right)\approx_{Y}\prescript{\ast}{}{\varphi_{\lambda_{0}}}\left(x\right)\approx_{Y}\prescript{\ast}{}{\varphi_{\lambda_{0}}}\left(y\right)\approx_{Y}\prescript{\ast}{}{\psi}\left(y\right)$.
Again by \prettyref{thm:nonst-characterisation-of-slow-oscillation},
$\psi$ is slowly oscillating.
\end{proof}
Note that this proof depends on the \emph{full} saturation principle,
because so does Robinson's lemma.
\begin{defn}[Standard]
Let $X$ be a connected coarse space equipped with a topology. A
$\mathbb{C}$-valued, bounded, slowly oscillating, continuous function
on $X$ is called a \emph{Higson function}.
\end{defn}
\begin{prop}[{Standard; Roe \cite[Lemma 5.3]{Roe93}}]
The class $C_{h}\left(X\right)$ of Higson functions forms a $C^{\ast}$-algebra
with respect to to the pointwise operations.
\end{prop}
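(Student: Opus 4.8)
The plan is to exhibit $C_{h}\left(X\right)$ as a norm-closed $*$-subalgebra of the commutative $C^{\ast}$-algebra $C_{b}\left(X\right)$ of all bounded continuous $\mathbb{C}$-valued functions on $X$, with the pointwise algebra operations, complex conjugation as involution, and the supremum norm. Since every norm-closed $*$-subalgebra of a $C^{\ast}$-algebra is again a $C^{\ast}$-algebra (the $C^{\ast}$-identity $\left\Vert f^{*}f\right\Vert =\left\Vert f\right\Vert ^{2}$ is inherited), it suffices to verify two things: first, that $C_{h}\left(X\right)$ is stable under addition, scalar multiplication, pointwise product and complex conjugation; second, that $C_{h}\left(X\right)$ is closed in the supremum norm. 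Boundedness and continuity are preserved by all these operations by classical arguments, so the only nonclassical content is that slow oscillation is preserved, and that a uniform limit of Higson functions is slowly oscillating.

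The second point is already available: the proposition proved above states that a uniform limit of slowly oscillating maps is slowly oscillating. Hence if a sequence in $C_{h}\left(X\right)$ converges uniformly to some $f\in C_{b}\left(X\right)$, then $f$ is slowly oscillating, so $f\in C_{h}\left(X\right)$; thus $C_{h}\left(X\right)$ is norm-closed, hence a Banach space.

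For the algebra operations I would argue nonstandardly via \prettyref{thm:nonst-characterisation-of-slow-oscillation}: a map $\varphi\colon X\to\mathbb{C}$ is slowly oscillating iff $\prescript{\ast}{}{\varphi}\left(x\right)\approx_{\mathbb{C}}\prescript{\ast}{}{\varphi}\left(y\right)$ whenever $x,y\in\mathrm{INF}\left(X\right)$ and $x\sim_{X}y$, where $z\approx_{\mathbb{C}}w$ means $\left|z-w\right|$ is infinitesimal. The key observation is that $\approx_{\mathbb{C}}$, restricted to the finite elements of $\prescript{\ast}{}{\mathbb{C}}$, is a congruence for the field operations: sums, complex conjugates and standard scalar multiples of infinitesimals are infinitesimal, and any finite multiple of an infinitesimal is infinitesimal. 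So, given Higson functions $\varphi,\psi$ and any $x,y\in\mathrm{INF}\left(X\right)$ with $x\sim_{X}y$, from $\prescript{\ast}{}{\varphi}\left(x\right)\approx_{\mathbb{C}}\prescript{\ast}{}{\varphi}\left(y\right)$ and $\prescript{\ast}{}{\psi}\left(x\right)\approx_{\mathbb{C}}\prescript{\ast}{}{\psi}\left(y\right)$ we immediately get $\prescript{\ast}{}{\left(\varphi+\psi\right)}\left(x\right)\approx_{\mathbb{C}}\prescript{\ast}{}{\left(\varphi+\psi\right)}\left(y\right)$, and similarly for $\lambda\varphi$ and $\overline{\varphi}$. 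For the product this is where \emph{boundedness} of Higson functions is essential: since $\varphi,\psi$ are bounded, $\prescript{\ast}{}{\varphi}\left(x\right)$ and $\prescript{\ast}{}{\psi}\left(y\right)$ are finite, and from
\[
\prescript{\ast}{}{\varphi}\left(x\right)\prescript{\ast}{}{\psi}\left(x\right)-\prescript{\ast}{}{\varphi}\left(y\right)\prescript{\ast}{}{\psi}\left(y\right)=\prescript{\ast}{}{\varphi}\left(x\right)\bigl(\prescript{\ast}{}{\psi}\left(x\right)-\prescript{\ast}{}{\psi}\left(y\right)\bigr)+\prescript{\ast}{}{\psi}\left(y\right)\bigl(\prescript{\ast}{}{\varphi}\left(x\right)-\prescript{\ast}{}{\varphi}\left(y\right)\bigr)
\]
the right-hand side is a sum of (finite)$\times$(infinitesimal) terms, hence infinitesimal, so $\varphi\psi$ is slowly oscillating by \prettyref{thm:nonst-characterisation-of-slow-oscillation}.

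Putting these together, $C_{h}\left(X\right)$ is a nonempty (it contains the constants, which are trivially slowly oscillating), norm-closed $*$-subalgebra of $C_{b}\left(X\right)$, hence a $C^{\ast}$-algebra. I do not expect a genuine obstacle; the one place needing care is the product, where one must invoke boundedness to keep all relevant nonstandard values finite — otherwise the congruence property of $\approx_{\mathbb{C}}$ would fail, since an infinitesimal times an infinite quantity need not be infinitesimal.
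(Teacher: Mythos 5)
Your proposal is correct and takes essentially the same approach as the paper: both reduce the problem to norm-closedness (handled by the preceding proposition on uniform limits) plus closure of slow oscillation under the pointwise $*$-algebra operations, and both verify the crucial product case via \prettyref{thm:nonst-characterisation-of-slow-oscillation} together with the law that a finite quantity times an infinitesimal is infinitesimal, with boundedness of Higson functions supplying the required finiteness. Your telescoping decomposition $\prescript{\ast}{}{\varphi}(x)\prescript{\ast}{}{\psi}(x)-\prescript{\ast}{}{\varphi}(y)\prescript{\ast}{}{\psi}(y)=\prescript{\ast}{}{\varphi}(x)\bigl(\prescript{\ast}{}{\psi}(x)-\prescript{\ast}{}{\psi}(y)\bigr)+\prescript{\ast}{}{\psi}(y)\bigl(\prescript{\ast}{}{\varphi}(x)-\prescript{\ast}{}{\varphi}(y)\bigr)$ is just an unrolled form of the paper's chain $\prescript{\ast}{}{\varphi}(x)\prescript{\ast}{}{\psi}(x)\approx_{\mathbb{C}}\prescript{\ast}{}{\varphi}(x)\prescript{\ast}{}{\psi}(y)\approx_{\mathbb{C}}\prescript{\ast}{}{\varphi}(y)\prescript{\ast}{}{\psi}(y)$, so the two arguments coincide.
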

\begin{proof}
It has been proved that $C_{h}\left(X\right)$ is complete with respect
to the supremum norm. What we have to prove is that $C_{h}\left(X\right)$
is closed under the pointwise operations of $\ast$-algebra. Here
let us only prove that a product of two Higson functions is again
a Higson function. Let $\varphi,\psi\colon X\to\mathbb{C}$ be Higson
functions. Obviously $\varphi\psi$ is bounded and continuous. Now,
let $x,y\in\mathrm{INF}\left(X\right)$ and suppose that $x\sim_{X}y$.
Since $\varphi$ and $\psi$ are both bounded, $\prescript{\ast}{}{\varphi}\left(x\right)$
and $\prescript{\ast}{}{\psi}\left(y\right)$ are both finite. By
\prettyref{thm:nonst-characterisation-of-slow-oscillation},
\begin{align*}
\prescript{\ast}{}{\left(\varphi\psi\right)}\left(x\right) & =\prescript{\ast}{}{\varphi}\left(x\right)\prescript{\ast}{}{\psi}\left(x\right)\\
 & \approx_{\mathbb{C}}\prescript{\ast}{}{\varphi}\left(x\right)\prescript{\ast}{}{\psi}\left(y\right)\\
 & \approx_{\mathbb{C}}\prescript{\ast}{}{\varphi}\left(y\right)\prescript{\ast}{}{\psi}\left(y\right)\\
 & =\prescript{\ast}{}{\left(\varphi\psi\right)}\left(y\right).
\end{align*}
Note that we have used the algebraic law $\text{infinitesimal}\times\text{finite}=\text{infinitesimal}$
in the second and the third $\approx_{\mathbb{C}}$s. Again by \prettyref{thm:nonst-characterisation-of-slow-oscillation},
$\varphi\psi$ is slowly oscillating. Hence $\varphi\psi$ is a Higson
function.
\end{proof}

\addcontentsline{toc}{section}{References}
\bibliographystyle{elsarticle-num}
\bibliography{bibliography}

\section*{Corrigendum}
\addcontentsline{toc}{section}{Corrigendum}
In \prettyref{prop:equibounded-family-of-linear-maps}, the linearity of (the elements of) $\mathcal{F}$ must be assumed.

\end{document}